\begin{document}
    \title[Quasi-treeings are treeable]{Quasi-treeings are treeable: a streamlined proof}
    \author{Zhaoshen Zhai}
    \address{Department of Mathematics and Statistics, McGill University, 805 Sherbrooke Street West, Montreal, QC, H3A 0B9, Canada}
    \email{zhaoshen.zhai@mail.mcgill.ca}
    \thanks{This work was partially supported by McGill University's SURA (Science Undergraduate Research Award) grant.}
    \date{\today}
    \subjclass[2020]{03E15, 20F65, 20E08, 37A20}
    \keywords{treeable, countable Borel equivalence relation, quasi-tree, ends, cuts, median graph, pocset}

    \begin{abstract}
        We present a streamlined exposition of a construction by R. Chen, A. Poulin, R. Tao, and A. Tserunyan, which proves the treeability of equivalence relations generated by any locally-finite Borel graph such that each component is a quasi-tree. More generally, we show that if each component of a locally-finite Borel graph admits a \textit{finitely-separating Borel family of cuts}, then we may `canonically' replace each component of the graph by a tree of special ultrafilter-like objects on cuts called \textit{orientations}; moreover, if the cuts are \textit{dense towards ends}, then the union of these trees is a Borel treeing.
    \end{abstract}

    \maketitle

    The purpose of this note is to provide a streamlined proof of the main result in \cite{CPTT23} in order to better understand the general formalism developed therein. We attempt to make this note relatively self-contained, but nevertheless, we urge the reader to refer to the original paper for more detailed background/discussions and some generalizations of the results we have selected to include here.

    \subsection*{Treeings of equivalence relations}

    A \textit{countable Borel equivalence relation (CBER)} on a standard Borel space $X$ is a Borel equivalence relation $E\subseteq X^2$ with each class countable. We are interested in special types of \textit{graphings} of a CBER $E\subseteq X^2$, i.e. a Borel graph $G\subseteq X^2$ whose connectedness relation is precisely $E$. For instance, a graphing of $E$ such that each component is a tree is called a \textit{treeing} of $E$, and the CBERs that admit treeings are said to be \textit{treeable}. The main results of \cite{CPTT23} provide new sufficient criteria for treeability of certain classes of CBERs, and in particular, they prove the following

    \begin{mainTheorem}[Section \ref{sec:borel_treeings_of_graphings_with_dense_cuts}, \cite{CPTT23}*{Theorem 1.1}]\label{thm:treeing_quasi-trees}
        If a CBER $E$ admits a locally-finite graphing such that each component is a quasi-tree,\footnote{Recall that metric spaces $X$ and $Y$ are \textit{quasi-isometric} if they are isometric up to a bounded multiplicative and additive error, and $X$ is a \textit{quasi-tree} if it is quasi-isometric to a simplicial tree; see \cite{Gro93} and \cite{DK18}.} then $E$ is treeable.
    \end{mainTheorem}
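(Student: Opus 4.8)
The plan is to derive the theorem, following the roadmap in the abstract, from the general machinery developed for this purpose: it takes a locally-finite Borel graph each of whose components carries a \emph{finitely-separating Borel family of cuts that is dense towards ends}, replaces each component by the tree of its orientations, and --- using density towards ends --- glues these trees into a Borel treeing of the resulting CBER. So, given a locally-finite graphing $G$ of $E$ whose components are quasi-trees, the entire task reduces to producing such a family of cuts in each component, Borel-uniformly across all of them.

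The first move is to trade the hypothesis ``quasi-tree'' for an intrinsic one: by Manning's bottleneck criterion a geodesic graph is a quasi-tree exactly when it has the bottleneck property, and local finiteness of $G$ lets one take the bottleneck constant uniform over all components. Inside a fixed component $C$, the cuts to use are the \emph{bottleneck cuts}: for vertices $x, y \in C$ far enough apart, pick a geodesic $[x,y]$ with midpoint $m$ and take the partition of $V(C)$ into its ``$x$-half'' and ``$y$-half''; the bottleneck property confines the edge-boundary of this partition to a bounded ball about $m$, so by local finiteness it is finite and the partition is a genuine cut of $C$. Heuristically these are precisely the pullbacks of the edge-cuts of a simplicial tree $T$ along a quasi-isometry $C \to T$ --- pullbacks of the two half-trees of an edge are automatically nested and, by properness, have finite boundary --- which is the geometry behind the verifications below; but the point of phrasing the family intrinsically is that it can then be chosen Borel. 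Indeed, a cut is coded by its finite boundary together with a choice of side, so the set of all cuts of all components of $G$ is a standard Borel space, and the bottleneck cuts form a Borel subfamily.

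The three required properties all come from the bottleneck geometry. The family is \emph{finitely-separating}: distinct vertices $x, y$ of $C$ are far apart at some scale and hence separated by a bottleneck cut, while any bottleneck cut separating them has its boundary within a bounded distance of $[x,y]$, so --- there being only about $d(x,y)$ candidate locations, with local finiteness bounding the number of cuts near each --- only finitely many cuts of the family separate them. It is \emph{nested}: two crossing cuts would force a grid-like region in $C$, contradicting the bottleneck property, so the median graph of orientations is an actual tree. And it is \emph{dense towards ends}: along a ray converging to an end $\xi$ of $C$, the bottleneck cuts built from pairs of ever-further-apart points of the ray form a nested sequence shrinking to $\xi$, so every end is approached by the family.

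I expect the real difficulty to be the uniformity and Borelness, which is exactly why the intrinsic description is worth the trouble: one must choose the geodesics and midpoints defining the bottleneck cuts Borel-measurably and simultaneously in all components (neither is unique in a quasi-tree), prune the family so that it is genuinely Borel while retaining separation, finite-separation, nestedness, and density towards ends, and --- the most delicate point --- verify that, thanks to density towards ends, the space of orientations of this pocset is no bigger than the principal orientations coming from vertices of $C$ together with the ends of $C$, so that the union of the orientation trees is a treeing of $E$ itself and not of some coarsening. Granting all of this, the theorem is immediate from the general construction.
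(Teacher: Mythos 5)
Your roadmap is the right one and matches the paper's: find a finitely-separating, dense-towards-ends Borel family of cuts in each component and feed it into the orientation machinery. But the intermediate claims you make to carry this out do not hold, and one of them is load-bearing.

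The load-bearing gap is nestedness. You claim the bottleneck cuts are nested, so that the median graph of orientations is already a tree. That is false: cuts with bounded boundary diameter in a quasi-tree do cross each other --- already in a simplicial tree, as soon as one allows boundary diameter greater than $1$, crossing cuts appear, and pulling back tree edge-cuts along a quasi-isometry and then repairing connectedness (Lemma \ref{lem:connected_witness_to_density}) destroys whatever nestedness one started with, so your ``grid-like region'' heuristic does not rule crossings out. What actually holds is only that each cut is non-nested with \emph{finitely-many} others (Lemma \ref{lem:connected_cuts_non_nested_finitely_others}); this produces a median graph $\mc{M}(\mc{H})$ with finite hyperplanes (Proposition \ref{prp:dual_median_graph_of_cuts_has_finite_hyperplanes}), \emph{not} a tree, and one must then run a Borel cycle-cutting algorithm (Proposition \ref{prp:canonical_spanning_trees}) to extract a canonical spanning tree. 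Your proposal elides this entire layer, and the theorem does not go through without it. The second gap is your assertion that local finiteness gives a uniform bottleneck constant across components: it does not --- a locally-finite Borel quasi-treeing can have components with bottleneck or quasi-isometry constants tending to infinity. The diameter bounds $R_C$ cannot be made uniform and need not be; the paper instead proves a Borel characterization of density towards ends (Proposition \ref{prp:dense_cuts_iff_proper_walling}), so that $R_C$ can be chosen Borel-measurably per component as the least bound making the characterization hold, and then uses density (Lemma \ref{lem:dual_median_graph_of_dense_cuts_locally_finite}) to make each $\mc{M}(\mc{H}(C))$ locally-finite, which is what allows $\bigsqcup_C\mc{U}^\circ(\mc{H}(C))$ to be coded as a standard Borel space. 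The final transfer of treeability is via the principal-orientations map $x\mapsto\widehat x$ being a Borel bireduction, not the identification of clopen orientations with ``vertices plus ends'' that you sketch --- ends give non-clopen orientations and are not points of $\mc{U}^\circ(\mc{H})$ at all.
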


    Roughly speaking, the existence of a quasi-isometry $G|C\to T_C$ to a simplicial tree $T_C$ for each component $C\subseteq X$ implies the existence of a collection $\mc{H}(C)\subseteq2^C$ of `cuts' (subsets $H\subseteq C$ with finite boundary such that both $H$ and $C\comp H$ are connected), which are `tree-like' in the sense that
    \begin{enumerate}
        \item[(i)] $\mc{H}(C)$ is \textit{finitely-separating}: each pair $x,y\in C$ is separated by finitely-many $H\in\mc{H}(C)$, and
        \item[(ii)] $\mc{H}(C)$ is \textit{dense towards ends}: $\mc{H}(C)$ contains a neighborhood basis for each end in $G|C$.
    \end{enumerate}
    Condition (i) allows for an abstract construction of a tree $\mc{U}^\circ(\mc{H}(C))$ whose vertices are special `ultrafilters'\footnote{As in \cite{CPTT23}, we call them \textit{orientations} instead, to avoid confusion with the more standard notion; see Definition \ref{def:orientation}.} on $\mc{H}(C)$, as outlined in the following diagram: starting from a finitely-separating family of cuts, one constructs a `dual median graph' $\mc{M}(\mc{H}(C))$ with said ultrafilters; this median graph has finite `hyperplanes', which allows one to apply a Borel cycle-cutting algorithm and obtain a `canonical' spanning tree thereof.

    \vspace{0.05in}
    \begin{center}
        \def\hsp{3.75}
        \def\vsp{1.3}
        \def\vsh{0.05}
        \begin{tikzpicture}
            \draw[dashed, rounded corners, opacity=0.2] (-1.7,0.5) -- (-1.7,-0.4-\vsp) -- (0.75,-0.4-\vsp) -- (3,-0.4) -- (3*\hsp+1.5,-0.4) -- (3*\hsp+1.5,0.5) -- cycle;
            \draw[dashed, rounded corners, opacity=0.2] (2.5,0.5-\vsp) rectangle (3*\hsp+1.5,-0.4-\vsp);

            \draw (3*\hsp+2.25,\vsh) circle (0in) node[opacity=0.7]{\tiny{Thm.\!\! \ref{thm:treeing_quasi-trees}}};
            \draw (3*\hsp+2.25,\vsh-\vsp) circle (0in) node[opacity=0.7]{\tiny{Thm.\!\! \ref{thm:component-wise_construction}}};

            \draw (0,\vsh) circle (0in) node{Borel quasi-treeing};
            \draw[->] (0,-0.2) -- (0,-\vsp+0.3);
            \draw (\hsp,0) circle (0in) node{$\begin{gathered} \textrm{Dense family}\\[-4pt] \textrm{of cuts} \end{gathered}$};
            \draw[->] (\hsp+1.1,\vsh) -- (3*\hsp-1.1,\vsh) node[xshift=-4.5cm, above=-2pt]{\tiny{Lem. \ref{lem:dual_median_graph_of_dense_cuts_locally_finite}}} node[xshift=-1.5cm, above=-2pt]{\tiny{Section \ref{sec:borel_treeings_of_graphings_with_dense_cuts}}} node[xshift=-1.5cm, below=-2pt]{\tiny{(Theorem \ref{thm:treeing_graphings_with_dense_cuts})}};
            \draw (3*\hsp,\vsh) circle (0in) node{Borel treeing};

            \draw (0,\vsh-\vsp) circle (0in) node{Quasi-tree};
            \draw[->] (1,\vsh-\vsp) -- (\hsp-1.1,\vsh-\vsp) node[midway, above=-2pt]{\tiny{Sec.\!\! \ref{sec:pocsets_of_cuts}\,+\,\ref{sec:graphs_with_dense_families_of_cuts}}};
            \draw[->] (1,-1) -- (\hsp-1.1,0) node[midway, above=-2pt, sloped]{\tiny{Sec.\!\! \ref{sec:ends_of_graphs}\,+\,\ref{sec:graphs_with_dense_families_of_cuts}}};
            \draw (\hsp,-\vsp) circle (0in) node{$\begin{gathered} \textrm{Finitely-sep.}\\[-4pt] \textrm{family of cuts} \end{gathered}$};
            \draw[->] (\hsp+1.1,\vsh-\vsp) -- (2*\hsp-1.5,\vsh-\vsp) node[midway, above=-2pt]{\tiny{Sec.\!\! \ref{sec:the_dual_median_graph_of_a_pocset}}};
            \draw[->-=0.5] (\hsp+1.1,-1) to [out=35, in=180] (2*\hsp,\vsh);
            \node[rotate=32] at (1.43*\hsp,-0.5) {\tiny{Lem.\!\! \ref{lem:dual_median_graph_of_dense_cuts_locally_finite}}};
            \draw (\hsp,-0.6) circle (0in);
            \draw (2*\hsp,-\vsp) circle (0in) node{$\begin{gathered} \textrm{Median graph w/}\\[-4pt] \textrm{finite hyperplanes} \end{gathered}$};
            \draw[->] (2*\hsp+1.5,\vsh-\vsp) -- (3*\hsp-1.1,\vsh-\vsp) node[midway, above=-2pt]{\tiny{Sec.\!\! \ref{sec:cycle_cutting_algorithm}}};
            \draw (3*\hsp,-\vsp) circle (0in) node{$\begin{gathered} \textrm{`Canonical'}\\[-4pt] \textrm{spanning tree} \end{gathered}$};
            \draw[->] (3*\hsp,0.4-\vsp) -- (3*\hsp,-0.2);
        \end{tikzpicture}
    \end{center}
    \vspace{-0.05in}
    Thus we have the following theorem, which can be viewed as a component-wise version of Theorem \ref{thm:treeing_quasi-trees}.

    \begin{mainTheorem}[Propositions \ref{prp:construction_of_dual_median_graph}, \ref{prp:dual_median_graph_of_cuts_has_finite_hyperplanes}, \ref{prp:canonical_spanning_trees}]\label{thm:component-wise_construction}
        For any finitely-separating family of cuts $\mc{H}$ on a connected locally-finite graph, its dual median graph $\mc{M}(\mc{H})$ has finite hyperplanes, and fixing a proper colouring of the intersection graph of those hyperplanes yields a canonical spanning tree $\mc{U}^\circ(\mc{H})$ of $\mc{M}(\mc{H})$.
    \end{mainTheorem}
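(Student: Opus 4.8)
The plan is to establish the theorem's three assertions in turn — they are exactly Propositions~\ref{prp:construction_of_dual_median_graph}, \ref{prp:dual_median_graph_of_cuts_has_finite_hyperplanes}, and~\ref{prp:canonical_spanning_trees} — each drawing on a different feature of $\mc{H}$. First I would construct the dual median graph $\mc{M}(\mc{H})$ and check that it is a connected median graph. Closing $\mc{H}$ under the complementation $H \mapsto H^* := C \comp H$ makes it a pocset under $\subseteq$, and I take the vertices of $\mc{M}(\mc{H})$ to be the \emph{orientations} on $\mc{H}$ lying at finite distance from a principal one: transversals $\sigma$ of the complementary pairs $\{H,H^*\}$ that are upward closed under $\subseteq$, with an edge $\sigma \sim \tau$ precisely when $\sigma \symdiff \tau = \{H,H^*\}$ for a single pair. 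The principal orientation at $x \in C$ is $\hat{x} := \{H \in \mc{H} : x \in H\}$, and it is the finitely-separating hypothesis that makes $d(\hat{x},\hat{y}) = |\{H \in \mc{H} : x \in H,\ y \notin H\}|$ finite and, more generally, makes every vertex reachable from a principal orientation by successively flipping $\subseteq$-minimal elements of a symmetric difference — giving connectivity. The median property I would settle by the standard Sageev/Chepoi-style argument: the ``majority'' orientation $m(\sigma,\tau,\rho)$, containing $H$ iff at least two of $\sigma,\tau,\rho$ do, is again a valid orientation and realizes the graph-metric median of $\sigma,\tau,\rho$.

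Next I would bound the hyperplanes. The hyperplane $\hat{H}$ dual to a pair $\{H,H^*\}$ is the set of edges $\sigma \sim \tau$ with $\sigma \symdiff \tau = \{H,H^*\}$, and the point is that such an edge is pinned down by finite data localized near the finite boundary $\partial H$: the endpoint containing $H$ must be an orientation that is ``tight around $H$'', and local finiteness of the graph together with $|\partial H| < \infty$ forces only finitely many cuts to sit immediately nested outside $H$, hence only finitely many such tight orientations; counting the orientations of this finite sub-pocset then bounds $|\hat{H}|$.

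Finally I would build the canonical spanning tree. Call hyperplanes $\hat{H}, \hat{K}$ \emph{crossing} when the four quadrants $H \cap K$, $H \cap K^*$, $H^* \cap K$, $H^* \cap K^*$ are all nonempty, and form the intersection graph $\mc{I}(\mc{H})$ on this relation; then fix any proper colouring $c$ of it. Using that every cycle in a median graph is a ``sum'' of the $4$-cycles cut out by crossing pairs of hyperplanes, I would run a colouring-guided cycle-cutting algorithm: remove from each such square a canonically-chosen edge, the choice dictated by comparing the colours of the two hyperplanes involved, and then verify that the surviving subgraph $\mc{U}^\circ(\mc{H})$ remains connected — no removed edge is a bridge — and becomes acyclic — every square-sum is broken. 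Since the entire recipe refers only to $\mc{H}$ and $c$ and is equivariant under pocset isomorphisms, the resulting spanning tree $\mc{U}^\circ(\mc{H})$ is ``canonical'' in exactly the sense needed to run the construction Borel-ly, component by component, in the later sections.

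The step I expect to be the main obstacle is the cycle-cutting: the delicate point is making the colour-driven deletions \emph{globally} coherent, so that cutting all squares simultaneously destroys every cycle yet never disconnects $\mc{M}(\mc{H})$ — this requires a careful analysis of how arbitrary cycles decompose into squares and a local-confluence check on the choices. The finite-hyperplane bound is the next most subtle point and is precisely where local finiteness of the graph enters; connectivity of $\mc{M}(\mc{H})$ is where the finitely-separating hypothesis is used essentially.
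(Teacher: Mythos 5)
Your first step — constructing $\mc{M}(\mc{H})$ on orientations and verifying the median property via the majority formula — matches the paper's Proposition~\ref{prp:construction_of_dual_median_graph} essentially verbatim; the vertex set (orientations at finite distance from a principal one) coincides with the clopen orientations once one observes that $U\symdiff V$ is a compact set of isolated points for $U,V$ clopen.

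For the finite-hyperplane bound you gesture at the right intuition, namely that edges in a hyperplane are governed by finite data near $\del_\mathsf{v}H$, but the precise mechanism in the paper is different from ``finitely many cuts immediately nested outside $H$''. The paper shows (Proposition~\ref{prp:dual_median_graph_of_cuts_has_finite_hyperplanes}) that any two edges in the hyperplane $\del_\mathsf{ie}\cone_V(U)$ are joined by a strip of squares, and a square flips a half-space \emph{non-nested} with $H$; finiteness then follows from Lemma~\ref{lem:connected_cuts_non_nested_finitely_others}, which in turn combines connectedness of cuts, finite boundary, and finite separation. Nestedness is not the operative notion here — non-nestedness is — and without Lemma~\ref{lem:connected_cuts_non_nested_finitely_others} the count you sketch doesn't close.

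The genuine divergence, and where there is a real gap, is the spanning tree. You propose a top-down cycle-cutting: for each crossing square, delete one edge chosen by comparing colours, then argue the residue is a spanning tree. You correctly identify the obstacle — global coherence — but this is not merely delicate, it is the whole problem: a hyperplane $\del_\mathsf{ie}H$ with $k>2$ edges sits in several squares against different $\hat{K}$'s, and square-by-square deletion gives no mechanism to guarantee that exactly $k-1$ edges of $\hat{H}$ are removed, nor that the surviving edge is consistently chosen, nor that deleting across hyperplanes never disconnects. The paper's Proposition~\ref{prp:canonical_spanning_trees} inverts the construction to a bottom-up forest build: colour $\mc{H}^\ast_\mathrm{cvx}$ into nested subpocsets $\mc{H}_n$, set $\mc{K}_n=\bigcup_{m\geq n}\mc{H}_m$, and grow forests $T_n$ whose components are exactly the $\mc{K}_n$-blocks, starting from $T_0=\em$. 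Passing from $T_n$ to $T_{n+1}$, each $\mc{K}_{n+1}$-block is partitioned into $\mc{H}_n$-blocks, which by nestedness of $\mc{H}_n$ and Corollary~\ref{cor:nested_implies_tree} form a \emph{tree} under $G$-adjacency; picking one edge from each finite hyperplane $\del_\mathsf{ie}H$, $H\in\mc{H}_n$, then joins these blocks into a forest whose components are precisely the $\mc{K}_{n+1}$-blocks, with acyclicity and connectivity maintained by construction at every stage. This layering by colour is exactly what makes the choices globally coherent; your approach would need an equivalent organizing principle to succeed.
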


    In the context of Theorem \ref{thm:treeing_quasi-trees}, for each component $C\subseteq G$ of a locally-finite quasi-treeing of $E$, the collection $\mc{H}(C)$ are cuts with bounded boundary diameter, and choosing a bound so that $\mc{H}(C)$ satisfies (i) and (ii) is a Borel operation since the \textit{a priori} coanalytic condition (ii) is, by Proposition \ref{prp:dense_cuts_iff_proper_walling}, equivalent to a Borel one. These conditions show that each $\mc{M}(\mc{H}(C))$ is locally-finite, so that $\bigsqcup_C\mc{U}^\circ(\mc{H}(C))$ can be encoded as a standard Borel space. This, combined with Theorem \ref{thm:component-wise_construction}, proves Theorem \ref{thm:treeing_quasi-trees}; see Section \ref{sec:borel_treeings_of_graphings_with_dense_cuts} for details.

    \subsection*{Acknowledgements}

    I would like to thank Professor Anush Tserunyan for supervising me for this project, for her consistent support, patience, and feedback, and for guiding me through this fulfilling research experience. I also thank the original authors of \cite{CPTT23} for their incredible work and detailed writing.

    \section{Preliminaries on Pocsets, Ends of Graphs, and Median Graphs}\label{sec:preliminaries}
    \renewcommand{\-}{\textrm{---}}

    \begin{notation*}
        A \textit{graph} on a set $X$ is a symmetric irreflexive binary relation $G\subseteq X^2$. For $A\subseteq X$, we say that $A$ is \textit{connected} if the induced subgraph $G[A]$ is. We always equip connected graphs with their \textit{path metric} $d$, and let $\Ball_r(x)$ be the closed ball of radius $r$ around $x$; more generally, we let $\Ball_r(A)\coloneqq\bigcup_{x\in A}\Ball_r(x)$.

        For a subset $A\subseteq X$, we let $\del_\mathsf{iv}A\coloneqq A\cap\Ball_1(\lnot A)$ be its \textit{inner vertex boundary}, $\del_\mathsf{ov}A\coloneqq\del_\mathsf{iv}(\lnot A)$ be its \textit{outer vertex boundary}, and let $\del_\mathsf{ie}A\coloneqq G\cap(\del_\mathsf{ov}A\times\del_\mathsf{iv}A)$ and $\del_\mathsf{oe}A\coloneqq\del_\mathsf{ie}(\lnot A)$ respectively be its \textit{inward} and \textit{outward edge boundaries}. Let $\del_\mathsf{v}A\coloneqq\del_\mathsf{iv}A\sqcup\del_\mathsf{ov}A$ be the \textit{(total) vertex boundary} of $A$.

        Finally, for $x,y\in X$, the \textit{interval} $[x,y]$ between $x,y$ is the union of all geodesics between $x,y$, consisting of exactly those $z\in X$ with $d(x,z)+d(z,y)=d(x,y)$. We say that $A\subseteq X$ is \textit{convex} if $[x,y]\subseteq A$ for all $x,y\in A$. For vertices $x,y,z\in X$, we write $x\-y\-z$ for $y\in[x,z]$. For all $w,x,y,z\in X$, observe that
        \begin{equation*}
            (w\-x\-y\textrm{ and }w\-y\-z)\ \ \ \ \Leftrightarrow\ \ \ \ (w\-x\-z\textrm{ and }x\-y\-z),
        \end{equation*}
        and both sides occur iff there is a geodesic from $w$ to $x$ to $y$ to $z$, which we write as $w\-x\-y\-z$.
    \end{notation*}

    \subsection{Profinite pocsets of cuts}\label{sec:pocsets_of_cuts}

    In the context of Theorem \ref{thm:treeing_quasi-trees}, the construction starts by identifying a profinite pocset $\mc{H}$ of `cuts' in each component of the graphing, which we first study abstractly. The finitely-separating subpocsets of $2^X$ are well-known in metric geometry as \textit{wallspaces}; see, e.g., \cite{Nic04} and \cite{CN05}.

    \begin{definition}\label{def:profinite_pocset}
        A \textit{pocset} $(\mc{H},\leq,\lnot,0)$ is a poset $(\mc{H},\leq)$ equipped with an order-reversing involution $\lnot:\mc{H}\to\mc{H}$ and a least element $0\neq\lnot0$ such that $0$ is the only lower-bound of $H,\lnot H$ for every $H\in\mc{H}$. We call the elements in $\mc{H}$ \textit{half-spaces}.

        A \textit{profinite pocset} is a pocset $\mc{H}$ equipped with a compact topology making $\lnot$ continuous and is \textit{totally order-disconnected}, in the sense that if $H\not\leq K$, then there is a clopen upward-closed $U\subseteq\mc{H}$ with $H\in U\not\ni K$.
    \end{definition}

    We are primarily interested in subpocsets of $(2^X,\subseteq,\lnot,\em)$ for a fixed set $X$, where $\lnot A\coloneqq A^c$ for $A\subseteq X$, which is profinite if equipped with the product topology of the discrete space $2$.

    \begin{remark*}
        We follow \cite{CPTT23}*{Convention 2.7}, where for a family $\mc{H}\subseteq2^X$ of subsets of a fixed set $X$, we write $\mc{H}^\ast\coloneqq\mc{H}\comp\l\{\em,X\r\}$ for the \textit{non-trivial} elements of $\mc{H}$.
    \end{remark*}

    The following proposition gives a sufficient criterion for subpocsets of $2^X$ to be profinite. We also show in this case that every non-trivial element $H\in\mc{H}^\ast$ is isolated, which will be important in Section \ref{sec:the_dual_median_graph_of_a_pocset}.

    \begin{lemma}\label{lem:finitely-separating_non-trivial_isolated}
        If $\mc{H}\subseteq2^X$ is a finitely-separating pocset, then $\mc{H}$ is closed and non-trivial elements are isolated.
    \end{lemma}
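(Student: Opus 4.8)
The plan is to prove the two assertions — that $\mc{H}$ is closed in $2^X$, and that each $H \in \mc{H}^\ast$ is isolated — more or less in tandem, since both hinge on the same combinatorial fact: a finitely-separating family cannot contain arbitrarily ``large'' half-spaces approximating a fixed target in the product topology without that target already being witnessed by some specific half-space. Recall that the product topology on $2^X$ has basic clopen sets determined by finitely many coordinates, i.e., by prescribing membership/non-membership of finitely many points of $X$. The separation between $\mc{H}$-elements and points of $X$ will be the engine throughout: if $H$ separates $x$ from $y$ (meaning $x \in H \not\ni y$ or vice versa), then any $K \in 2^X$ agreeing with $H$ on $\{x,y\}$ also separates $x$ from $y$.

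\textbf{Isolation of non-trivial elements.} First I would fix $H \in \mc{H}^\ast$, so $H \neq \em$ and $H \neq X$; pick $x \in H$ and $y \in \lnot H$, so $H$ separates $x$ from $y$. Let $\mc{S}(x,y) \subseteq \mc{H}$ be the finite set of half-spaces separating $x$ from $y$, which contains $H$. Consider the basic clopen neighbourhood $V$ of $H$ in $2^X$ consisting of those subsets containing $x$ and omitting $y$; every $K \in \mc{H} \cap V$ separates $x$ from $y$, hence lies in $\mc{S}(x,y)$. This shows $\mc{H} \cap V$ is finite. To cut it down to the single point $H$, I use total order-disconnectedness of $2^X$ (or directly: finitely many points suffice to separate finitely many distinct subsets): for each $K \in \mc{S}(x,y)$ with $K \neq H$, choose a point $z_K$ on which $H$ and $K$ disagree, and intersect $V$ with the clopen set pinning down $H$'s value at each such $z_K$. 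The resulting clopen neighbourhood of $H$ meets $\mc{H}$ only in $H$ itself, so $H$ is isolated.

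\textbf{Closedness.} Now suppose $A \in 2^X$ lies in the closure of $\mc{H}$; I want $A \in \mc{H}$. If $A$ is trivial, say $A = X$: I claim $X \in \mc{H}$. Indeed, $X = \lnot\em$ and $\em = 0$ is the least element of the pocset, which must lie in $\mc{H}$ by definition of a pocset (it has a least element $0$), and then $X = \lnot 0 \in \mc{H}$ since $\mc{H}$ is closed under $\lnot$ — so the trivial elements are automatically in $\mc{H}$, and in particular $A \in \mc{H}$. (If instead $\mc{H}$ is only required to be a subpocset containing $0$, the same argument applies.) So assume $A$ is non-trivial, pick $x \in A$ and $y \in \lnot A$. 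Every basic clopen neighbourhood of $A$ meets $\mc{H}$; in particular the neighbourhood $V$ of subsets containing $x$ and omitting $y$ does, and as above $\mc{H} \cap V \subseteq \mc{S}(x,y)$ is finite. Since $A$ is in the closure of $\mc{H}$ and $\mc{H} \cap V$ is a finite set, $A$ is in the closure of this finite set; but finite sets are closed in $2^X$ (it is Hausdorff), so $A \in \mc{H} \cap V \subseteq \mc{H}$, as desired.

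\textbf{The main obstacle.} I expect the only real subtlety to be the bookkeeping around trivial elements: one must be careful whether ``$\mc{H}$ is finitely-separating'' is being applied to $\mc{H}$ or to $\mc{H}^\ast$, and confirm that $\em, X$ genuinely belong to $\mc{H}$ so that ``closed'' holds on the nose rather than up to those two points — this is where the remark about the $(-)^\ast$ convention and the pocset axiom ``$0 \in \mc{H}$'' must be invoked cleanly. Everything else is a routine manipulation of basic clopen sets in the product topology, powered by the single observation that the set of half-spaces separating a fixed pair of points is finite, which immediately localizes the topology of $\mc{H}$ near any non-trivial half-space to a finite set.
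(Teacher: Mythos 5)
Your proof is correct and takes essentially the same approach as the paper's: both use the finiteness of the set of half-spaces separating a fixed pair $x\in A\not\ni y$ to produce a clopen neighborhood of $A$ meeting $\mc{H}$ in at most $\{A\}$, which simultaneously handles isolation and closedness once one notes that $\em,X\in\mc{H}$ by the pocset axioms. The paper packages the two claims into one statement (every non-trivial $A\in 2^X$ fails to be a limit point of $\mc{H}$) and directly constructs the shrunken neighborhood by pinning down disagreement points, whereas you split into two parts and invoke Hausdorffness for closedness; this is a presentational difference only.
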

    \begin{proof}
        It suffices to show that the limit points of $\mc{H}$ are trivial, so let $A\in2^X\comp\l\{\em,X\r\}$. Fix $x\in A\not\ni y$. Since $\mc{H}$ is finitely-separating, there are finitely-many $H\in\mc{H}$ with $x\in H\not\ni y$, and for each such $H\in\mc{H}\comp\l\{A\r\}$, there is either some $x_H\in A\comp H$ or $y_H\in H\comp A$. The family of all subsets $B\subseteq X$ containing $x$ and each $x_H$, but not $y$ or any $y_H$, is then a clopen neighborhood of $A$ disjoint from $\mc{H}\comp\l\{A\r\}$, as desired.
    \end{proof}

    Our main method of identifying the finitely-separating pocsets in graphs is the following

    \begin{lemma}\label{lem:finitely_separating_iff_on_boundary_of_finite}
        Let $\mc{H}\subseteq2^X$ be a pocset in a connected graph $(X,G)$. If each $x\in X$ is on the vertex boundary of finitely-many half-spaces in $\mc{H}$, then $\mc{H}$ is finitely-separating. The converse holds too if $(X,G)$ is locally-finite.
    \end{lemma}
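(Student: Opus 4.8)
The plan is to prove both implications through the same elementary dictionary: for $H\in\mc{H}$ and $x\in X$, I claim that $x\in\del_\mathsf{v}H$ if and only if $x$ has a neighbour $y$ with exactly one of $x,y$ in $H$ --- equivalently, $x\in\del_\mathsf{v}H$ iff $H$ separates $x$ from some neighbour of $x$. This is immediate from the definitions of $\del_\mathsf{iv}$ and $\del_\mathsf{ov}$ (and $\del_\mathsf{v}=\del_\mathsf{iv}\sqcup\del_\mathsf{ov}$). Granting this, each direction becomes a finite counting argument.

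For the forward direction, I would assume every $x\in X$ lies on the vertex boundary of only finitely many half-spaces of $\mc{H}$, fix $x,y\in X$, and use connectedness of $(X,G)$ to choose a finite path $x=v_0,v_1,\dots,v_n=y$. If $H\in\mc{H}$ separates $x$ from $y$, then since exactly one of $v_0,v_n$ lies in $H$, the membership pattern of the $v_i$ in $H$ must change somewhere along the path, so some edge $v_iv_{i+1}$ has exactly one endpoint in $H$, whence $v_i\in\del_\mathsf{v}H$. Therefore
\[
    \{H\in\mc{H}:H\text{ separates }x,y\}\ \subseteq\ \bigcup_{i=0}^{n-1}\{H\in\mc{H}:v_i\in\del_\mathsf{v}H\}
\]
is a finite union of finite sets, so $\mc{H}$ is finitely-separating.

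For the converse, I would assume $(X,G)$ is locally-finite and $\mc{H}$ is finitely-separating, and fix $x\in X$ together with its finitely many neighbours $y_1,\dots,y_k$. By the dictionary above, any $H\in\mc{H}$ with $x\in\del_\mathsf{v}H$ separates $x$ from some $y_j$, so $\{H\in\mc{H}:x\in\del_\mathsf{v}H\}\subseteq\bigcup_{j=1}^{k}\{H\in\mc{H}:H\text{ separates }x,y_j\}$ is again a finite union of finite sets, as needed.

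I do not anticipate any real obstacle here: the entire content is the boundary-to-separation dictionary, and the only step I must be careful not to skip is invoking connectedness in the forward direction, since that is what produces the finite path along which a separating half-space is forced to carry one of the $v_i$ on its boundary. (Note that neither the involution $\lnot$ nor the least element $0$ of the pocset plays any role, so the argument applies verbatim to an arbitrary family $\mc{H}\subseteq2^X$.)
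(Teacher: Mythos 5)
Your proof is correct and takes essentially the same route as the paper: both directions reduce to the observation that $x\in\del_\mathsf{v}H$ exactly when $H$ separates $x$ from a neighbour, then count along a fixed path (forward direction) or over the finitely many neighbours of $x$ (converse). The paper's version is just more compressed.
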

    \begin{proof}
        Any $H\in\mc{H}$ separating $x,y\in X$ separates some edge on any fixed path between $x$ and $y$, and there are only finitely-many such $H$ for each edge. If $(X,G)$ is locally-finite, then each $x\in X$ is separated from each of its finitely-many neighbors by finitely-many $H\in\mc{H}$.
    \end{proof}

    In the case that $\mc{H}$ is a pocset consisting of connected co-connected half-spaces with finite vertex boundary, finite-separation also controls the degree of `non-nestedness' of $\mc{H}$.

    \begin{definition}
        For a connected locally-finite graph $(X,G)$, we let $\mc{H}_\mathrm{conn}(X)$ and $\mc{H}_{\del<\infty}(X)$ respectively denote the subpocset of connected co-connected half-spaces in $2^X$ and the half-spaces in $2^X$ with finite-vertex boundary. A \textit{cut} in $(X,G)$ is a half-space $H\in\mc{H}_{\del<\infty}(X)\cap\mc{H}_\mathrm{conn}(X)$.
    \end{definition}

    \begin{definition}\label{def:nested}
        Let $\mc{H}\subseteq2^X$ be a pocset. Two half-spaces $H,K\in\mc{H}$ are \textit{nested} if $\lnot^iH\cap\lnot^jK=\em$ for some $i,j\in\l\{0,1\r\}$, where $\lnot^0H\coloneqq H$ and $\lnot^1H\coloneqq\lnot H$. We say that $\mc{H}$ is \textit{nested} if every pair $H,K\in\mc{H}$ is nested.
    \end{definition}

    \begin{lemma}\label{lem:connected_cuts_non_nested_finitely_others}
        For a pocset $\mc{H}$ of finitely-separating cuts, each $H\in\mc{H}$ is non-nested with finitely-many others.
    \end{lemma}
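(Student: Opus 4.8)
The plan is to fix a cut $H\in\mc{H}$ and to show that any $K\in\mc{H}$ that is non-nested with $H$ must separate two vertices of the inner vertex boundary $\del_\mathsf{iv}H$. Since $H$ is a cut, $\del_\mathsf{iv}H\subseteq\del_\mathsf{v}H$ is finite, so the finite-separation hypothesis on $\mc{H}$ will then immediately bound the number of such $K$.

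So suppose $K\in\mc{H}$ is non-nested with $H$; by Definition \ref{def:nested} this means that all four ``quadrants'' $\lnot^iH\cap\lnot^jK$ (for $i,j\in\{0,1\}$) are nonempty. Now $H\cap K$ and $\lnot H\cap K$ are nonempty subsets of the connected set $K$, so a path in $K$ from a point of the former to a point of the latter must traverse some edge with one endpoint $c\in H$ and the other in $\lnot H$; then $c\in\del_\mathsf{iv}H$, and also $c\in K$ since the whole path lies in $K$. Arguing identically with the nonempty quadrants $H\cap\lnot K$ and $\lnot H\cap\lnot K$ inside the connected set $\lnot K$ yields a vertex $c'\in\del_\mathsf{iv}H$ with $c'\in\lnot K$. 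Thus $K$ separates the two vertices $c,c'$ of $\del_\mathsf{iv}H$.

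Finally, $\del_\mathsf{iv}H$ is finite, hence has finitely many unordered pairs of vertices; as $\mc{H}$ is finitely-separating, each such pair is separated by only finitely many half-spaces in $\mc{H}$, so the union over all pairs is a finite subset of $\mc{H}$ containing every half-space non-nested with $H$. The only real content here is the reduction in the middle paragraph --- that non-nestedness forces both $K$ and $\lnot K$ to ``straddle'' $H$ and hence to meet $\del_\mathsf{iv}H$ --- and the single point to state with care is that a connected vertex set meeting both $H$ and $\lnot H$ must contain a vertex of $\del_\mathsf{iv}H$, which is clear by localizing the crossing to one edge along a path. I do not expect any genuine obstacle.
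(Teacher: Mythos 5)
Your proof is correct, and it is cleaner than the paper's. Both arguments share the same opening move: since $K$ and $\lnot K$ are connected and all four quadrants are nonempty, one finds a vertex of $\del_\mathsf{v}H$ in $K$ and another in $\lnot K$. The difference is in how these boundary vertices are used. The paper fixes, for each pair $x\in\del_\mathsf{v}H\cap K$ and $y\in\del_\mathsf{v}H\cap\lnot K$, a path $p_{xy}$ between them, locates a vertex $z\in p_{xy}\cap\del_\mathsf{v}K$, and then invokes Lemma~\ref{lem:finitely_separating_iff_on_boundary_of_finite} to conclude that only finitely many $K$ can have boundary passing through any one of the finitely many vertices on the finitely many paths $p_{xy}$; note that this direction of Lemma~\ref{lem:finitely_separating_iff_on_boundary_of_finite} also relies on local finiteness. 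You instead observe directly that $K$ separates the pair $\{c,c'\}\subseteq\del_\mathsf{iv}H$ you found, and then apply the finite-separation hypothesis to the finitely many pairs of vertices in the finite set $\del_\mathsf{iv}H$. This bypasses the detour through fixed paths and the auxiliary boundary lemma, using finite-separation in its raw form. The argument is airtight: $c\neq c'$ since one lies in $K$ and the other in $\lnot K$, and finiteness of $\del_\mathsf{iv}H$ comes from $H$ being a cut.
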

    \begin{proof}
        Fix $H\in\mc{H}$ and let $K\in\mc{H}$ be non-nested with $H$. By connectedness, the non-empty sets $H\cap K$ and $\lnot H\cap K$ are joined by a path in $K$, so $\del_\mathsf{v}H\cap K\neq\em$; similarly, $\del_\mathsf{v}H\cap\lnot K\neq\em$. For each $x\in\del_\mathsf{v}H\cap K$ and $y\in\del_\mathsf{v}H\cap\lnot K$, any fixed path $p_{xy}$ between them contains some $z\in\del_\mathsf{v}K\cap p_{xy}$; thus, any $K\in\mc{H}$ non-nested with $H$ contains some $z\in\del_\mathsf{v}K\cap p_{xy}$.

        Then, since there are finitely-many such $x,y\in\del_\mathsf{v}H$, for each of which there are finitely-many $z\in p_{xy}$, for each of which there are finitely-many $K\in\mc{H}$ with $z\in\del_\mathsf{v}K$ (by Lemma \ref{lem:finitely_separating_iff_on_boundary_of_finite}, since $\mc{H}$ is finitely-separating), there can only be finitely-many $K\in\mc{H}$ non-nested with $H$.
    \end{proof}

    \subsection{Ends of graphs}\label{sec:ends_of_graphs}

    Let $(X,G)$ be a connected locally-finite graph, and consider the Boolean algebra of finite vertex boundary half-spaces $\mc{H}_{\del<\infty}(X)\subseteq2^X$.

    \begin{definition}
        The \textit{end compactification} of $(X,G)$ is the Stone space $\widehat{X}$ of $\mc{H}_{\del<\infty}(X)$, whose non-principal ultrafilters are the \textit{ends} of $(X,G)$. We let $\epsilon(X)$ denote the set of ends of $(X,G)$.
    \end{definition}

    Identifying $X\into\widehat{X}$ via principal ultrafilter map $x\mapsto p_x$, we have $\epsilon(X)=\widehat{X}\comp X$. By definition, $\widehat{X}$ admits a basis of clopen sets containing $\widehat{A}\coloneqq\{p\in\widehat{X}\st A\in p\}$ for each $A\in\mc{H}_{\del<\infty}(X)$; abusing notation, we write $p\in A$, and say $A$ \textit{contains} $p$, when $p\in\widehat{A}$. Since $A\subseteq B$ iff $\widehat{A}\subseteq\widehat{B}$, we also write $p\in A\subseteq B$ for $p\in\widehat{A}\subseteq\widehat{B}$.

    \begin{lemma}\label{lem:infinite_iff_contains_end}
        A finite-boundary subset $A\in\mc{H}_{\del<\infty}(X)$ is infinite iff it contains an end in $(X,G)$.
    \end{lemma}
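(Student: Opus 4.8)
The plan is to reduce everything to statements about ultrafilters on the Boolean algebra $\mc{H}_{\del<\infty}(X)$, using only that $(X,G)$ is connected and locally-finite. I would first record two routine facts. \textbf{(a)} Since $(X,G)$ is locally-finite, every finite $F\subseteq X$ has finite vertex boundary --- indeed $\del_\mathsf{iv}F\subseteq F$ and $\del_\mathsf{ov}F\subseteq\Ball_1(F)\comp F$ are both finite --- and hence $F$ and its complement $X\comp F$ (whose vertex boundary equals $\del_\mathsf{v}F$) both lie in $\mc{H}_{\del<\infty}(X)$. \textbf{(b)} The atoms of $\mc{H}_{\del<\infty}(X)$ are exactly the singletons $\{x\}$: they lie in $\mc{H}_{\del<\infty}(X)$ by (a), any half-space strictly contained in $\{x\}$ is empty, and any half-space with two distinct points properly contains a singleton. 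Hence the principal ultrafilters are precisely the $p_x$; equivalently, an ultrafilter $p$ is an end iff $\{x\}\notin p$ for every $x\in X$, iff $X\comp\{x\}\in p$ for every $x\in X$, iff $p$ contains every cofinite half-space.

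For the backward implication, suppose $A$ is finite, say $A=\{x_1,\dots,x_n\}$. Then $A=\{x_1\}\vee\cdots\vee\{x_n\}$ in $\mc{H}_{\del<\infty}(X)$, so any ultrafilter containing $A$ contains some $\{x_i\}$ and is therefore principal. Contrapositively, if $\widehat{A}$ contains an end, then $A$ is infinite.

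For the forward implication, suppose $A$ is infinite. For each finite $F\subseteq X$, the half-space $A\comp F=A\cap(X\comp F)$ lies in $\mc{H}_{\del<\infty}(X)$ (since $\del_\mathsf{v}(A\comp F)\subseteq\del_\mathsf{v}A\cup\del_\mathsf{v}F$ is finite) and is nonempty because $A$ is infinite; moreover $(A\comp F)\cap(A\comp F')=A\comp(F\cup F')$, so $\l\{A\comp F\st F\subseteq X\text{ finite}\r\}$ is a proper filter base in $\mc{H}_{\del<\infty}(X)$. By the ultrafilter lemma (equivalently, Zorn's lemma) it extends to an ultrafilter $p$, which then satisfies $A\in p$ and, since $A\comp\{x\}\subseteq X\comp\{x\}$, also $X\comp\{x\}\in p$ for every $x\in X$; by (b), $p$ is an end, and $p\in\widehat{A}$.

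I do not expect a genuine obstacle here: the content is just the Stone-dual restatement of compactness of $\widehat{X}$, namely that an infinite subset cannot be covered by the trace of finitely many principal ultrafilters, and the only bookkeeping is to check the relevant sets have finite boundary. An alternative, more graph-theoretic route for the forward direction --- useful if one wants an explicit end --- is to observe that $G[A]$ has at most $\lvert\del_\mathsf{iv}A\rvert$ components (a component disjoint from $\del_\mathsf{iv}A$ would, by connectedness of $X$, be all of $X$, forcing $A=X$ and reducing to the easy case of an infinite connected locally-finite graph), hence some component $C\subseteq A$ is infinite; König's lemma gives a ray in $C$, and the set of finite-boundary half-spaces containing a tail of that ray is an ultrafilter (a ray has distinct vertices, so meets any finite $\del_\mathsf{v}B$ only finitely often and eventually lies entirely on one side of $B$), hence an end, and it lies in $\widehat{A}$ since the whole ray does.
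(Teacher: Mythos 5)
Your main argument is correct but takes a genuinely different route for the forward direction. The paper extracts, via K\H{o}nig's lemma, an explicit infinite ray $(x_n)\subseteq A$ and exhibits the end concretely as the collection of finite-boundary half-spaces containing a tail of the ray; the ultrafilter property is immediate because the ray, being injective, meets the finite set $\del_\mathsf{v}H$ only finitely often and so eventually settles on one side of $H$. You instead observe that $\l\{A\comp F\st F\subseteq X\text{ finite}\r\}$ is a proper filter base in the Boolean algebra $\mc{H}_{\del<\infty}(X)$ and invoke the ultrafilter lemma. Both are valid; the paper's is explicit and choice-light, whereas yours is a clean Stone-duality argument at the cost of appealing to Zorn. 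Your ``alternative, more graph-theoretic route'' at the end is essentially the paper's proof, and in fact you supply a detail the paper elides: K\H{o}nig's lemma needs a connected locally-finite infinite graph, and $G[A]$ need not be connected --- your observation that $G[A]$ has at most $|\del_\mathsf{iv}A|$ components (any component missing $\del_\mathsf{iv}A$ would be clopen in the connected $X$), so that some component is infinite, closes that small gap. The backward direction is the same in both: a non-principal ultrafilter contains no finite set; you just make the reduction to atoms explicit where the paper writes only ``ends are non-principal.''
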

    \begin{proof}
        The converse direction follows since ends are non-principal. If $A$ is infinite, then by local-finiteness of $(X,G)$, K\H{o}nig's Lemma furnishes some infinite ray $(x_n)\subseteq A$. Then, $A$ is contained in the filter
        \begin{equation*}
            p\coloneqq\l\{H\in\mc{H}_{\del<\infty}(X)\st\fa^\infty n(x_n\in H)\r\},
        \end{equation*}
        which is ultra since $H\in p$ are of \textit{finite}-boundary, and is non-principal since it contains cofinite sets.
    \end{proof}

    \begin{definition}
        A pocset $\mc{H}\subseteq\mc{H}_{\del<\infty}(X)$ is \textit{dense towards ends} of $(X,G)$ if $\mc{H}$ contains a neighborhood basis for every end in $\epsilon(X)$.

        In other words, $\mc{H}$ is dense towards ends if for every $p\in\epsilon(X)$ and every (clopen) neighborhood $A\ni p$, where $A\in\mc{H}_{\del<\infty}(X)$, there is some $H\in\mc{H}$ with $p\in H\subseteq A$.
    \end{definition}

    We will show in Section \ref{sec:graphs_with_dense_families_of_cuts} that certain half-spaces $\mc{H}\subseteq\mc{H}_{\del<\infty}(X)$ induced by a locally-finite quasi-tree $(X,G)$ is dense towards ends. It will also be important that these half-spaces be cuts, in that witnesses to density can also be found in $\mc{H}\cap\mc{H}_\mathrm{conn}(X)$. The following lemma takes care of this.

    \begin{lemma}\label{lem:connected_witness_to_density}
        If a subpocset $\mc{H}\subseteq\mc{H}_{\del<\infty}$ is dense towards ends, then there is a subpocset $\mc{H}'\subseteq\mc{H}_{\del<\infty}\cap\mc{H}_\mathrm{conn}$, which is also dense towards ends, such that every $H'\in\mc{H}'$ has $\del_\mathsf{ie}H'\subseteq\del_\mathsf{ie}H$ for some $H\in\mc{H}$.
    \end{lemma}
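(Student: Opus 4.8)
The plan is to take
\[
  \mc{H}'\coloneqq\l\{\em,X\r\}\cup\l\{H'\in\mc{H}_{\del<\infty}\cap\mc{H}_\mathrm{conn}\st\del_\mathsf{ie}H'\subseteq\del_\mathsf{ie}H\text{ for some }H\in\mc{H}\r\}
\]
and verify it works. That $\mc{H}'$ is a subpocset of $\mc{H}_{\del<\infty}\cap\mc{H}_\mathrm{conn}$ is nearly formal: its elements lie in $\mc{H}_{\del<\infty}\cap\mc{H}_\mathrm{conn}$ by fiat, $\em$ is the least element with $\lnot\em=X\neq\em$, and $\mc{H}_\mathrm{conn}$ is closed under $\lnot$; moreover, since $G$ is symmetric, $\del_\mathsf{ie}(\lnot H')=\del_\mathsf{oe}H'$ is the coordinate-transpose of $\del_\mathsf{ie}H'$, so $\del_\mathsf{ie}H'\subseteq\del_\mathsf{ie}H$ forces $\del_\mathsf{ie}(\lnot H')\subseteq\del_\mathsf{oe}H=\del_\mathsf{ie}(\lnot H)$, with $\lnot H\in\mc{H}$ since $\mc{H}$ is a pocset. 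It remains to show $\mc{H}'$ is dense towards ends.

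For this I would first reduce to \emph{co-connected} neighborhoods. Fix a basepoint $x_0$; for an end $p$, let $C_k$ be the component of $X\comp\Ball_k(x_0)$ containing $p$. Each $C_k$ lies in $\mc{H}_{\del<\infty}\cap\mc{H}_\mathrm{conn}$: it is connected, $\del_\mathsf{v}C_k$ is finite by local-finiteness, and $\lnot C_k$ is the union of the connected ball $\Ball_k(x_0)$ with the remaining components of $X\comp\Ball_k(x_0)$, each of which is adjacent to $\Ball_k(x_0)$, so $\lnot C_k$ is connected. The clopen sets $\widehat{C_k}$ decrease with $\bigcap_k\widehat{C_k}=\{p\}$ — any vertex, and any end $\neq p$, is separated from $p$ by some $\Ball_k(x_0)$ — so by compactness of $\widehat{X}$ they are cofinal among clopen neighborhoods of $p$. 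Hence it suffices to find, for each $k$, some $H'\in\mc{H}'$ with $p\in H'\subseteq C_k$.

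So fix $p$ and a co-connected neighborhood $C\coloneqq C_k\ni p$. Since $\mc{H}$ is dense towards ends, pick $H\in\mc{H}$ with $p\in H\subseteq C$, and let $A$ be the connected component of $G[H]$ with $A\in p$ (unique, as $p$ is an ultrafilter and $G[H]$ has finitely-many components). Since $A$ is a whole component of $G[H]$, no vertex of $A$ is adjacent to $H\comp A$ and no vertex of $\lnot A$ adjacent to $A$ lies in $H\comp A$; hence $\del_\mathsf{v}A\subseteq\del_\mathsf{v}H$, and $A\subseteq H\subseteq C$. Now $\lnot C$ is nonempty and connected, and $\lnot C\subseteq\lnot H\subseteq\lnot A$, so it lies in a single component $B$ of $G[\lnot A]$; set $H'\coloneqq X\comp B$. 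Because $G$ is connected, every component of $G[\lnot A]$ is adjacent to $A$, so $H'=A\cup\bigcup\l\{F\st F\text{ a component of }G[\lnot A],\,F\neq B\r\}$ is connected, while $\lnot H'=B$ is connected, and $\del_\mathsf{v}H'$ is finite; thus $H'\in\mc{H}_{\del<\infty}\cap\mc{H}_\mathrm{conn}$. Moreover $p\in H'$ (as $p\notin\lnot A\supseteq B$) and $H'=X\comp B\subseteq X\comp\lnot C=C$. Finally, a short boundary chase — using repeatedly that $A$ is a component of $G[H]$ and $B$ a component of $G[\lnot A]$ — gives $\del_\mathsf{iv}H'\subseteq\del_\mathsf{iv}H$ and $\del_\mathsf{ov}H'=\del_\mathsf{iv}B\subseteq\del_\mathsf{ov}H$, so $\del_\mathsf{ie}H'\subseteq\del_\mathsf{ie}H$; hence $H'\in\mc{H}'$, as required.

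I expect the only genuinely delicate point to be the reduction to co-connected neighborhoods: for a general neighborhood $A_0\ni p$ the complement $\lnot A_0$ need not be connected, and then a witness $H\in\mc{H}$ with $H\subseteq A_0$ can have $X\comp B$ protruding outside $A_0$ (where $B$ is a component of $G[\lnot A]$); passing first to the cuts $C_k$, whose complements are connected, is exactly what pins $X\comp B$ inside $C_k$. The boundary chase is routine, but needs care, since it is precisely where the hypothesis that $A$ is a component of $G[H]$ — rather than an arbitrary subset — is used.
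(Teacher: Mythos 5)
Your proof is correct and takes the same core construction as the paper (take the component $A\ni p$ of a small $H\in\mc{H}$, then take the complement of the component $B$ of $\lnot A$ on ``the other side''), but it organizes the argument differently, and the difference is real. The paper handles an arbitrary clopen neighborhood $A_0\ni p$ directly: it enlarges $\del_\mathsf{v}A_0$ to a finite \emph{connected} set $B$, applies density of $\mc{H}$ to find $H\subseteq\lnot B$, and then, after forming $H'$ as you do (with $\lnot H'$ the component of $\lnot H'_0$ containing $B$), uses the fact that the connected set $H'$ is disjoint from $B\supseteq\del_\mathsf{v}A_0$ to conclude it lies wholly in $A_0$ or $\lnot A_0$, and rules out the latter since $p\in H'$. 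You instead reduce up front to neighborhoods $C_k$ with connected complement by a compactness argument on the end compactification, after which the containment $H'=X\comp B\subseteq X\comp\lnot C=C$ is immediate from $\lnot C\subseteq B$. Both moves are doing the same job — pinning down which component of $\lnot A$ to take as $\lnot H'$ so that it swallows everything outside the target neighborhood — but your version front-loads the topology (compactness of $\widehat{X}$, the exhaustion by $C_k$) to make the final containment trivial, while the paper avoids the reduction at the cost of a slightly subtler connectedness argument at the end. Your phrasing of $\mc{H}'$ as the \emph{maximal} family with the required boundary property (rather than the paper's explicit construction) is also a nice simplification, since it makes $\lnot$-closure and the containment statement automatic and lets you focus entirely on density. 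The one place I'd add a sentence is the boundary chase: it is worth spelling out that an edge from $B$ to $H'$ must go into $A$ (since $B$ is a component of $G[\lnot A]$, so has no edges to other components of $\lnot A$) and that its endpoint in $B$ lies in $\lnot H$ (since $A$ is a component of $G[H]$), which together give $\del_\mathsf{ie}H'\subseteq\del_\mathsf{ie}H$; this also shows $\del_\mathsf{v}A\subseteq\del_\mathsf{v}H$ is finite, which you implicitly need to know that $A\in\mc{H}_{\del<\infty}$ and hence that it makes sense to ask $A\in p$.
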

    \begin{proof}
        A first attempt is to let $\mc{H}'$ be the connected components $H'_0$ of elements $H\in\mc{H}$, but this fails since $\lnot H'_0$ is not necessarily connected. Instead, we further take a component $\lnot H'$ of $\lnot H'_0$, which is co-connected since $H'$ contains $H'_0$ and the other components of $\lnot H'_0$, each of which is connected to $H'_0$ via $\del_\mathsf{ie}H'_0$. Formally,
        \begin{equation*}
            \mc{H}'\coloneqq\l\{H'\subseteq X\st H\in\mc{H}\textrm{ and }H'_0\in H/G\textrm{ and }\lnot H'\in\lnot H'_0/G\r\},
        \end{equation*}
        where $H/G$ denotes the $G$-components of $H$. Clearly $\del_\mathsf{ie}H'\subseteq\del_\mathsf{ie}H'_0\subseteq\del_\mathsf{ie}H$, and since $H'\in\mc{H}_\mathrm{conn}(X)$, it remains to show that $\mc{H}'$ is dense towards ends.

        Fix an end $p\in\epsilon(X)$ and a neighborhood $p\in A\in\mc{H}_{\del<\infty}(X)$. Let $B\supseteq\del_\mathsf{v}A$ be finite connected, which can be obtained by adjoining paths between its components. Then $\lnot B\in p$ since $p$ is non-principal, so there is $H\in\mc{H}$ with $p\in H\subseteq\lnot B$. Since $H\in\mc{H}_{\del<\infty}(X)$, it has finitely-many connected components, so exactly one of them belongs to $p$, say $p\in H'_0\subseteq H$. Note that $B\subseteq\lnot H\subseteq\lnot H'_0$, so since $B$ is connected, there is a unique component $\lnot H'\subseteq\lnot H'_0$ containing $B$.

        Observe that $H'\in\mc{H}'$ and $p\in H'$. Lastly, since $H'$ is connected and is disjoint from $\del_\mathsf{v}A\subseteq B$, and since $H'\subseteq\lnot A$ would imply $\lnot H'\in p$, this forces $H'\subseteq A$, and hence $p\in H'\subseteq A$ as desired.
    \end{proof}

    \subsection{Median graphs and projections}

    Starting from a profinite pocset $\mc{H}$ with every non-trivial element isolated, we construct in Section \ref{sec:the_dual_median_graph_of_a_pocset} its dual median graph $\mc{M}(\mc{H})$.

    We devote this section and the next to study some basic properties of median graphs and their projections, which will be used in Section \ref{sec:cycle_cutting_algorithm} to construct a spanning tree certain median graphs. For more comprehensive references of median graphs, and their general theory, see \cite{Rol98} and \cite{Bow22}.

    \begin{definition}
        A \textit{median graph} is a connected graph $(X,G)$ such that for any $x,y,z\in X$, the intersection
        \begin{equation*}
            [x,y]\cap[y,z]\cap[x,z]
        \end{equation*}
        is a singleton, whose element $\l\langle x,y,z\r\rangle$ is called the \textit{median} of $x,y,z$. Thus we have a ternary \textit{median} operation $\l\langle\cdot,\cdot,\cdot\r\rangle:X^3\to X$, and a \textit{median homomorphism} $f:(X,G)\to(Y,H)$ is a map preserving said operation.
    \end{definition}

    \begin{lemma}\label{lem:projections}
        For any $\em\neq A\subseteq X$ and $x\in X$, there is a unique point in $\cvx(A)$ between $x$ and every point in $A$, called the $\mathrm{projection}$ of $x$ towards $A$, denoted $\proj_A(x)$.

        Moreover, we have $\bigcap_{a\in A}[x,a]=[x,\proj_A(x)]$, and for any $y$ in this set, we have $\proj_A(y)=\proj_A(x)$.
    \end{lemma}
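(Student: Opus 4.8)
The plan is to recast everything in terms of the relation $u\le_x v$, meaning $u\in[x,v]$. Using the displayed betweenness equivalence one checks readily that $\le_x$ is a partial order with least element $x$ (reflexivity is $u\in[x,u]$, transitivity is the equivalence applied to $(x,u,v,w)$, antisymmetry follows since $u\in[x,v]$ and $v\in[x,u]$ force $d(u,v)=0$), and that $d(x,\cdot)$ is strictly $\le_x$-increasing. In this language, a point is ``between $x$ and every point of $A$'' precisely when it is a $\le_x$-lower bound of $A$, and $\proj_A(x)$ will be exhibited as the $\le_x$-meet of $A$, lying in $\cvx(A)$.

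The heart of the matter is a convexity claim: \emph{for fixed $x,u$, the set $\{c\in X:u\le_x c\}$ is convex} --- equivalently, if $u\in[x,c_1]\cap[x,c_2]$ and $c\in[c_1,c_2]$, then $u\in[x,c]$. I would prove this using only the median operation. Since $c\in[c_1,c_2]$ we have $\langle c_1,c,c_2\rangle=c$, so setting $w\coloneqq\langle x,u,c\rangle$ and expanding by the median-algebra identity $\langle\langle a,b,c\rangle,d,e\rangle=\langle a,\langle b,d,e\rangle,\langle c,d,e\rangle\rangle$ (valid in any median graph, see \cite{Bow22}), together with $\langle x,u,c_2\rangle=u$ (as $u\in[x,c_2]$), one obtains $w=\langle c_1,w,u\rangle$, i.e. $u\-w\-c_1$. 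Combined with $x\-u\-c_1$ (as $u\in[x,c_1]$), the betweenness equivalence yields $x\-u\-w$, i.e. $u\in[x,w]$; but $w\in[x,u]$ by construction of the median, so antisymmetry of $\le_x$ forces $w=u$, that is $u\in[x,c]$. Running this through the stages $A_0\coloneqq A$, $A_{n+1}\coloneqq\bigcup_{a,b\in A_n}[a,b]$ of the convex-hull construction then upgrades it to the equality $\bigcap_{a\in A}[x,a]=\bigcap_{c\in\cvx(A)}[x,c]$.

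Everything else is bookkeeping. For existence, let $p\in\cvx(A)$ minimize $d(x,\cdot)$ over $\cvx(A)$; for any $c\in\cvx(A)$, the median $\langle x,p,c\rangle$ lies in $[p,c]\subseteq\cvx(A)$ and in $[x,p]$, so by minimality of $p$ it is at distance $d(x,p)$ from $x$, and hence (being in $[x,p]$) equals $p$, giving $p\in[x,c]$; in particular $p\in[x,a]$ for all $a\in A$. For uniqueness, any $p'\in\cvx(A)$ lying in every $[x,a]$ satisfies $p'\le_x c$ for all $c\in\cvx(A)$ by the claim, so $p'\le_x p$, while $p\in[x,p']$ since $p'\in\cvx(A)$, so $p\le_x p'$; hence $p=p'$, and we set $\proj_A(x)\coloneqq p$. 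The identity $\bigcap_{a\in A}[x,a]=[x,\proj_A(x)]$ follows at once: $\supseteq$ is transitivity of $\le_x$, and $\subseteq$ is the claim applied to a $\le_x$-lower bound $y$ of $A$ together with $p\in\cvx(A)$. Finally, if $y\in[x,p]$ then for each $a\in A$ the facts $x\-y\-p$ and $x\-p\-a$ give $y\-p\-a$ by the betweenness equivalence, so $p\in\cvx(A)$ lies between $y$ and every point of $A$; by the uniqueness just proved, now with base point $y$, we get $\proj_A(y)=p=\proj_A(x)$. I expect the only subtle step to be the convexity claim --- specifically, arranging the median-identity computation so that it outputs a betweenness relation $u\-w\-c_1$ that feeds back cleanly into the ambient structure --- since the remaining steps are purely formal manipulations of $\le_x$.
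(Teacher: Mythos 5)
Your proof is correct, and it takes a genuinely different route from the paper's. For existence the paper iteratively builds $\proj_A(x)$ by repeatedly replacing $a_n$ with $\langle x,a,a_n\rangle$ for any $a\in A$ with $a_n\notin[x,a]$, terminating because $d(x,a_n)$ strictly decreases; you instead take a $d(x,\cdot)$-minimizer over $\cvx(A)$ and show it is gated. For the ``moreover'' clause the paper argues directly with intervals and uniqueness, whereas you isolate the structural fact that the upper cone $\{c\in X:u\in[x,c]\}$ is convex, prove it via a median-algebra distributivity identity, and then deduce everything formally from the order $\le_x$. Both are sound, and your convexity lemma is a pleasant observation in its own right (an alternative, ``external'' proof: this cone equals $\bigcap\{H\in\mc{H}_\mathrm{cvx}(X):u\in H\not\ni x\}$, an intersection of convex half-spaces, which fits the paper's viewpoint and sidesteps the long distributive law). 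One thing the paper's iterative construction buys that your minimizer does not immediately give is Remark \ref{rem:projections}: the explicit nested-median formula $\proj_A(x)=\langle x,a_m,\dots,\langle x,a_1,a_0\rangle\dots\rangle$ makes it obvious that median homomorphisms commute with $\proj_A$, a fact the paper then uses in Lemma \ref{lem:projection_homomorphism}. Your route would need a short extra argument there --- e.g.\ that a median homomorphism sends the characterizing betweenness conditions on $\proj_A(x)$ to those on $\proj_{f(A)}(f(x))$, which does follow from your convexity lemma but is worth a sentence.
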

    \begin{proof}
        To show existence, pick any $a_0\in A$. Given $a_n\in\cvx(A)$, if there exists $a\in A$ with $a_n\not\in[x,a]$, set $a_{n+1}\coloneqq\l\langle x,a,a_n\r\rangle\in\cvx(A)$. Then $a_0\-a_1\-\cdots\-a_n\-x$ for all $n$, so this sequence terminates in at most $d(a_0,x)$ steps at a point in $\cvx(A)$ between $x$ and every point in $A$. For uniqueness, if there exist two such points $a,b\in\cvx(A)$, then $x\-a\-b$ and $x\-b\-a$, forcing $a=b$.

        Finally, if $x\-y\-\proj_A(x)$ and $a\in A$, then $x\-\proj_A(x)\-a$ and hence $x\-y\-a$. Conversely, let $x\-y\-a$ for all $a\in A$. Since $[y,a]\subseteq[x,a]$ for all $a$, we see that
        \begin{equation*}
            \proj_A(y)\in\cvx(A)\cap\bigcap_{a\in A}[y,a]\subseteq\cvx(A)\cap\bigcap_{a\in A}[x,a]
        \end{equation*}
        and hence $\proj_A(y)=\proj_A(x)$ by uniqueness. But since $y\-\proj_A(y)\-a$, we have $x\-y\-\proj_A(y)$, and hence $x\-y\-\proj_A(x)$ as desired.
    \end{proof}

    \begin{remark}\label{rem:projections}
        It follows from the proof above that for any median homomorphism $f:(X,G)\to(Y,H)$, we have $f(\proj_A(x))=\proj_{f(A)}(f(x))$ for any $\em\neq A\subseteq X$ and $x\in X$. Indeed, we have
        \begin{equation*}
            \proj_A(x)=\l\langle x,a_m,\dots,\l\langle x,a_2,\l\langle x,a_1,a_0\r\rangle\r\rangle\dots\r\rangle
        \end{equation*}
        for some $m\leq d(a_0,x)$ and $a_0,\dots,a_m\in A$, and this is preserved by $f$.
    
        For $A\coloneqq\l\{a,b\r\}$, we have $\proj_A(x)=\l\langle a,b,x\r\rangle$, and hence $\cvx(A)=\proj_A(X)=\l\langle a,b,X\r\rangle=[a,b]$.
    \end{remark}

    \begin{lemma}\label{lem:cones_are_convex}
        For each $x,y\in X$, $\cone_x(y)$ is convex, and if $xGy$, then $\cone_x(y)\sqcup\cone_y(x)=X$.
    \end{lemma}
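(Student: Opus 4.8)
The plan is to work with the description $\cone_x(y)=\l\{z\in X\st x\-y\-z\r\}$, i.e.\ the set of $z$ with $y\in[x,z]$, and to deduce both assertions from the two interval identities recorded in Lemma~\ref{lem:projections} and Remark~\ref{rem:projections}.

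For convexity, I would fix $z,w\in\cone_x(y)$ and $u\in[z,w]$, and aim to show $y\in[x,u]$. Put $m\coloneqq\l\langle x,z,w\r\rangle$, so that $m\in[z,w]$ by the median axiom. The first step is the identity $[x,z]\cap[x,w]=[x,m]$, which is Lemma~\ref{lem:projections} for $A\coloneqq\l\{z,w\r\}$ together with $\proj_{\l\{z,w\r\}}(x)=\l\langle z,w,x\r\rangle=m$ from Remark~\ref{rem:projections}; since $y$ lies in both $[x,z]$ and $[x,w]$ by hypothesis, this gives $x\-y\-m$. The second — and main — step is to show that $m\in[x,u]$ for \emph{every} $u\in[z,w]$, i.e.\ that $m$ is the apex of the cone from $x$ over the whole interval $[z,w]$. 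For this I would apply Lemma~\ref{lem:projections} to $A\coloneqq[z,w]$: the point $\proj_{[z,w]}(x)$ lies in $\cvx([z,w])=[z,w]$ (intervals are convex, by Remark~\ref{rem:projections}) and is between $x$ and every point of $[z,w]$, hence in particular between $x$ and $z$ and between $x$ and $w$; by the uniqueness clause of Lemma~\ref{lem:projections} this forces $\proj_{[z,w]}(x)=\proj_{\l\{z,w\r\}}(x)=m$, so indeed $m\in[x,u]$ for all $u\in[z,w]$. Finally, from $x\-y\-m$ and $x\-m\-u$ the betweenness equivalence of the preliminaries (with $w,x,y,z\coloneqq x,y,m,u$) gives $x\-y\-m\-u$, so in particular $x\-y\-u$, i.e.\ $u\in\cone_x(y)$.

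For the second assertion, assume $xGy$. For disjointness, if $z\in\cone_x(y)\cap\cone_y(x)$ then $y\in[x,z]$ and $x\in[y,z]$; since also $y\in[x,y]\cap[y,z]$ and $x\in[x,y]\cap[x,z]$, the definition of the median forces $\l\langle x,y,z\r\rangle=y$ and $\l\langle x,y,z\r\rangle=x$ simultaneously, so $x=y$, contradicting irreflexivity of $G$. For covering, given any $z\in X$ the median $\l\langle x,y,z\r\rangle$ lies in $[x,y]$, which equals $\l\{x,y\r\}$ because $d(x,y)=1$; if $\l\langle x,y,z\r\rangle=y$ then $y\in[x,z]$, so $z\in\cone_x(y)$, and if $\l\langle x,y,z\r\rangle=x$ then $x\in[y,z]$, so $z\in\cone_y(x)$. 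Hence $\cone_x(y)\sqcup\cone_y(x)=X$.

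The main obstacle is the convexity claim, and within it the identification $\proj_{[z,w]}(x)=\l\langle x,z,w\r\rangle$; once that is in hand, everything else is a short manipulation with intervals and betweenness, and the second assertion is an easy application of the median axiom. (If one prefers to avoid the $[z,w]$-projection, one can instead show $\l\langle x,m,u\r\rangle=m$ for $u\in[z,w]$ by checking that $\l\langle x,m,u\r\rangle$ lies in $[x,m]\cap[z,w]$ and that this intersection is $\l\{m\r\}$ — essentially the same ingredients.)
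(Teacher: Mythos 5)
Your proof is correct and follows essentially the same plan as the paper: reduce convexity to showing that $y$ lies between $x$ and the median $m=\langle x,z,w\rangle$ and that $m$ lies between $x$ and every $u\in[z,w]$, then chain betweenness, with the partition assertion handled identically via $\langle x,y,z\rangle\in[x,y]=\{x,y\}$. The one place you diverge is in establishing $x\text{---}m\text{---}u$: you observe that $\proj_{[z,w]}(x)$ lands in $\cvx([z,w])=[z,w]$ and is between $x$ and both endpoints, so the uniqueness clause of Lemma~\ref{lem:projections} forces $\proj_{[z,w]}(x)=\proj_{\{z,w\}}(x)=m$, whereas the paper instead applies Lemma~\ref{lem:projections} a second time from the base point $a$ (i.e.\ uses $\proj_{\{b,x\}}(a)=\langle a,b,x\rangle$) to squeeze $\langle a,c,x\rangle$ into $[x,\langle a,b,x\rangle]$; both are short, valid consequences of the same lemma, and your variant is arguably a touch cleaner since it yields $x\text{---}m\text{---}u$ in one stroke for all $u\in[z,w]$.
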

    \begin{proof}
        Fix $a,b\in\cone_x(y)$ and $a\-c\-b$. It suffices to show that $x\-y\-\l\langle a,c,x\r\rangle$, for then $x\-y\-c$ since we have $x\-\l\langle a,c,x\r\rangle\-c$. Indeed, it follows from the following observations.
        \begin{itemize}
            \item $x\-y\-\l\langle a,b,x\r\rangle$, since $\l\langle a,b,x\r\rangle=\proj_{\l\{a,b\r\}}(x)$ and so $[x,\l\langle a,b,x\r\rangle]=[x,a]\cap[x,b]\ni y$ by Lemma \ref{lem:projections}.
            \item $x\-\l\langle a,b,x\r\rangle\-\l\langle a,c,x\r\rangle$, which follows from $\l\langle a,b,x\r\rangle\-\l\langle a,c,x\r\rangle\-a$, since $x\-\l\langle a,b,x\r\rangle\-a$ by definition. Indeed, we have $\l\langle a,c,x\r\rangle$ is in both $[a,x]$ and $[a,c]\subseteq[a,b]$, and since $\proj_{\l\{b,x\r\}}(a)=\l\langle a,b,x\r\rangle$, we have again by Lemma \ref{lem:projections} that $[\l\langle a,b,x\r\rangle,a]=[a,x]\cap[a,b]\ni\l\langle a,c,x\r\rangle$.
        \end{itemize}
        Finally, take $z\in X$ and consider $w\coloneqq\l\langle x,y,z\r\rangle\subseteq[x,y]$. Either $w=x$ or $w=y$ (but not both), giving us the desired partition.
    \end{proof}

    \begin{remark}\label{rem:projection_between_point_in_convex}
        In particular, this shows that if $xGy$, then $\cone_x(y)\in\mc{H}^\ast_\mathrm{cvx}(X)$. The convexity of cones also shows, in the situation of Lemma \ref{lem:projections}, that $\proj_A=\proj_{\cvx(A)}$, i.e., $\proj_A(x)$ is also between $x$ and every point in $\cvx(A)$: indeed, note that $\cone_x(\proj_A(x))$ is convex and contains $A$, so it contains $\cvx(A)$ too.
    \end{remark}

    \begin{lemma}\label{lem:projection_homomorphism}
        $\proj_A:X\onto\cvx(A)$ is a median homomorphism with $\proj_A\circ\cvx=\cvx\circ\proj_A$.
    \end{lemma}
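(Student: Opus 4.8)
The plan is to prove the three assertions separately. Set $C\coloneqq\cvx(A)$ and $\pi\coloneqq\proj_A$; by Remark~\ref{rem:projection_between_point_in_convex} we have $\pi=\proj_C$, and $\pi(u)$ lies between $u$ and every point of $C$ --- call this the \emph{gate property}: $u\-\pi(u)\-c$ for all $u\in X$ and $c\in C$. Surjectivity onto $C$ is then immediate: for $a\in C$ the point $a$ itself is the unique element of $C$ lying between $a$ and every point of $A$, so $\pi(a)=a$ by the uniqueness clause of Lemma~\ref{lem:projections}; hence $\pi$ restricts to the identity on $C$, and $\pi(X)=C$.

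For the homomorphism property, I would read off when a projection lies in a cone. Recall that for each edge $xGy$ the cone $H\coloneqq\cone_x(y)$ is convex with convex complement $H^c=\cone_y(x)$ (Lemma~\ref{lem:cones_are_convex}, Remark~\ref{rem:projection_between_point_in_convex}); that these cones separate points, since two distinct vertices lie on opposite sides of the cone over any edge on a geodesic between them; and that, since $\langle u,v,w\rangle$ lies on each of $[u,v],[v,w],[u,w]$, convexity of $H$ and of $H^c$ yields $\langle u,v,w\rangle\in H$ iff at least two of $u,v,w$ lie in $H$. The crucial step is to show, for any $u\in X$ and any such cone $H$,
\begin{equation*}
    \pi(u)\in H\quad\Longleftrightarrow\quad (C\subseteq H)\ \text{ or }\ (C\cap H\neq\em\ \text{ and }\ u\in H).
\end{equation*}
Indeed, if $C\subseteq H$ then $\pi(u)\in C\subseteq H$; if $C\cap H\neq\em$ and $u\in H$, then choosing $c\in C\cap H$ the gate property puts $\pi(u)\in[u,c]\subseteq H$ by convexity of $H$; and conversely, if $\pi(u)\in H$ but $C\not\subseteq H$, then $\pi(u)\in C\cap H\neq\em$, and choosing $c\in C\cap H^c$ the gate property together with convexity of $H^c$ forces $u\notin H^c$. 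Granting this, fix $x,y,z\in X$, put $m\coloneqq\langle x,y,z\rangle$ and $n\coloneqq\langle\pi x,\pi y,\pi z\rangle$, and verify $\pi(m)\in H\Leftrightarrow n\in H$ for every cone $H$ by splitting into the cases $C\subseteq H$, $C\subseteq H^c$, and $C\cap H\neq\em\neq C\cap H^c$; in each one the displayed equivalence and the ``two out of three'' criterion make the two sides coincide. Since cones separate points, $\pi(m)=n$, so $\pi$ is a median homomorphism. I expect this casework to be the main obstacle, in that it is where the geometry of the median graph actually gets used.

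Finally, for $\proj_A\circ\cvx=\cvx\circ\proj_A$, I would first note that $\pi$-preimages of convex sets are convex: if $u,w\in\pi^{-1}(D)$ and $v\in[u,w]$, then $\pi(v)=\langle\pi u,\pi v,\pi w\rangle\in[\pi u,\pi w]\subseteq D$ since $\pi$ is a median homomorphism and $D$ is convex. Next, $\pi$-images of convex sets are convex: given convex $C'$, take $u,v\in C'$ and $t\in[\pi u,\pi v]$; then $t\in C$ by convexity of $C$, so $\pi(t)=t$, whence $t=\langle\pi u,\pi v,t\rangle=\langle\pi u,\pi v,\pi t\rangle=\pi\langle u,v,t\rangle$ with $\langle u,v,t\rangle\in[u,v]\subseteq C'$, i.e.\ $t\in\pi(C')$. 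Now for any $S\subseteq X$: the convex set $\cvx(\proj_A S)$ has convex $\pi$-preimage containing $S$, hence containing $\cvx(S)$, so $\proj_A(\cvx S)\subseteq\cvx(\proj_A S)$; and $\proj_A(\cvx S)$ is a convex set containing $\proj_A(S)$, so $\cvx(\proj_A S)\subseteq\proj_A(\cvx S)$, giving the desired equality.
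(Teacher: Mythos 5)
Your proof is correct, and it takes a genuinely different route from the paper on both halves of the lemma. For the homomorphism property, the paper argues via intervals: it sets $w\coloneqq\l\langle\proj_A(x),\proj_A(y),\proj_A(z)\r\rangle$ and shows $y\-w\-a$ for all $a\in A$ (using the interval identities of Lemma~\ref{lem:projections} and Remark~\ref{rem:projection_between_point_in_convex}), which pins $w$ down as $\proj_A(y)$. You instead run a half-space argument: you characterize exactly when $\proj_A(u)$ lands in a cone $H$ in terms of $u$ and the position of $C=\cvx(A)$ relative to $H$, use the ``two out of three'' rule for medians versus convex/co-convex half-spaces, and conclude that $\proj_A\l\langle x,y,z\r\rangle$ and $\l\langle\proj_A x,\proj_A y,\proj_A z\r\rangle$ lie on the same side of every cone, hence coincide because cones separate points. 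This is the classical wallspace-style proof; it is somewhat longer but isolates a reusable characterization of how projections interact with half-spaces. For the commutation $\proj_A\circ\cvx=\cvx\circ\proj_A$, the paper derives it in one line from Remark~\ref{rem:projections} (a median homomorphism sends $\cvx(B)=\proj_B(X)$ to $\proj_{f(B)}(f(X))=\cvx(f(B))$); you instead prove that $\proj_A$-preimages and $\proj_A$-images of convex sets are convex and then argue by double inclusion. Your route is more elementary and self-contained, whereas the paper's is slicker but leans on the iterated-median description of $\proj_A$ recorded in Remark~\ref{rem:projections}. Both arguments only use material established before this lemma, so there is no circularity.
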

    \begin{proof}
        The second claim follows from the first since, by Remark \ref{rem:projections}, we have
        \begin{equation*}
            f(\cvx(B))=f(\proj_B(X))=\proj_{f(B)}(f(X))=\cvx(f(B))
        \end{equation*}
        for all median homomorphisms $f:X\onto Y$ and $B\subseteq X$, so it in particular applies to $f\coloneqq\proj_A$.

        To this end, let $x\-y\-z\in X$ and set $w\coloneqq\l\langle\proj_A(x),\proj_A(y),\proj_A(z)\r\rangle\in\cvx(A)$. It suffices to show that $y\-w\-a$ for all $a\in A$, for then $w=\proj_A(y)$ and hence $\proj_A(x)\-\proj_A(y)\-\proj_A(z)$. But we have $y\-\proj_A(y)\-a$ already, so it further suffices to show that $y\-w\-\proj_A(y)$. For this, we note that
        \begin{equation*}
            x\-\proj_A(x)\-\proj_A(y)\ \ \ \ \mathrm{and}\ \ \ \ \proj_A(x)\-w\-\proj_A(y),
        \end{equation*}
        so $x\-w\-\proj_A(y)$, and similarly $z\-w\-\proj_A(y)$. Thus, it follows that
        \begin{equation*}
            \begin{aligned}
                w\in[\proj_A(y),x]\cap[\proj_A(y),z]&=[\proj_A(y),\proj_{\l\{x,z\r\}}(\proj_A(y))]\ \ \ \ && \textrm{Lemma \ref{lem:projections}} \\
                                                    &=[\proj_A(y),\proj_{\l[x,z\r]}(\proj_A(y))]\ \ \ \ &&\textrm{Remark \ref{rem:projection_between_point_in_convex}} \\
                                                    &\subseteq[\proj_A(y),y],\ \ \ \ && \textrm{Lemma \ref{lem:projections}}
            \end{aligned}
        \end{equation*}
        where the second equality follows from $\cvx(\l\{x,z\r\})=[x,z]$, and hence $\proj_{\l\{x,z\r\}}=\proj_{\l[x,z\r]}$.
    \end{proof}

    \subsection{Convex half-spaces of median graphs}\label{sec:convex_half-spaces_of_median_graphs}

    We now use projections to explore the geometry of convex half-spaces in a median graph $(X,G)$. For the axiomatics of convex structures, see \cite{vdV93}.

    For a convex co-convex half-space $H\in\mc{H}^\ast_\textrm{cvx}(X)$, we call the inward edge boundary $\del_\mathsf{ie}H$ a \textit{hyperplane}.

    \begin{proposition}\label{prp:half-spaces_are_cones}
        Each edge $(x,y)\in G$ is on a unique hyperplane, namely the inward boundary of $\cone_x(y)$, and conversely, each half-space $H\in\mc{H}^\ast_\mathrm{cvx}(X)$ is $\cone_x(y)$ for every $(x,y)\in\del_\mathsf{ie}H$.

        Thus, hyperplanes are equivalence classes of edges. Furthermore, this equivalence relation is generated by parallel sides of squares (i.e., $4$-cycles).
    \end{proposition}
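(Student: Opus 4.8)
The plan is to first pin down the half-spaces through a given edge, then read off the partition statement, and finally analyze the resulting equivalence relation through the internal geometry of the vertex boundary $\del_\mathsf{iv}H$.

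For the two displayed assertions, fix an edge $(x,y)\in G$. By Remark~\ref{rem:projection_between_point_in_convex}, $\cone_x(y)$ is a non-trivial convex co-convex half-space, with complement $\lnot\cone_x(y)=\cone_y(x)$ by Lemma~\ref{lem:cones_are_convex}; moreover $y\in\cone_x(y)$ since $y\in[x,y]$, while $x\notin\cone_x(y)$ since $x\in\cone_x(y)$ would give $y\in[x,x]=\{x\}$, forcing $x=y$. As $xGy$, this puts $x\in\del_\mathsf{ov}\cone_x(y)$ and $y\in\del_\mathsf{iv}\cone_x(y)$, so $(x,y)\in\del_\mathsf{ie}\cone_x(y)$. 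Conversely let $(x,y)\in\del_\mathsf{ie}H$ with $H\in\mc{H}^\ast_\mathrm{cvx}(X)$, so $xGy$, $y\in H$, $x\notin H$. Since $H$ is convex, $\proj_H(x)$ lies between $x$ and $y$ by Lemma~\ref{lem:projections}, hence in $[x,y]=\{x,y\}$, and as $\proj_H(x)\in H\not\ni x$ we get $\proj_H(x)=y$; so (again by Lemma~\ref{lem:projections}) $y$ lies between $x$ and every point of $H$, i.e.\ $H\subseteq\cone_x(y)$. Symmetrically $\proj_{\lnot H}(y)=x$, whence $\lnot H\subseteq\cone_y(x)$, and since $X=H\sqcup\lnot H=\cone_x(y)\sqcup\cone_y(x)$ these inclusions force $H=\cone_x(y)$. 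This proves both displays, and shows that $(x,y)$ lies on exactly one hyperplane, namely $\del_\mathsf{ie}\cone_x(y)$; consequently the fibres of $(x,y)\mapsto\del_\mathsf{ie}\cone_x(y)$ partition $G$ into the hyperplanes.

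For the last claim I would show this equivalence relation equals the transitive closure of the relation ``$e,e'$ are opposite sides of a common $4$-cycle''. One containment is direct: in a $4$-cycle $abcd$ — chordless, as median graphs are triangle-free — we have $d(a,c)=2$ with $b\in[a,c]$ but $b\notin[a,d]=\{a,d\}$, so $H:=\cone_a(b)$ satisfies $b,c\in H$ and $a,d\notin H$; then $(d,c)\in\del_\mathsf{ie}H$, and the previous paragraph gives $\cone_d(c)=H=\cone_a(b)$, so $\{a,b\}$ and $\{c,d\}$ lie on the same hyperplane, and symmetrically $\{a,d\}$ and $\{b,c\}$. For the reverse containment, fix $H\in\mc{H}^\ast_\mathrm{cvx}(X)$; the key point is that the induced subgraph on $\del_\mathsf{iv}H$ is connected. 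Granting this, given $(x,y),(x',y')\in\del_\mathsf{ie}H$ (so $y,y'\in\del_\mathsf{iv}H$), pick a path $y=v_0,\dots,v_n=y'$ inside $\del_\mathsf{iv}H$ and let $u_i\in\lnot H$ be the unique neighbour of $v_i$ there (unique, since two such would give $\proj_{\lnot H}(v_i)\in[v_i,u]\cap[v_i,u']=\{v_i\}\subseteq\lnot H$, absurd). Then $u_0=x$, $u_n=x'$, each $(u_i,v_i)\in\del_\mathsf{ie}H$, and applying the graph homomorphism $\proj_{\lnot H}$ to the edge $v_iGv_{i+1}$ gives $u_iGu_{i+1}$ — distinct, by uniqueness of neighbours — so $v_iv_{i+1}u_{i+1}u_i$ is a $4$-cycle with $\{u_i,v_i\}$ and $\{u_{i+1},v_{i+1}\}$ opposite sides; chaining these links $(x,y)$ to $(x',y')$ by square moves.

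The crux is thus the connectivity of $\del_\mathsf{iv}H$, which I would get by exhibiting it as the image of the convex — hence connected — set $H$ under the graph homomorphism $g:H\to X$, $v\mapsto\proj_H(\proj_{\lnot H}(v))$ (a composite of median homomorphisms by Lemma~\ref{lem:projection_homomorphism}), since graph homomorphisms carry connected subgraphs to connected ones. It then suffices to check $g(H)=\del_\mathsf{iv}H$. For $\del_\mathsf{iv}H\subseteq g(H)$: if $v\in\del_\mathsf{iv}H$ has a neighbour $u\in\lnot H$, then $\proj_{\lnot H}(v)\in[v,u]\cap\lnot H=\{u\}$ and $\proj_H(u)\in[u,v]\cap H=\{v\}$, so $g$ fixes $v$. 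For $g(H)\subseteq\del_\mathsf{iv}H$: the gate $\proj_{\lnot H}(v)$ of any $v\in H$ is a closest point of $\lnot H$, so a geodesic from $v$ to it meets $\lnot H$ only at its endpoint, forcing the preceding vertex to lie in $H$; hence $\proj_{\lnot H}(v)\in\del_\mathsf{iv}\lnot H$, and $\proj_H$ carries $\del_\mathsf{iv}\lnot H$ into $\del_\mathsf{iv}H$ by the same ``$[u,w]=\{u,w\}$'' argument. I expect this last point — that the gate of an $H$-point is adjacent to $H$, keeping the image of $g$ on the boundary — to be the only genuinely delicate step; everything else is routine manipulation of Lemmas~\ref{lem:projections}, \ref{lem:cones_are_convex}, and~\ref{lem:projection_homomorphism}.
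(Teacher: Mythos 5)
Your proof is correct, and in both halves it takes a genuinely different route from the paper. For $H=\cone_x(y)$ given $(x,y)\in\del_\mathsf{ie}H$, the paper argues by contradiction (a point of $H\comp\cone_x(y)$ would put $x\in H$ by convexity, etc.), whereas you compute $\proj_H(x)=y$ via $[x,y]=\{x,y\}$ and Lemma~\ref{lem:projections}, then symmetrize to trap $H$ between $\cone_x(y)$ and $\lnot\cone_y(x)$. For the ``generated by squares'' claim, the paper works on the \emph{outer} boundary: it notes $\del_\mathsf{ov}H=\proj_{\lnot H}(H)$ is \emph{convex} (image of a convex set under a median homomorphism, Lemma~\ref{lem:projection_homomorphism}), so geodesics between boundary points stay there, and pairs one such geodesic with its $\proj_H$-mirror in $\del_\mathsf{iv}H$ to get a strip of squares. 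You instead establish the weaker fact that $\del_\mathsf{iv}H$ is \emph{connected}, by exhibiting it as the image of $H$ under the composite retraction $g=\proj_H\circ\proj_{\lnot H}$, and then chain arbitrary paths rather than geodesics. That works, but the $g$-detour is unnecessary: the single projection $\proj_H(\lnot H)=\del_\mathsf{iv}H$ already identifies your target as the image of a convex set and spares you the two-sided verification $g(H)=\del_\mathsf{iv}H$; it also hands you convexity for free, exactly as the paper exploits.

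One terminological caution: $\proj_A$ (and so $g$) is a median homomorphism, not a graph homomorphism --- it may collapse an edge to a single vertex. What you actually use, namely that it sends connected sets to connected sets and adjacent vertices to adjacent-or-equal vertices, follows from median homomorphisms being $1$-Lipschitz, which deserves a word since the paper only records the median-operation-preserving property in Lemma~\ref{lem:projection_homomorphism}. Your parenthetical ``distinct, by uniqueness of neighbours'' for $u_i\neq u_{i+1}$ is fine provided one reads it as the dual uniqueness (of the $H$-neighbour of a point in $\del_\mathsf{ov}H$, or equivalently triangle-freeness of median graphs), since the uniqueness you explicitly proved runs in the other direction.
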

    \begin{proof}
        We have $\cone_x(y)\in\mc{H}_\mathrm{cvx}^\ast(X)$ by Lemma \ref{lem:cones_are_convex}, and clearly $(x,y)\in\del_\mathsf{ie}\cone_x(y)$. Conversely, take $H\in\mc{H}_\mathrm{cvx}^\ast(X)$ and any $(x,y)\in\del_\mathsf{ie}H$. Then $H=\cone_x(y)$, for if $z\in H\cap\lnot\cone_x(y)$, then $z\in\cone_y(x)$, and hence $x\in[y,z]\subseteq H$ by convexity of $H$, a contradiction; if $z\in\cone_x(y)\cap\lnot H$, then $[x,z]\subseteq\lnot H$ by convexity of $\lnot H$, and hence $y\not\in H$, a contradiction.

        Finally, parallel edges of a strip of squares generate the same hyperplane since, for a given square, each vertex is between its neighbors and hence any hyperplane containing an edge contains its opposite edge. On the other hand, let $(a,b),(c,d)\in\del_\mathsf{ie}H$ for some $H\in\mc{H}_\mathrm{cvx}^\ast(X)$. Note that $\del_\mathsf{ov}H=\proj_{\lnot H}(H)$ is convex since $H$ is, and since $\proj_{\lnot H}$ preserves convexity by Lemma \ref{lem:projection_homomorphism}, any geodesic between $a,c\in\del_\mathsf{ov}H$ lies in $\del_\mathsf{ov}H$. Matching this geodesic via $\del_\mathsf{ie}H:\del_\mathsf{ov}H\to\del_\mathsf{iv}H$ gives us a geodesic between $b,d$ in $\del_\mathsf{iv}H$, which together with the matching forms the desired strip of squares.
    \end{proof}

    \begin{corollary}\label{cor:non-nested_iff_embedding_of_hamming}
        Two half-spaces $H,K\in\mc{H}_\mathrm{cvx}^\ast(X)$ are non-nested iff there is an embedding $\l\{0,1\r\}^2\into X$ of the Hamming cube into the four corners $\lnot^iH\cap\lnot^jK$.

        In particular, if $H,K\in\mc{H}_\mathrm{cvx}^\ast(X)$ are non-nested, then $\del_\mathsf{v}H\cap\del_\mathsf{v}K\neq\em$.
    \end{corollary}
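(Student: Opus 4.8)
The direction $(\Leftarrow)$ is immediate: if $f\colon\l\{0,1\r\}^2\into X$ is an embedding with $f(i,j)\in\lnot^iH\cap\lnot^jK$, then every corner $\lnot^iH\cap\lnot^jK$ is nonempty, so $H$ and $K$ are non-nested by Definition \ref{def:nested}. So suppose conversely that $H,K\in\mc{H}_\mathrm{cvx}^\ast(X)$ are non-nested, i.e.\ all four corners $A_{ij}\coloneqq\lnot^iH\cap\lnot^jK$ are nonempty; each $A_{ij}$ is then convex, being the intersection of the convex sets $\lnot^iH$ and $\lnot^jK$, hence connected. Note also that $\l\{H,\lnot H\r\}\neq\l\{K,\lnot K\r\}$ (since $H$ is nested with both $H$ and $\lnot H$), so the hyperplanes $\del_\mathsf{ie}H$ and $\del_\mathsf{ie}K$ are distinct by Proposition \ref{prp:half-spaces_are_cones}; since every edge lies on a unique hyperplane, no edge crossing $H$ also crosses $K$, and vice versa.

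The plan is to produce a square straddling both hyperplanes by walking along $\del_\mathsf{ie}H$ from ``outside $K$'' to ``inside $K$''. Since $\lnot K$ is connected and meets both $H$ and $\lnot H$, it contains an edge $\l\{a,a'\r\}$ with $a\in H$ and $a'\in\lnot H$; automatically $a\in\del_\mathsf{iv}H$ and $a'\in\del_\mathsf{ov}H$, so this edge lies on $\del_\mathsf{ie}H$. Likewise, connectedness of $K$ furnishes an edge $\l\{b,b'\r\}$ on $\del_\mathsf{ie}H$ with both endpoints in $K$. Now I would invoke (the proof of) Proposition \ref{prp:half-spaces_are_cones}: matching a geodesic in the convex set $\del_\mathsf{ov}H$ from $a'$ to $b'$ across $\del_\mathsf{ie}H$ yields a strip of squares joining these two edges, that is, a path $a=y_0,\dots,y_n=b$ in $\del_\mathsf{iv}H$ and a path $a'=y_0',\dots,y_n'=b'$ in $\del_\mathsf{ov}H$ with each $\l\{y_k,y_k'\r\}$ on $\del_\mathsf{ie}H$ and each $y_k\,y_k'\,y_{k+1}'\,y_{k+1}$ a $4$-cycle. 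Since $y_0=a\in\lnot K$ while $y_n=b\in K$, there is an index $i$ with $y_i\in\lnot K$ and $y_{i+1}\in K$, so the edge $\l\{y_i,y_{i+1}\r\}$ crosses $K$; by Proposition \ref{prp:half-spaces_are_cones} the opposite edge $\l\{y_i',y_{i+1}'\r\}$ of that square lies on the same hyperplane $\del_\mathsf{ie}K$, hence crosses $K$ too.

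It then remains to check that the square $S\coloneqq y_i\,y_i'\,y_{i+1}'\,y_{i+1}$ has exactly one vertex in each corner. We have $y_i,y_{i+1}\in\del_\mathsf{iv}H\subseteq H$ and $y_i',y_{i+1}'\in\del_\mathsf{ov}H\subseteq\lnot H$; moreover the edges $\l\{y_i,y_i'\r\}$ and $\l\{y_{i+1},y_{i+1}'\r\}$ lie on $\del_\mathsf{ie}H$ and so, by the previous paragraph, do not cross $K$, forcing $y_i,y_i'$ onto the same side of $K$ (namely $\lnot K$) and $y_{i+1},y_{i+1}'$ onto the same side (namely $K$). Hence $y_{i+1}\in A_{00}$, $y_i\in A_{01}$, $y_{i+1}'\in A_{10}$, and $y_i'\in A_{11}$, so the assignment $(i,j)\mapsto$ the vertex of $S$ lying in $A_{ij}$ is a well-defined injection with the required images; it is a graph embedding because the four edges of $\l\{0,1\r\}^2$ map to the four sides of the $4$-cycle $S$ while its two non-edges map to the two diagonals of $S$, which are non-edges since a median graph is triangle-free (the median of the three vertices of a triangle would have to lie in the intersection of three $2$-element intervals, which is empty). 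Finally, for the ``in particular'' clause: $y_{i+1}\in A_{00}$ is adjacent to $y_i\in\lnot K$ and to $y_{i+1}'\in\lnot H$, whence $y_{i+1}\in\del_\mathsf{iv}H\cap\del_\mathsf{iv}K\subseteq\del_\mathsf{v}H\cap\del_\mathsf{v}K$.

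The one step I expect to require care is the use of Proposition \ref{prp:half-spaces_are_cones}: one must extract from its proof not merely that edges on a common hyperplane are square-equivalent, but the concrete strip whose ``inner'' and ``outer'' rails are genuine paths inside $\del_\mathsf{iv}H$ and $\del_\mathsf{ov}H$ respectively --- this is what makes the constituent squares non-degenerate and lets each $H$-crossing side of $S$ be pinned to a single side of $K$. Everything else is the bookkeeping of which corner each of the four vertices lands in, which the distinctness of the two hyperplanes (so that a single edge cannot cross both $H$ and $K$) renders routine.
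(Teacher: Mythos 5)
Your argument is correct and is essentially the same as the paper's: find two edges on one of the two hyperplanes (you use $\del_\mathsf{ie}H$, the paper uses $\del_\mathsf{oe}K$; the two are interchangeable by symmetry), one edge with both endpoints on each side of the other half-space, and then slide along the strip of squares from Proposition \ref{prp:half-spaces_are_cones} until the crossing is detected. Your write-up fills in several details the paper leaves implicit (the explicit indexing of the strip's rails inside $\del_\mathsf{iv}H$ and $\del_\mathsf{ov}H$, the use of uniqueness of hyperplanes to pin each rail edge to a single side of $K$, and triangle-freeness to verify the diagonals of the square are non-edges), but the underlying strategy is identical.
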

    \begin{proof}
        Let $H,K$ be non-nested and take $x_1\in H\cap K$ and $x_2\in H\cap\lnot K$. Since $H$ is connected, any geodesic between $x_1,x_2$ crosses an edge $(x_1',x_2')\in\del_\mathsf{oe}K$ in $H$. Similarly, there is an edge $(y_1',y_2')\in\del_\mathsf{oe}K$ in $\lnot H$, so we may slide both edges along $\del_\mathsf{oe}K$ to obtain the desired square (see Proposition \ref{prp:half-spaces_are_cones}).

        Conversely, the half-spaces cutting the square are clearly non-nested.
    \end{proof}

    \begin{lemma}[Helly]\label{lem:helly}
        Any finite intersection of pairwise-intersecting non-empty convex sets is non-empty.
    \end{lemma}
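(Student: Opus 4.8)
The plan is to induct on the number of sets, with the median operation carrying the entire load in the base case of three sets. For a single set the claim is trivial, and for two sets it is exactly the pairwise-intersection hypothesis. For three pairwise-intersecting convex sets $A_1,A_2,A_3$, pick $x_1\in A_2\cap A_3$, $x_2\in A_1\cap A_3$, and $x_3\in A_1\cap A_2$, and consider the median $m\coloneqq\l\langle x_1,x_2,x_3\r\rangle$. Since $x_2,x_3\in A_1$ and $A_1$ is convex, $m\in[x_2,x_3]\subseteq A_1$; symmetrically $m\in[x_1,x_3]\subseteq A_2$ and $m\in[x_1,x_2]\subseteq A_3$, so $m\in A_1\cap A_2\cap A_3\neq\em$.

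For the inductive step, assume the statement for every collection of $n\geq3$ pairwise-intersecting convex sets and let $A_1,\dots,A_{n+1}$ be $n+1$ such sets. Fuse the last two: put $A_n'\coloneqq A_n\cap A_{n+1}$, which is convex as an intersection of convex sets and non-empty by hypothesis. Now $A_1,\dots,A_{n-1},A_n'$ is a collection of $n$ convex sets; to apply the induction hypothesis I must check it is pairwise-intersecting. For indices $i,j\leq n-1$ this holds by assumption, and for $i\leq n-1$ we have $A_i\cap A_n'=A_i\cap A_n\cap A_{n+1}$, which is non-empty by the three-set case applied to the pairwise-intersecting triple $A_i,A_n,A_{n+1}$. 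The induction hypothesis then gives $\bigcap_{i=1}^{n-1}A_i\cap A_n'=\bigcap_{i=1}^{n+1}A_i\neq\em$, completing the induction.

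There is no real obstacle here beyond stating the base case carefully; the one point worth flagging is that the induction does not simply propagate "pairwise-intersecting" unchanged — it re-establishes pairwise intersection after merging two sets by invoking the three-set case, which is precisely the mechanism that makes the Helly number equal to $2$ in a median graph rather than $3$ as in Euclidean space. The only facts used are the existence of the median and the elementary observation that $[x,y]\subseteq A\cap B$ whenever $x,y\in A\cap B$ with $A,B$ convex, both of which are immediate from the definitions.
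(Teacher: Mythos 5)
Your proof is correct and takes essentially the same route as the paper: the three-set base case via the median, then an inductive reduction by intersecting to drop from $n+1$ to $n$ sets, re-establishing pairwise intersection with the three-set case. The only cosmetic difference is that you merge just $A_n$ and $A_{n+1}$, whereas the paper replaces each $H_i$ by $H_i\cap H_{n+1}$; the mechanism is identical.
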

    \begin{proof}
        For pairwise-intersecting convex sets $H_1,H_2,H_3$, pick any $x\in H_1\cap H_2$, $y\in H_1\cap H_3$ and $z\in H_2\cap H_3$; their median $\l\langle x,y,z\r\rangle$ then lies in $H_1\cap H_2\cap H_3$.

        Suppose that it holds for some $n\geq 3$ and let $H_1,\dots,H_{n+1}\subseteq X$ pairwise-intersect. Then $\l\{H_i\cap H_{n+1}\r\}_{i\leq n}$ is a family of $n$ pairwise-intersecting convex sets, so $\bigcap_{i\leq n+1}H_i=\bigcap_{i\leq n}(H_i\cap H_{n+1})$ is non-empty.
    \end{proof}

    Lastly, we have some useful finiteness conditions on convex half-spaces; the former implies that $\mc{H}_\mathrm{cvx}(X)$ is finitely-separating, and the latter allows us to replace finite sets with their convex hulls.

    \begin{lemma}\label{lem:half_space_separating_convex}
        Any two disjoint convex sets $\em\neq A,B\subseteq X$ can be separated by a half-space $A\subseteq H\subseteq\lnot B$, and furthermore we have $d(A,B)=|\!\l\{H\in\mc{H}_\mathrm{cvx}(X)\st A\subseteq H\subseteq\lnot B\r\}\!|$.
    \end{lemma}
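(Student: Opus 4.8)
The plan is to extract the separating convex half-spaces from a single geodesic joining a closest pair. Since $A,B\neq\em$, pick $a^*\in A$ and $b^*\in B$ with $d(a^*,b^*)=d(A,B)$, write $n\coloneqq d(A,B)$, and note $n\geq1$ as $A\cap B=\em$. The first observation I would make is that any such distance-realizing pair is automatically a pair of mutual ``gates'': we have $d(b^*,A)=n$ (since $d(A,B)\leq d(b^*,A)\leq d(a^*,b^*)$), and by Lemma \ref{lem:projections} the point $\proj_A(b^*)$ lies on a geodesic from $b^*$ to $a^*$ while realizing $d(b^*,A)=n=d(a^*,b^*)$, so $\proj_A(b^*)=a^*$; symmetrically $\proj_B(a^*)=b^*$. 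Then I would fix a geodesic $a^*=x_0,x_1,\dots,x_n=b^*$ and note that each $x_i$ lies on $[b^*,\proj_A(b^*)]$ and on $[a^*,\proj_B(a^*)]$, so Lemma \ref{lem:projections} upgrades the gate property to $\proj_A(x_i)=a^*$ and $\proj_B(x_i)=b^*$ for all $i$.

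For $0\leq i<n$, set $H_i\coloneqq\cone_{x_{i+1}}(x_i)\in\mc{H}^\ast_\mathrm{cvx}(X)$ (Remark \ref{rem:projection_between_point_in_convex}); since $x_j\in H_i\Leftrightarrow x_{i+1}\-x_i\-x_j\Leftrightarrow j\leq i$, the $H_i$ are pairwise distinct. To check $A\subseteq H_i$, fix $a\in A$: from $\proj_A(x_i)=\proj_A(x_{i+1})=a^*$ and Lemma \ref{lem:projections}, $d(x_i,a)=i+d(a^*,a)$ and $d(x_{i+1},a)=d(x_i,a)+1$, i.e.\ $x_{i+1}\-x_i\-a$, so $a\in\cone_{x_{i+1}}(x_i)=H_i$. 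Symmetrically, using $\proj_B(x_i)=\proj_B(x_{i+1})=b^*$, for $b\in B$ one gets $d(x_{i+1},b)=d(x_i,b)-1$, i.e.\ $x_i\-x_{i+1}\-b$, so $b\in\cone_{x_i}(x_{i+1})=\lnot H_i$; hence $B\subseteq\lnot H_i$, i.e.\ $H_i\subseteq\lnot B$. This proves the existence assertion and gives $|\{H\in\mc{H}_\mathrm{cvx}(X):A\subseteq H\subseteq\lnot B\}|\geq n$.

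For the reverse inequality, let $H\in\mc{H}_\mathrm{cvx}(X)$ with $A\subseteq H\subseteq\lnot B$; then $H$ is non-trivial (as $A,B\neq\em$), $x_0=a^*\in H$, and $x_n=b^*\notin H$. Convexity of $H$ forces $\{j:x_j\in H\}$ to be an initial segment $\{0,\dots,i\}$ with $i<n$: if $x_j,x_k\in H$ with $j<k$, then the subpath $x_j,\dots,x_k$ is a geodesic, hence contained in $[x_j,x_k]\subseteq H$. Therefore $(x_{i+1},x_i)\in\del_\mathsf{ie}H$, and Proposition \ref{prp:half-spaces_are_cones} forces $H=\cone_{x_{i+1}}(x_i)=H_i$. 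Hence the separating convex half-spaces are precisely $H_0,\dots,H_{n-1}$, and there are $n=d(A,B)$ of them.

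I expect the crux to be conceptual rather than computational: the point is to notice that a distance-realizing pair is already a pair of gates, which is exactly what keeps $\proj_A$ and $\proj_B$ constant along the entire geodesic and makes the two distance identities fall out of Lemma \ref{lem:projections}. One also has to keep the orientation of cones straight --- the proof of Proposition \ref{prp:half-spaces_are_cones} records $(x,y)\in\del_\mathsf{ie}\cone_x(y)$, so $\cone_{x_{i+1}}(x_i)$ is the side of the edge $(x_i,x_{i+1})$ lying ``towards $a^*$'', which is what the computations above need. Given this, the exhaustiveness step is an immediate application of Proposition \ref{prp:half-spaces_are_cones}.
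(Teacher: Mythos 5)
Your proof is correct and takes essentially the same approach as the paper's: pick a geodesic of length $n=d(A,B)$ between $A$ and $B$, identify the separating convex half-spaces with the cones $\cone_{x_{i+1}}(x_i)$ cut by its $n$ edges, and invoke Proposition~\ref{prp:half-spaces_are_cones} for exhaustiveness. The paper verifies $A\subseteq\cone_{x_{i+1}}(x_i)$ via a chain of cone inclusions rather than your explicit gate property and projection-constancy argument, but the two routes are only cosmetically different.
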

    \begin{proof}
        Pick a geodesic $A\ni x_0Gx_1G\cdots Gx_n\in B$, where $n\coloneqq d(A,B)$. Then $H\coloneqq\cone_{x_1}(x_0)$, which is a half-space by Lemma \ref{lem:cones_are_convex}, separates $A,B$ since $x_0=\proj_A(x_n)$, and thus we have $A\subseteq\cone_{x_n}(x_0)\subseteq\cone_{x_1}(x_0)$ and $B\subseteq\cone_{x_0}(x_n)\subseteq\cone_{x_0}(x_1)$.

        Moreover, each such half-space $A\subseteq H\subseteq\lnot B$ satisfies $x_i\in H\not\ni x_{i+1}$ for a unique $i<n$, and conversely each pair $(x_i,x_{i+1})$ has a unique half-space separating them, so we have the desired bijection.
    \end{proof}

    \begin{lemma}\label{lem:convex_of_finite_is_finite}
        Every interval $[x,y]$ is finite. More generally, if $A\subseteq X$ is finite, then so is $\cvx(A)$.
    \end{lemma}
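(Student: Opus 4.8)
The plan is to encode each vertex by the set of convex co-convex half-spaces separating it from a fixed basepoint, and to argue that, for vertices lying in $\cvx(A)$, only finitely many half-spaces can ever arise this way.

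Fix $a_0\in A$ (for an interval, recall $[x,y]=\cvx(\l\{x,y\r\})$ by Remark~\ref{rem:projections}, so take $a_0=x$), and for $z\in X$ put $S(z)\coloneqq\l\{H\in\mc{H}_\mathrm{cvx}(X)\st a_0\in H\not\ni z\r\}$. First I would check that $z\mapsto S(z)$ is injective on all of $X$: given $z\neq z'$, Lemma~\ref{lem:half_space_separating_convex} applied to the convex singletons $\l\{z\r\}$ and $\l\{z'\r\}$ yields $H\in\mc{H}_\mathrm{cvx}(X)$ with $z\in H\not\ni z'$, and then—according to whether $a_0\in H$ or $a_0\in\lnot H$—the half-space $H$ or $\lnot H$ lies in exactly one of $S(z),S(z')$.

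Next I would pin down the possible values of $S(z)$ for $z\in\cvx(A)$. If $H\in S(z)$ then $z\notin H$, so $A\not\subseteq H$ (otherwise convexity of $H$ would force $\cvx(A)\subseteq H\ni z$), whence $a_0\in H\not\ni a$ for some $a\in A$. Thus $S(z)\subseteq\mc{S}\coloneqq\bigcup_{a\in A}\l\{H\in\mc{H}_\mathrm{cvx}(X)\st a_0\in H\not\ni a\r\}$, and by Lemma~\ref{lem:half_space_separating_convex} (now applied to $\l\{a_0\r\}$ and $\l\{a\r\}$) each set in this union has size $d(a_0,a)$, so $|\mc{S}|\leq\sum_{a\in A}d(a_0,a)<\infty$ since $A$ is finite and $(X,G)$ is connected. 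Combining, $z\mapsto S(z)$ injects $\cvx(A)$ into $2^{\mc{S}}$, giving $|\cvx(A)|\leq 2^{|\mc{S}|}<\infty$; specializing to $A=\l\{x,y\r\}$ yields $|[x,y]|\leq 2^{d(x,y)}$.

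I do not expect a genuine obstacle here; the one point worth keeping in mind is that $(X,G)$ is not assumed locally finite, so merely proving that $\cvx(A)$ is bounded would not suffice—which is exactly why the argument tracks the finitely many relevant half-spaces rather than reasoning with distances alone.
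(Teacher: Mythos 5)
Your proof is correct, but it takes a genuinely different route from the paper's for the general (non-interval) case. For intervals, both you and the paper encode each $z\in[x,y]$ by the set of convex half-spaces separating $x$ from $z$ (there are $d(x,y)$ of them by Lemma~\ref{lem:half_space_separating_convex}), and this encoding is injective. For the full convex hull, however, the paper does not generalize this encoding: it instead invokes the explicit formula $\cvx(A)=\proj_A(X)$ from Remark~\ref{rem:projections}, writes every point of $\cvx(A)$ as a nested median $\l\langle x,x_n,\dots,\l\langle x,x_1,x_0\r\rangle\dots\r\rangle$, and concludes finiteness by induction using the interval case as the base. You, by contrast, extend the half-space encoding directly: you observe that any $H$ with $a_0\in H\not\ni z$ for $z\in\cvx(A)$ must fail to contain some $a\in A$ (by convexity of $H$), so $S(z)$ lives inside the finite set $\mc{S}=\bigcup_{a\in A}\l\{H:a_0\in H\not\ni a\r\}$, of size at most $\sum_{a\in A}d(a_0,a)$. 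Your route buys a uniform treatment of intervals and general hulls, an explicit cardinality bound $|\cvx(A)|\leq 2^{\sum_{a\in A}d(a_0,a)}$, and avoids relying on the iterated-median normal form; the paper's route is shorter given that Remark~\ref{rem:projections} has already been established and keeps the two cases (interval, general hull) conceptually layered. Your closing caveat is also well-taken: since no local-finiteness hypothesis is available here, arguing via boundedness alone would indeed be insufficient, and tracking half-spaces is the right mechanism.
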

    \begin{proof}
        The singletons $\l\{x\r\}$ and $\l\{y\r\}$ are convex, so there are finitely-many half-spaces $H\subseteq[x,y]$. But each $z\in[x,y]$ is determined uniquely by those half-spaces containing it, so $[x,y]$ is finite.

        Let $A\coloneqq\l\{x_0,\dots,x_n\r\}$. Since $\cvx(A)=\proj_A(X)$, we have by Remark \ref{rem:projections} that points in $\cvx(A)$ are of the form $\l\langle x,x_n,\dots,\l\langle x,x_2,\l\langle x,x_1,x_0\r\rangle\r\rangle\dots\r\rangle$, which is finite by induction using that intervals are finite.
    \end{proof}

    \section{Graphs with Dense Families of Cuts}\label{sec:graphs_with_dense_families_of_cuts}

    Let $(X,G)$ be a connected locally-finite quasi-tree, which, in the context of Theorem \ref{thm:treeing_quasi-trees}, stands for a single component of the locally-finite graphing of the CBER. For Theorem \ref{thm:component-wise_construction} to apply, we need to first identify a family of finitely-separating cuts therein, and we do so in such a way that the cuts are dense towards ends.

    Since $X$ is a quasi-tree, and thus does not have arbitrarily long cycles, we expect that there is some finite bound $R<\infty$ such that the ends in $\epsilon(X)$ are `limits' of cuts $\mc{H}\coloneqq\mc{H}_{\diam(\del)\leq R}(X)\cap\mc{H}_\mathrm{conn}(X)$ with boundary diameter bounded by $R$. We show that this is indeed the case, in the sense that $\mc{H}$ is dense towards ends.

    \begin{lemma}\label{lem:coarse_equivalence_controls_boundary_diameter}
        If $f:(X,G)\to(Y,T)$ is a coarse-equivalence between connected graphs, then $\diam(\del_\mathsf{v}f^{-1}(H))$ is uniformly bounded in terms of $\diam(\del_\mathsf{v}H)$ for any $H\in\mc{H}_{\del<\infty}(Y)$.
    \end{lemma}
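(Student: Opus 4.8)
The plan is to transport the boundary $\del_\mathsf{v}f^{-1}(H)$ forward along $f$ into a bounded neighbourhood of $\del_\mathsf{v}H$ in $Y$, and then to transport the resulting diameter estimate back to $X$ using the lower control of the coarse-equivalence. Fix non-decreasing control functions $\rho_-,\rho_+:[0,\infty)\to[0,\infty)$ with $\rho_-(t)\to\infty$ as $t\to\infty$ and
\[
    \rho_-\l(d_X(x,x')\r)\ \le\ d_Y\l(f(x),f(x')\r)\ \le\ \rho_+\l(d_X(x,x')\r)\qquad\textrm{for all }x,x'\in X,
\]
where $d_X,d_Y$ denote the respective path metrics; only these two inequalities (and neither coarse density of $f$, nor local-finiteness of $X$ or $Y$) will be used.

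First I would establish the containment $f\l(\del_\mathsf{v}f^{-1}(H)\r)\subseteq\Ball_{\rho_+(1)}\l(\del_\mathsf{v}H\r)$. Given $x\in\del_\mathsf{v}f^{-1}(H)$, since $f^{-1}(\lnot H)=\lnot f^{-1}(H)$, $\del_\mathsf{v}(\lnot H)=\del_\mathsf{v}H$, and $\del_\mathsf{ov}f^{-1}(H)=\del_\mathsf{iv}f^{-1}(\lnot H)$, we may replace $H$ by $\lnot H$ if necessary so as to assume $x\in\del_\mathsf{iv}f^{-1}(H)$; thus $f(x)\in H$ and $x$ has a neighbour $x'$ with $f(x')\in\lnot H$. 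Then $d_Y(f(x),f(x'))\le\rho_+(1)$ since $d_X(x,x')=1$, so any geodesic in $Y$ from $f(x)$ to $f(x')$ has length at most $\rho_+(1)$; the last vertex along it lying in $H$ then belongs to $\del_\mathsf{iv}H\subseteq\del_\mathsf{v}H$ and is at distance $\le\rho_+(1)$ from $f(x)$, which is the desired containment.

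Next, from this containment and the elementary estimate $\diam\l(\Ball_r(S)\r)\le\diam(S)+2r$, I would deduce $\diam_Y\l(f(\del_\mathsf{v}f^{-1}(H))\r)\le\diam_Y(\del_\mathsf{v}H)+2\rho_+(1)$, which is finite since $\del_\mathsf{v}H$ is (as $H\in\mc{H}_{\del<\infty}(Y)$). Finally, for any $x,x'\in\del_\mathsf{v}f^{-1}(H)$ the lower control gives
\[
    \rho_-\l(d_X(x,x')\r)\ \le\ d_Y\l(f(x),f(x')\r)\ \le\ \diam_Y(\del_\mathsf{v}H)+2\rho_+(1),
\]
so, $\rho_-$ being non-decreasing with $\rho_-(t)\to\infty$, the value $d_X(x,x')$ is bounded above by $\sup\l\{t\ge0\st\rho_-(t)\le\diam_Y(\del_\mathsf{v}H)+2\rho_+(1)\r\}<\infty$, a quantity depending only on $\diam_Y(\del_\mathsf{v}H)$ and on $f$. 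This is the asserted uniform bound on $\diam_X\l(\del_\mathsf{v}f^{-1}(H)\r)$; the degenerate case $H\in\l\{\em,Y\r\}$ is vacuous, since then every boundary in sight is empty.

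I do not expect a genuine obstacle here. The only two points needing a little care are the reduction of the outer-boundary case to the inner-boundary case (via $\lnot f^{-1}(H)=f^{-1}(\lnot H)$ and $\del_\mathsf{v}(\lnot H)=\del_\mathsf{v}H$), and making sense of ``inverting'' the monotone unbounded function $\rho_-$, which one simply does by taking the supremum above.
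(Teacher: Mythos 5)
Your proof is correct and follows essentially the same route as the paper: push $\del_\mathsf{v}f^{-1}(H)$ forward into $\Ball_{\rho_+(1)}(\del_\mathsf{v}H)$ using the upper coarse control (the paper's constant $S$ is your $\rho_+(1)$), bound the diameter of the image, then invoke the lower control $\rho_-$ to pull the bound back to $X$. The only difference is that you spell out the final ``coarse-equivalence'' step explicitly via the non-decreasing unbounded $\rho_-$, whereas the paper leaves it as a one-line remark.
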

    \begin{proof}
        Since $f$ is coarse, there exists $S<\infty$ be such that $xGx'$ implies $d(f(x),f(x'))\leq S$, so that for any $(x,x')\in\del_\mathsf{ie}f^{-1}(H)$, there is a path of length $\leq S$ between $f(x)\not\in H$ and $f(x')\in H$. Thus both $d(f(x),\del_\mathsf{v}H)$ and $d(f(x'),\del_\mathsf{v}H)$ are bounded by $S$, so $f(\del_\mathsf{v}f^{-1}(H))\subseteq\Ball_S(\del_\mathsf{v}H)$ and hence
        \begin{equation*}
            \diam(f(\del_\mathsf{v}f^{-1}(H)))\leq\diam(\del_\mathsf{v}H)+2S.
        \end{equation*}
        That $f$ is a coarse-\textit{equivalence} gives us a uniform bound of $\diam(\del_\mathsf{v}f^{-1}(H))$ in terms of $\diam(\del_\mathsf{v}H)$.
    \end{proof}

    In particular, if $\diam(\del_\mathsf{v}H)$ is itself also uniformly bounded, then so is $\diam(\del_\mathsf{v}f^{-1}(H))$.

    \begin{proposition}\label{prp:invariant_of_density_coarse_equivalence}
        The class of connected locally-finite graphs in which $\mc{H}_{\diam(\del)\leq R}$ is dense towards ends for some $R<\infty$ is invariant under coarse-equivalence.
    \end{proposition}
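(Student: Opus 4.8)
The plan is to unwind the definition of coarse-equivalence and transport the property "$\mc{H}_{\diam(\del)\leq R}$ is dense towards ends" across a coarse-equivalence $f:(X,G)\to(Y,T)$. Since coarse-equivalence is a symmetric relation, it suffices to show that if $\mc{H}_{\diam(\del)\leq R}(Y)$ is dense towards ends for some $R<\infty$, then $\mc{H}_{\diam(\del)\leq R'}(X)$ is dense towards ends for some $R'<\infty$. The key structural input is that $f$ induces a homeomorphism $\widehat{f}:\widehat{X}\to\widehat{Y}$ on end compactifications, hence a bijection $\epsilon(X)\to\epsilon(Y)$ on ends; this is because $f$ being a coarse-equivalence means the Boolean algebras $\mc{H}_{\del<\infty}(X)$ and $\mc{H}_{\del<\infty}(Y)$ are "coarsely isomorphic" via $H\mapsto f^{-1}(H)$ modulo finite sets. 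I would either cite this standard fact or spend a short paragraph establishing it: the preimage map $f^{-1}$ sends finite-boundary sets to finite-boundary sets by Lemma \ref{lem:coarse_equivalence_controls_boundary_diameter} (finite diameter $\Rightarrow$ finite, by local-finiteness), it respects complements and finite intersections up to finite symmetric difference, and a coarse inverse $g$ of $f$ induces an inverse map, so ultrafilters (ends) correspond.

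The main argument then runs as follows. Fix an end $p\in\epsilon(X)$ and a basic clopen neighborhood $p\in A$ with $A\in\mc{H}_{\del<\infty}(X)$; I must produce $H'\in\mc{H}_{\diam(\del)\leq R'}(X)$ with $p\in H'\subseteq A$. Pushing forward, $\widehat{f}(p)=q\in\epsilon(Y)$ is an end, and I claim there is a finite-boundary neighborhood $q\in B\in\mc{H}_{\del<\infty}(Y)$ with $f^{-1}(B)\subseteq A$ (up to shrinking $A$ first, or using that $g\circ f$ is close to the identity together with $q\in B\subseteq f(\Ball_{-S}(A))$-type reasoning — this is the fiddly coarse-geometry step). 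By density towards ends in $Y$, pick $H\in\mc{H}_{\diam(\del)\leq R}(Y)$ with $q\in H\subseteq B$. Then $f^{-1}(H)$ is a finite-boundary half-space of $X$ with $\diam(\del_\mathsf{v}f^{-1}(H))$ uniformly bounded — say by $R'_0$ — via Lemma \ref{lem:coarse_equivalence_controls_boundary_diameter}, and $p\in f^{-1}(H)\subseteq f^{-1}(B)\subseteq A$. This is almost what we want, except membership $p\in f^{-1}(H)$ must be checked: since $\widehat{f}(p)=q\in H$ and $\widehat{f}$ is induced by $f^{-1}$ on the level of Boolean algebras, $H\in q$ translates to $f^{-1}(H)\in p$, i.e. $p\in f^{-1}(H)$.

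There is a complication: $f^{-1}(H)$ need not be connected, whereas the statement only concerns $\mc{H}_{\diam(\del)\leq R}$ and does not demand $\mc{H}_\mathrm{conn}$, so this is actually fine — we only need the diameter bound on the boundary, which we have. (If one did want connected witnesses, one would invoke a Lemma \ref{lem:connected_witness_to_density}-style argument, but the proposition as stated does not require it.) Taking $R'\coloneqq R'_0$ works uniformly across all choices of $A$ and $p$, since the bound from Lemma \ref{lem:coarse_equivalence_controls_boundary_diameter} depends only on $f$ and on $R$, not on the particular $H$.

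\textbf{Main obstacle.} The delicate point is the coarse-geometry bookkeeping in the step "given $A\ni p$ in $X$, find $B\ni q$ in $Y$ with $f^{-1}(B)\subseteq A$." One cannot simply take $B=f(A)$ (which need not have finite boundary, nor satisfy $f^{-1}(f(A))\subseteq A$). The correct move is to shrink: using that $g\circ f$ is within bounded distance $C$ of $\id_X$, one checks that $f^{-1}\bigl(Y\comp\Ball_{S}(f(X\comp A))\bigr)\subseteq A$ once one accounts for the defect, and this set is a finite-boundary neighborhood of $q$ because $q\notin \widehat{f(X\comp A)}$ (as $p\notin\widehat{X\comp A}$). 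Getting the radii and the finite-boundary verifications exactly right — and making sure the resulting set is genuinely in $\mc{H}_{\del<\infty}(Y)$ so that density applies — is where the real work lies; everything else is formal transport along the end-compactification homeomorphism.
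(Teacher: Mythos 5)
Your proposal is correct and follows essentially the same strategy as the paper: reduce to producing $B\in\mc{H}_{\del<\infty}(Y)$ with $f(p)\in B$ and $f^{-1}(B)\subseteq A$, then apply density in $Y$ and pull $H\subseteq B$ back via $f^{-1}$, using Lemma~\ref{lem:coarse_equivalence_controls_boundary_diameter} to control the boundary diameter. You also correctly note that connectivity of the witness is not required by the statement.

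Two small points of comparison with the paper's proof, both concerning the ``fiddly'' step you flagged. First, the paper avoids invoking functoriality of the end compactification: it verifies $p\in f^{-1}(B)$ directly by showing $A\symdiff f^{-1}(B)$ is finite, which suffices since $p$ is a non-principal ultrafilter containing $A$ and every half-space in $\mc{H}_{\del<\infty}(X)$ containing a cofinite subset of a member of $p$ is again in $p$. This is more elementary than setting up the homeomorphism $\widehat X\to\widehat Y$, though your framing is clean conceptually. Second, and more substantively, the paper takes $B\coloneqq g^{-1}(A')$ with $A'\coloneqq\lnot\Ball_D(\lnot A)$, i.e.\ it shrinks $A$ \emph{inside $X$} and then applies the coarse inverse $g$; your proposal uses $B\coloneqq\lnot\Ball_S(f(\lnot A))$, i.e.\ it pushes $\lnot A$ forward via $f$ and shrinks \emph{inside $Y$}. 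These are coarsely the same set, but the finite-boundary verification — the ``real work'' you deferred — is cleaner for $g^{-1}(A')$: since $A'$ has finite vertex boundary in $X$ (being a finite perturbation of $A$) and coarse maps pull finite-boundary sets back to finite-boundary sets (adjacent fibers of $g$ map within bounded distance, and $g$ is proper), one gets $B\in\mc{H}_{\del<\infty}(Y)$ immediately. For your $\lnot\Ball_S(f(\lnot A))$, showing $\del_\mathsf{v}B$ is finite requires an extra argument (roughly: any boundary point is within bounded distance of $f(\del_\mathsf{v}(\lnot A))$, using coarse surjectivity of $f$, provided $S$ exceeds the coarse-onto constant). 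So your construction works, but you would need to carry out this verification, and choosing $B$ as a preimage under $g$ sidesteps it.
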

    \begin{proof}
        Let $(X,G)$, $(Y,T)$ be connected locally-finite graphs, $f:X\to Y$ be a coarse-equivalence with quasi-inverse $g:Y\to X$, and suppose $\mc{H}_{\diam(\del)\leq S}(Y)$ is dense towards ends for some $S<\infty$. By Lemma \ref{lem:coarse_equivalence_controls_boundary_diameter}, pick some $R<\infty$ so that for any $H\in\mc{H}_{\diam(\del)\leq S}(Y)$, we have $f^{-1}(H)\in\mc{H}_{\diam(\del)\leq R}(X)$.

        Fix an end $p\in\epsilon(X)$ and a neighborhood $p\in A\in\mc{H}_{\del<\infty}(X)$. We need to find some $B\in\mc{H}_{\del<\infty}(Y)$ such that $f(p)\in B$ and $f^{-1}(B)\subseteq A$, for then $f(p)\in H$ for some $B\supseteq H\in\mc{H}_{\diam(\del)\leq S}(Y)$, and hence we have
        \begin{equation*}
            p\in f^{-1}(H)\subseteq f^{-1}(B)\subseteq A
        \end{equation*}
        with $f^{-1}(H)\in\mc{H}_{\diam(\del)\leq R}(X)$. For convenience, let $D<\infty$ be the uniform distance $d(1_X,g\circ f)$.

        To this end, note that $f(p)\in B$ iff $p\in f^{-1}(B)$. Since $p\in A$, the latter can occur if $|A\symdiff f^{-1}(B)|<\infty$, and so we need to find such a $B\in\mc{H}_{\del<\infty}(Y)$ with the additional property that $f^{-1}(B)\subseteq A$.
        \begin{leftbar}
        \textit{Attempt 1.} Set $B\coloneqq g^{-1}(A)\in\mc{H}_{\del<\infty}(Y)$. Then $f^{-1}(B)\subseteq\Ball_D(A)$ since if $(g\circ f)(x)\in A$, then
                \begin{equation*}
                    d(x,A)\leq d(x,(g\circ f)(x))\leq d(1_X,g\circ f)=D.
                \end{equation*}
            By local-finiteness of $G$, we see that $A\symdiff f^{-1}(B)=A\comp f^{-1}(B)$ is finite, as desired.
        \end{leftbar}
        However, it is \textit{not} the case that $f^{-1}(B)\subseteq A$. To remedy this, we `shrink' $A$ by $D$ to $A'$ so that $\Ball_D(A')\subseteq A$, and take $B\coloneqq g^{-1}(A')$ instead. Indeed, $A'\coloneqq\lnot\Ball_D(\lnot A)\subseteq A$ works, since $f^{-1}(B)\subseteq\Ball_D(A')$ as before, so $A'\symdiff f^{-1}(B)=A'\comp f^{-1}(B)$ is finite. Also, $A\symdiff A'$ is finite since $x\in A\symdiff A'$ iff $x\in A$ and $d(x,\lnot A)\leq D$, so $A\symdiff f^{-1}(B)$ is finite too. It remains to show that $\Ball_D(A')\subseteq A$, for then $f^{-1}(B)\subseteq A$ as desired.

        Indeed, if $y\in\Ball_D(A')$, then by the (reverse) triangle-inequality we have $d(y,\lnot A)\geq d(x,\lnot A)-d(x,y)$ for all $x\in A'$. But $d(x,\lnot A)>D$, strictly, so $d(y,\lnot A)>D-D=0$, and hence $y\in A$.
    \end{proof}

    \begin{corollary}\label{cor:density_of_bounded_diameter_boundary_cuts_quasi_tree}
        If $(X,G)$ is a locally-finite quasi-tree, then the subpocset $\mc{H}_{\diam(\del)\leq R}(X)\cap\mc{H}_\mathrm{conn}(X)$ is dense towards ends for some $R<\infty$.
    \end{corollary}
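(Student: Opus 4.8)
The plan is to reduce, via Proposition \ref{prp:invariant_of_density_coarse_equivalence}, to the case of a simplicial tree, where bounded-boundary cuts are manifestly dense, and then to pass from arbitrary bounded-boundary half-spaces to \emph{connected} ones using Lemma \ref{lem:connected_witness_to_density}. In detail: since $(X,G)$ is a locally-finite quasi-tree, it is quasi-isometric — hence coarse-equivalent — to a locally-finite simplicial tree $T$; here I would invoke the standard fact that a locally-finite quasi-tree may be taken quasi-isometric to a \emph{locally-finite} tree, which is what makes Proposition \ref{prp:invariant_of_density_coarse_equivalence} (whose hypotheses demand local finiteness of both graphs) applicable. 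Granting that $\mc{H}_{\diam(\del)\leq 1}(T)$ is dense towards ends of $T$ (proved below), Proposition \ref{prp:invariant_of_density_coarse_equivalence} furnishes some $R<\infty$ with $\mc{H}_{\diam(\del)\leq R}(X)$ dense towards ends. Applying Lemma \ref{lem:connected_witness_to_density} to this pocset yields a pocset $\mc{H}'\subseteq\mc{H}_{\del<\infty}(X)\cap\mc{H}_\mathrm{conn}(X)$, still dense towards ends, each of whose members $H'$ has $\del_\mathsf{ie}H'\subseteq\del_\mathsf{ie}H$ for some $H\in\mc{H}_{\diam(\del)\leq R}(X)$. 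Since $\del_\mathsf{iv}H'$ and $\del_\mathsf{ov}H'$ are exactly the sets of heads and tails of pairs in $\del_\mathsf{ie}H'$, this gives $\del_\mathsf{v}H'\subseteq\del_\mathsf{v}H$, hence $\diam(\del_\mathsf{v}H')\leq R$; so $\mc{H}'\subseteq\mc{H}_{\diam(\del)\leq R}(X)\cap\mc{H}_\mathrm{conn}(X)$, which is therefore dense towards ends \emph{a fortiori}, as required.

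For the tree case, fix an end $p\in\epsilon(T)$, a geodesic ray $x_0,x_1,x_2,\dots$ representing it, and a clopen neighborhood $A\in\mc{H}_{\del<\infty}(T)$ of $p$. For each $m$ let $H_m\coloneqq\l\{z\in X\st x_{m+1}\in[x_m,z]\r\}$, the half-space separated from $x_0$ by deleting the edge between $x_m$ and $x_{m+1}$. Then $H_m$ is a connected, co-connected half-space with $\del_\mathsf{v}H_m=\l\{x_m,x_{m+1}\r\}$ (the only edge between $H_m$ and $\lnot H_m$ is that one), so $H_m\in\mc{H}_{\diam(\del)\leq 1}(T)\cap\mc{H}_\mathrm{conn}(T)$, and $p\in H_m$ since $H_m$ contains the tail $x_{m+1},x_{m+2},\dots$ of the ray. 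It remains to choose $m$ with $H_m\subseteq A$. The key point is that the $H_m$ are nested with $\bigcap_m H_m=\em$: a vertex $z$ lies in $H_m$ precisely when the geodesic $[x_0,z]$ uses the edge between $x_m$ and $x_{m+1}$, and this happens only for $m$ below the length of the longest common initial segment of the ray and $[x_0,z]$ — hence for only finitely many $m$. As $\del_\mathsf{v}A$ is finite, I can thus pick $m$ with $H_m\cap\del_\mathsf{v}A=\em$. Being connected and meeting neither $\del_\mathsf{iv}A$ nor $\del_\mathsf{ov}A$, $H_m$ contains no edge with exactly one endpoint in $A$, so $H_m\subseteq A$ or $H_m\subseteq\lnot A$; the latter is impossible since $p\in H_m$ and $p\in A$. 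Hence $p\in H_m\subseteq A$, completing the tree case.

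I expect the one genuinely non-routine ingredient to be precisely the reduction to a \emph{locally-finite} tree: one must know that a locally-finite quasi-tree is quasi-isometric not merely to some simplicial tree but to a locally-finite one, since Proposition \ref{prp:invariant_of_density_coarse_equivalence} is stated only for locally-finite graphs. I would dispatch this by citing the relevant result from the literature on quasi-trees (e.g. from \cite{CPTT23} or its references) rather than reproving it; everything else is the elementary tree combinatorics above together with bookkeeping for the boundary operators.
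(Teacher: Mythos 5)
Your proof follows the paper's own argument exactly: apply Proposition \ref{prp:invariant_of_density_coarse_equivalence} to transfer density from a tree, where bounded-diameter cuts along a geodesic ray are manifestly dense, and then pass to connected co-connected cuts via Lemma \ref{lem:connected_witness_to_density}, noting that $\del_\mathsf{ie}H'\subseteq\del_\mathsf{ie}H$ forces $\diam(\del_\mathsf{v}H')\leq\diam(\del_\mathsf{v}H)\leq R$. The one point you rightly flag---that one needs the target tree to be \emph{locally finite} for Proposition \ref{prp:invariant_of_density_coarse_equivalence} to apply---is silently assumed in the paper's one-line reduction, and is indeed a standard (but not automatic) fact about locally-finite quasi-trees that deserves the citation you propose.
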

    \begin{proof}
        Observe that $\mc{H}_{\diam(\del)\leq2}(T)$ is dense towards ends for any tree $T$, so Proposition \ref{prp:invariant_of_density_coarse_equivalence} proves the density of $\mc{H}_{\diam(\del)\leq R}(X)$ for some $R<\infty$. By Lemma \ref{lem:connected_witness_to_density}, there is a subpocset $\mc{H}'\subseteq\mc{H}_{\del<\infty}(X)\cap\mc{H}_\mathrm{conn}(X)$ dense towards ends such that for every $H'\in\mc{H}'$, we have $\del_\mathsf{ie}H'\subseteq\del_\mathsf{ie}H$ for some $H\in\mc{H}_{\diam(\del)\leq R}(X)$. Hence we have $H'\in\mc{H}_{\diam(\del)\leq R}(X)\cap\mc{H}_\mathrm{conn}(X)$, so the result follows.
    \end{proof}

    The cuts $\mc{H}\coloneqq\mc{H}_{\diam(\del)\leq R}(X)\cap\mc{H}_\mathrm{conn}(X)$ obtained here will be our starting point for Theorem \ref{thm:component-wise_construction}, so we need to show that it is finitely-separating. Indeed, by local-finiteness of $(X,G)$, the $R$-ball around any fixed $x\in X$ is finite, so since any cut $H\in\mc{H}$ with $x\in\del_\mathsf{v}H$ is contained in said $R$-ball, there are finitely-many such cuts. Thus, by Lemma \ref{lem:finitely_separating_iff_on_boundary_of_finite}, $\mc{H}$ is finitely-separating.

    Furthermore, we have by Lemma \ref{lem:finitely-separating_non-trivial_isolated} that $\mc{H}\subseteq2^X$ is closed and every non-trivial element is isolated, and those conditions allow for the construction in Theorem \ref{thm:component-wise_construction} to continue in Section \ref{sec:the_dual_median_graph_of_a_pocset}.

    Finally, we will need to represent `density towards ends' in a Borel manner, which ultimately is to ensure that the bounds $R<\infty$ can be obtained uniformly across all components; see Section \ref{sec:borel_treeings_of_graphings_with_dense_cuts} for details. For this, we will need the following

    \begin{definition}\label{def:H_blocks}
        Let $\mc{H}\subseteq2^X$ be a pocset. For $x,y\in X$, write $x\sim_\mc{H}y$ if $x$ and $y$ are contained in exactly the same half-spaces in $\mc{H}$, which induces an equivalence relation on $X$ whose classes $[x]_\mc{H}$ are called \textit{$\mc{H}$-blocks}.
    \end{definition}

    \begin{proposition}\label{prp:dense_cuts_iff_proper_walling}
        If $\mc{H}$ is a finitely-separating pocset of cuts on a connected locally-finite graph, then $\mc{H}$ is dense towards ends iff (i) each $\mc{H}$-block is finite, and (ii) each $H\in\mc{H}^\ast$ has only finitely-many successors.
    \end{proposition}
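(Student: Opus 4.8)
The plan is to prove both implications, working inside the end compactification $\widehat{X}$, in which every $H\in\mc{H}$ (indeed every $H\in\mc{H}_{\del<\infty}(X)$) is clopen and every vertex is an isolated point.

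\textbf{($\Rightarrow$).} Assume $\mc{H}$ is dense towards ends. For (i): were some block $[x]_\mc{H}$ infinite it would have a limit point $p$ in the compact space $\widehat{X}$, necessarily an end; since each $H\in\mc{H}$ is clopen with all of $[x]_\mc{H}$ on the $x$-side of $H$, so is $p$, whence $p\in H\iff x\in H$ for every $H\in\mc{H}$. But then $X\comp\{x\}$ is a neighbourhood of $p$ admitting no $H\in\mc{H}$ with $p\in H\subseteq X\comp\{x\}$ (such an $H$ would have $x\notin H\ni p$), contradicting density. For (ii): suppose $H\in\mc{H}^\ast$ has infinitely many successors, equivalently $\lnot H$ has infinitely many predecessors $Y_i\subsetneq\lnot H$. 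By Lemma~\ref{lem:connected_cuts_non_nested_finitely_others} the non-nestedness graph on $\{Y_i\}$ has finite degrees, hence an infinite independent set; and distinct \textit{nested} half-spaces lying strictly below, and covered by, a common half-space are necessarily disjoint (each other nesting alternative contradicts covering, or $Y_i\subseteq\lnot H\subsetneq X$), so after passing to a subfamily the $Y_i$ are pairwise-disjoint nonempty connected subsets of $\lnot H$. Choosing $v_i\in Y_i$ and a convergent subsequence $v_{i_k}\to p\in\epsilon(X)$, disjointness gives $v_{i_k}\in\lnot Y_{i_j}$ for $k\neq j$, so $p\in\lnot Y_{i_j}$ for all $j$, while $v_{i_k}\in\lnot H$ gives $p\notin H$. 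Finally one checks $p$ cannot be approximated from inside the neighbourhood $\lnot(H\cup Y_{i_1})$: an $L\in\mc{H}$ with $p\in L\subseteq\lnot(H\cup Y_{i_1})$ is nested with all but finitely many $Y_{i_j}$, and since $p\notin Y_{i_j}$, $L\cap H=\em$, and $\lnot H$ covers each $Y_{i_j}$, the nesting alternatives force $Y_{i_j}\subseteq\lnot L$ for cofinitely many $j$; then $\lnot L$ is infinite and tracing the covering relation shows $L$ is too large to fit, contradicting density.

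\textbf{($\Leftarrow$).} Assume (i) and (ii); fix an end $p$ and a clopen neighbourhood $A\in\mc{H}_{\del<\infty}(X)$ with $p\in A$. It suffices to produce $H\in\mc{H}$ with $p\in H$ and $H\cap\del_\mathsf{v}A=\em$, for such an $H$ is connected and disjoint from $\del_\mathsf{v}A$, hence contained in $A$ or in $\lnot A$, and $p\in H$ forces $H\subseteq A$. The engine is a \textit{descent lemma}, which needs only finite-separation: if $H\in\mc{H}^\ast$ with $p\in H$ is not minimal among $\{K\in\mc{H}:p\in K\}$, it has a predecessor in that set; indeed, given a witness $K\subsetneq H$ with $p\in K$ and points $f\in K$, $g\in\lnot H$, every $L$ with $K\subseteq L\subsetneq H$ separates $f$ from $g$, so by finite-separation there are only finitely many such $L$ and a maximal one is the desired predecessor. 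Starting from some $H_0\in\mc{H}^\ast$ with $p\in H_0$ — one exists since $\mc{H}^\ast\neq\em$ by (i) — and iterating, we get $H_0\supsetneq P_0\supsetneq P_1\supsetneq\cdots$ with each $P_{n+1}$ a predecessor of $P_n$ containing $p$. If this descent is infinite then $\bigcap_nP_n=\em$ as a vertex set (again by finite-separation, opposing a putative common vertex to a point of $P_0\comp P_1$), so some $P_n$ avoids the finite set $\del_\mathsf{v}A$ and we are done. If the descent instead terminates at a $P_\ast$ minimal in $\{K\in\mc{H}:p\in K\}$, we must exclude $P_\ast\cap\del_\mathsf{v}A\neq\em$: for $f\in P_\ast\cap\del_\mathsf{v}A$, finiteness of $[f]_\mc{H}$ and the fact that $p$ is an end give that $p$ is not $\mc{H}$-equivalent to $f$, and one promotes a separator — using that, by Lemma~\ref{lem:connected_cuts_non_nested_finitely_others}, it may be taken nested with $P_\ast$ — to a half-space of $\mc{H}$ strictly inside $P_\ast$ and missing $f$, contradicting minimality. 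This terminal step is where (ii) is essential, controlling the finitely many predecessors available there.

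The main obstacle is precisely this terminal/minimal case in $(\Leftarrow)$, together with the parallel non-approximability check closing $(\Rightarrow)$(ii): both require converting a fact about where the end $p$ sits — that it is not $\mc{H}$-equivalent to a vertex, or that a candidate half-space is ``too large'' — into a statement about a \textit{single} half-space of $\mc{H}$, which is delicate because $\mc{H}$ is merely a pocset, not closed under intersections. The available levers are the finiteness of non-nestedness (Lemma~\ref{lem:connected_cuts_non_nested_finitely_others}) and the descent lemma; marshalling them, with (i) keeping $\mc{H}$-blocks finite and (ii) keeping the local branching finite, is the crux of the proof.
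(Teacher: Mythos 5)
Your forward direction is correct but takes a different, longer route for (ii): the paper covers $\epsilon(\lnot H)$ directly by $\lnot K_\alpha$ for successors $K_\alpha$, extracts a finite subcover, and invokes Lemma~\ref{lem:connected_cuts_non_nested_finitely_others} once; you instead build an infinite pairwise-disjoint family of complements of successors, extract a limit end, and check non-approximability — this works, but the final ``$L$ is too large to fit'' step is really just the observation that a clopen $L\ni p$ must contain $v_{i_k}\in Y_{i_k}$ for cofinitely many $k$ while being disjoint from cofinitely many $Y_{i_j}$; worth spelling out.

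The backward direction is where the real gaps are. Your descent lemma is fine (and genuinely different from the paper's reduction to $\mc{H}_A$ and a minimal $H_0\in\mc{H}_A^\ast$), and the infinite-descent case is handled correctly. But the terminal case $P_\ast\cap\del_\mathsf{v}A\neq\em$ — which you rightly flag as the crux — is hand-waved in a way that hides two genuine issues. First, the assertion that ``finiteness of $[f]_\mc{H}$ and the fact that $p$ is an end give that $p$ is not $\mc{H}$-equivalent to $f$'' is not a consequence of (i) alone: a clopen principal orientation $\widehat{f}$ can a priori have infinitely many minimal elements, in which case $\bigcap_{H\in\widehat{f}}H$ need not be the finite set $[f]_\mc{H}$ and $p$ is not obviously excluded; ruling this out requires exactly the kind of ``finite intersection of a minimal element with the successors of its complement'' argument the paper uses, which is where (ii) enters. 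Second, even granting a separator $L$ nested with $P_\ast$ with $p\in L\not\ni f$, the nested alternative $L\cup P_\ast=X$ is not excluded by your reasoning: in that case $\lnot L\subsetneq P_\ast$ but $\lnot L$ contains $f$ and \emph{not} $p$, so it does not contradict minimality of $P_\ast$ in $\{K\in\mc{H}:p\in K\}$. Closing this case requires observing that $\lnot P_\ast\subsetneq L$, passing to a successor $K$ of $\lnot P_\ast$ inside $L$, and using that the finite intersection $P_\ast\cap\bigcap K$ lies in $L$ — again the paper's move. So the plan is salvageable, but as written the terminal step is not a proof; it compresses precisely the part that needs (ii), and mislocates (i) as doing work it cannot do alone.
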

    \begin{proof}
        If $\mc{H}$ is dense towards ends, we will cover (certain closed sets in) $\epsilon(X)$ by cuts in $\mc{H}$, which contains a finite subcover. We will show that a certain Boolean combination of this subcover, which has finite-boundary, is finite using Lemma \ref{lem:infinite_iff_contains_end}, giving us the desired finiteness claims (i) and (ii). Below are the details.

        For (i), fix an $\mc{H}$-block $[x]_\mc{H}$ and let $A\coloneqq\lnot\l\{x\r\}$. By density, we can cover each end $p\in\epsilon(X)$ with some $H_p\subseteq A$, which gives us a finite subcover $\l\{H_i\r\}_{i<n}$ of $\epsilon(X)$. Note that $\bigcap_{i<n}\lnot H_i\in\mc{H}_{\del<\infty}(X)$ contains $[x]_\mc{H}$, and is finite since it contains no ends in $\epsilon(X)$.

        For (ii), fix $H\in\mc{H}^\ast$ and let $\mc{K}\subseteq\mc{H}^\ast$ be the collection of all successors of $H$. By density, we can cover each end $p\in\epsilon(\lnot H)$ by some $H_p\subseteq\lnot H$ in $\mathcal{H}$, which in turn is contained in $\lnot K_\alpha$ for some $K_\alpha\in\mc{K}$; this gives us a finite subcover $\l\{\lnot K_i\r\}_{i<n}$ of $\epsilon(\lnot H)$. Every successor $K$ of $H$ not in $\l\{K_i\r\}_{i<n}$ is non-nested with at least one $K_i$, and by Lemma \ref{lem:connected_cuts_non_nested_finitely_others}, each $K_i$ is non-nested with finitely-many other half-spaces. Thus $\mc{K}$ is finite.\\[-0.1in]\ 

        Conversely, fix an end $p\in\epsilon(X)$ and a neighborhood $p\in A\in\mc{H}_{\del<\infty}(X)$. Since $\mc{H}$ consists of connected sets, it suffices to find some $H\in\mc{H}$ containing $\del_\mathsf{v}A$ but not $p$, for then $p\in\lnot H\subseteq A$ as desired.
        \begin{center}
            \begin{minipage}{0.95\textwidth}
                \begin{observation*}
                    Finitely-many $H,\lnot H\in\mc{H}$ may be removed from $\mc{H}$ and it will still satisfy the conditions (i) and (ii) above. Indeed, that (ii) still holds is obvious. For (i), we may remove a single pair, since with $\mc{H}'\coloneqq\mc{H}\comp\l\{H,\lnot H\r\}$, the map $X/\mc{H}\to X/\mc{H}'$ sending $[x]_\mc{H}\to[x]_{\mc{H}'}$ is surjective and at-most 2-to-1.
                \end{observation*}
            \end{minipage}
        \end{center}
        Thus, for each of the finitely-many $x,y\in\del_\mathsf{v}A$, we may remove the finitely-many half-spaces separating them, so we may assume that $\mc{H}=\mc{H}_A\sqcup\lnot\mc{H}_A$ is partitioned into the half-spaces in $\mc{H}_A$ containing $\del_\mathsf{v}A$, and its complements which are disjoint from $\del_\mathsf{v}A$. Towards a contradiction, assume that each $H\in\mc{H}_A$ contains $p$.

        Since $\mc{H}$-blocks are finite, there exists some $H_0\in\mc{H}^\ast_A$, which we may take to be minimal by Lemma \ref{lem:finitely-separating_non-trivial_isolated}. There are finitely-many half-spaces non-nested with $H_0$ by Lemma \ref{lem:connected_cuts_non_nested_finitely_others}, so by the above observation, we may assume without loss of generality that there are none. By minimality of $H_0$, all successors $K\supset\lnot H_0$ lie in $\mc{H}^\ast_A$, so the finite intersection $B\coloneqq H_0\cap\bigcap_{K\supset\lnot H_0}K$ contains $p$ and is hence infinite. Since $\mc{H}$-blocks are finite and $B$ is a union of $\mc{H}$-blocks, it contains infinitely-many $\mc{H}$-blocks; any two such $\mc{H}$-blocks are separated by some $H\in\mc{H}_A^\ast$ nested with $H_0$, which forces $H\subset H_0$, and contradicts the minimality of $H_0$.
    \end{proof}

    \section{The Dual Median Graph of a Profinite Pocset and its Spanning Trees}

    \subsection{Construction of the dual median graph}\label{sec:the_dual_median_graph_of_a_pocset}

    We present a classical construction in geometric group theory of a median graph $\mc{M}(\mc{H})$ associated to a profinite pocset $\mc{H}$ with every non-trivial element isolated; see \cite{Dun79}, \cite{Rol98}, \cite{Sag95}, and \cite{NR03} for various other applications of this construction.

    In the context of Theorem \ref{thm:treeing_quasi-trees}, this will be applied to the pocset $\mc{H}\coloneqq\mc{H}_{\diam(\del)\leq R}(X)\cap\mc{H}_\mathrm{conn}(X)$ of cuts in a locally-finite graph $(X,G)$, and will also be the first step in the construction in Theorem \ref{thm:component-wise_construction}.
    
    \begin{definition}\label{def:orientation}
        An \textit{orientation} on $\mc{H}$ is an upward-closed subset $U\subseteq\mc{H}$ containing exactly one of $H,\lnot H$ for each $H\in\mc{H}$. We let $\mc{U}(\mc{H})$ denote the set of all orientations on $\mc{H}$ and let $\mc{U}^\circ(\mc{H})$ denote the clopen ones.

        Intuitively, an orientation is a `maximally consistent' choice of half-spaces\footnote{This can be formalized by letting $\sim$ be the equivalence relation on $\mc{H}$ given by $H\sim\lnot H$. Letting $\del:\mc{H}\to\mc{H}/\!\!\sim$ denote the quotient map, orientations $U\subseteq\mc{H}$ then correspond precisely to sections $\phi:\mc{H}/\!\!\sim\,\to\mc{H}$ of $\del$ such that $\phi(\del H)\not\subseteq\lnot\phi(\del K)$ for every $H,K\in\mc{H}$; the latter condition rules out `orientations' of the form $\leftarrow\hspace{-4.15pt}|\,\,|\hspace{-4.15pt}\rightarrow$.} in $\mc{H}$.
    \end{definition}

    \begin{example}
        Each $x\in X$ induces its \textit{principal orientation} $\widehat{x}\coloneqq\l\{H\in\mc{H}\st x\in H\r\}$, which is clopen in $\mc{H}$, and gives us a \textit{principal orientations} map $X\to\mc{U}^\circ(\mc{H})$. However, this map is \textit{not necessarily} injective, and its fibers $[x]_\mc{H}\coloneqq\l\{y\in X\st\widehat{x}=\widehat{y}\r\}$ are precisely the $\mc{H}$-blocks as in Definition \ref{def:H_blocks}.
    \end{example}

    \begin{proposition}\label{prp:construction_of_dual_median_graph}
        Let $\mc{H}$ be a profinite pocset with every non-trivial element isolated. Then the graph $\mc{M}(\mc{H})$, whose vertices are clopen orientations $\mc{U}^\circ(\mc{H})$ and whose edges are pairs $\l\{U,V\r\}$ with $V=U\symdiff\l\{H,\lnot H\r\}$ for some minimal $H\in U\comp\l\{\lnot0\r\}$, is a median graph with path metric $d(U,V)=|U\symdiff V|/2$ and medians
        \begin{equation*}
            \begin{aligned}
                \l\langle U,V,W\r\rangle&\coloneqq\l\{H\in\mc{H}\st H\textrm{ belongs to at least two of }U,V,W\r\} \\
                                        &=(U\cap V)\cup(V\cap W)\cup(U\cap W).
            \end{aligned}
        \end{equation*}
    \end{proposition}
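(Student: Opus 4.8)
The plan is to construct $\mc{M}(\mc{H})$ explicitly and verify in turn that it is connected, that the path metric is $|U\symdiff V|/2$, and that it satisfies the median axiom, along the way identifying the median as the claimed Boolean combination.

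First I would check that the edge relation is symmetric and well-defined: if $V=U\symdiff\{H,\lnot H\}$ for $H\in U$ minimal above $\lnot0$, then $\lnot H\in V$ is minimal in $V$ above $\lnot0$ (here is where \emph{total order-disconnectedness} and the isolation of non-trivial elements matter — minimal elements exist and the operation of ``flipping'' a minimal half-space preserves upward-closure and hence lands back in $\mc{U}^\circ(\mc{H})$; one must check $V$ is again clopen, which uses that $H$ is isolated). Next I would show that for distinct $U,V\in\mc{U}^\circ(\mc{H})$ the symmetric difference $U\symdiff V$ is finite: this is the crucial finiteness input. The idea is that $U\comp V$ is downward-closed in $U$ relative to the half-spaces on which $U,V$ disagree — more precisely, if $H\in U\comp V$ then every $K\leq H$ with $K\in U$ also lies in $U\comp V$ — and by compactness plus total order-disconnectedness, the disagreement set $\{H : H\in U \iff \lnot H \in V\}$ is clopen, hence finite since each of its $\sim$-classes contributes and the ``downward-closed in a clopen set with isolated points'' structure forces finiteness. (Equivalently, one shows directly that $U$ and $V$ differ on only finitely many half-spaces by a König-type / compactness argument on the profinite pocset.) Granting $|U\symdiff V|<\infty$, I would then induct on $|U\symdiff V|/2$ to build an edge-path from $U$ to $V$: pick $H\in U\comp V$ minimal in $\mc{H}$ among elements of $U\symdiff V$ (such $H$ is then minimal in $U$ above $\lnot0$ among the ``flippable'' ones, because anything strictly below it and in $U$ would also be in $V$ by minimality, hence in $U\cap V$, so flipping $H$ keeps things upward-closed), flip it to get $U'$ adjacent to $U$ with $|U'\symdiff V| = |U\symdiff V|-2$. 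This proves connectedness and the upper bound $d(U,V)\leq |U\symdiff V|/2$; the lower bound is immediate since each edge changes $|\,\cdot\,\symdiff V|$ by exactly $2$. So $d(U,V)=|U\symdiff V|/2$.

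With the metric in hand, intervals become transparent: $W\in[U,V]$ iff $|U\symdiff W| + |W\symdiff V| = |U\symdiff V|$, which (since $U\symdiff V = (U\symdiff W)\symdiff(W\symdiff V)$ and these symmetric differences are finite) happens iff $U\symdiff W$ and $W\symdiff V$ are disjoint, i.e. iff $W$ agrees with at least one of $U,V$ on every half-space; concretely, $H\in W \iff H$ belongs to at least one of $U,V$ whenever $H\in U\cap V$... more precisely $[U,V] = \{W : U\cap V \subseteq W \subseteq U\cup V\}$. Then for three orientations $U,V,W$, a vertex $Z$ lies in $[U,V]\cap[V,W]\cap[U,W]$ iff on each half-space $Z$ agrees with the majority of $U,V,W$ — and since $U,V,W$ are orientations, each $H$ has a well-defined majority value among $\{H \text{ in}, \lnot H \text{ in}\}$, so $Z$ is forced to equal $M:=\{H : H \text{ belongs to at least two of } U,V,W\}$. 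It remains to check that $M$ is itself a clopen orientation: it is upward-closed (a majority-membership is preserved upward since each of $U,V,W$ is), contains exactly one of $H,\lnot H$ (majority is well-defined and the ``no $\leftarrow\!|\ |\!\rightarrow$'' condition for orientations is inherited), and is clopen because $M\symdiff U$ is contained in $(U\symdiff V)\cup(U\symdiff W)$, which is finite, so $M$ is a finite modification of the clopen set $U$. Finally $M\in[U,V]$ etc. follows from the majority description, so $[U,V]\cap[V,W]\cap[U,W] = \{M\}$ is a singleton. The identity $M = (U\cap V)\cup(V\cap W)\cup(U\cap W)$ is just the set-theoretic reformulation of ``at least two of three''.

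The main obstacle I anticipate is the finiteness of $U\symdiff V$ for clopen orientations — everything downstream (the metric formula, connectedness, clopenness of the median) rests on it, and it is the one place where the hypotheses ``profinite'' and ``every non-trivial element isolated'' must be used essentially rather than formally. The clean way is: the set $D = \{H\in\mc{H} : H\in U \text{ but } \lnot H\in V\}$ is open (as $U$ is clopen and $V$ is clopen, and membership conditions cut out clopen sets), its $\sim$-saturation $D\cup\lnot D = U\symdiff V$ is clopen hence compact, and it consists entirely of non-trivial (hence isolated) points — so being compact and discrete, it is finite. One must double-check that $U\symdiff V$ cannot contain $0$ or $\lnot0$ (it cannot, since both orientations contain $\lnot0$ and exclude $0$ by upward-closure and the orientation condition), which is why the isolation hypothesis, applying only to \emph{non-trivial} elements, suffices. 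After that, the rest is the routine ``majority/Hamming cube'' computation sketched above.
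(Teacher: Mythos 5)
Your proposal is correct and follows essentially the same route as the paper: finiteness of $U\symdiff V$ via compactness of a clopen subset of the isolated non-trivial points, connectedness by inductively flipping minimal half-spaces of $U\setminus V$, and the median via the interval characterization $[U,V]=\{W:U\cap V\subseteq W\subseteq U\cup V\}$. Your additional checks that the proposed median is upward-closed, chooses exactly one of each complementary pair, and is clopen (being a finite modification of $U$) are implicit in the paper's ``which is clopen if $U,V,W$ are'' but are worth spelling out as you do.
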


    \begin{proof}
        First, $V\coloneqq U\symdiff\l\{H,\lnot H\r\}$ as above is clopen since $H,\lnot H\in\mc{H}^\ast$ are isolated (whence $\l\{H\r\},\l\{\lnot H\r\}$ are clopen), and it is an orientation by minimality of $H$. That $\mc{M}(\mc{H})$ is connected follows from the following claim by noting that $U\symdiff V\not\ni0,\lnot0$ is clopen, so it is a compact set of isolated points, whence finite.
        \begin{center}
            \begin{minipage}{0.95\textwidth}
                \begin{claim*}[\cite{Sag95}*{Theorem 3.3}]
                    There is a path between $U,V$ iff $U\symdiff V$ is finite, in which case
                    \begin{equation*}
                        d(U,V)=|U\symdiff V|/2=|U\comp V|=|V\comp U|.
                    \end{equation*}
                \end{claim*}
                \begin{proof}
                    If $(U_i)_{i<n}$ is a path from $U\eqqcolon U_0$ to $V\eqqcolon U_{n-1}$, then, letting $\l\{H_i,\lnot H_i\r\}\coloneqq U_i\symdiff U_{i-1}$ for all $1\leq i<n$ gives us a sequence $(H_i)_{i<n}$ inducing\footnote{In the sense that $U_i=U_{i-1}\symdiff\l\{H_i,\lnot H_i\r\}$ and $H_i\in U_i$ for each $1\leq i<n$; see \cite{Tse20}*{Definition 2.20}.} this path, whence $U\symdiff V$ consists of $\l\{H_i\r\}_{i<n}$ and their complements. Thus $U\symdiff V=2n=2d(U,V)$, as desired.

                    \hspace{0.2in}Conversely, if $U\symdiff V=\l\{H_1,\dots,H_n\r\}\sqcup\l\{K_1,\dots,K_m\r\}$ with $U\comp V=\l\{H_i\r\}$ and $V\comp U=\l\{K_j\r\}$, then $\lnot H_i\in V\comp U$ and $\lnot K_j\in U\comp V$ for all $i<n$ and $j<m$, so $n=m$ and $V=U\cup\l\{\lnot H_i\r\}\comp\l\{H_i\r\}$. We claim that there is a permutation $\sigma\in S_n$ such that $(H_{\sigma(i)})$ induces a path from $U\eqqcolon U_0$ to $V\eqqcolon U_{n-1}$. Choose a minimal $H\in\l\{H_i\r\}_{i\leq n}$, which is also minimal in $U$: if $K\subseteq H$ for some $K\in U$, then $\lnot H\subseteq\lnot K$, and hence $\lnot K\in V$, so $K=H_i\subseteq H$ for some $i$, forcing $K=H$. Set $U_1\coloneqq U\symdiff\l\{H,\lnot H\r\}$, which is a clopen orientation, and continuing inductively gives us the desired path with $d(U,V)=n$.\phantom\qedhere\hfill$\square$
                \end{proof}
            \end{minipage}
        \end{center}
        Finally, we show that $\mc{M}(\mc{H})$ is a median graph. Fix $U,V,W\in\mc{U}^\circ(\mc{H})$, and note that for any $M\in\mc{U}^\circ(\mc{H})$, we have by the triangle inequality that $M\in[U,V]$ iff $(U\comp M)\cup(M\comp V)\subseteq U\comp V$, which clearly occurs iff $U\cap V\subseteq M\subseteq U\cup V$. Thus, a vertex $M$ lies in the triple intersection $[U,V]\cap[V,W]\cap[U,W]$ iff
        \begin{equation*}
            (U\cap V)\cup(V\cap W)\cup(U\cap W)\subseteq M\subseteq(U\cup V)\cap(V\cup W)\cap(U\cup W).
        \end{equation*}
        Note that the two sides coincide, so $M=\l\langle U,V,W\r\rangle$ $-$ which is clopen if $U,V,W$ are $-$ is as claimed.
    \end{proof}

    Given such a pocset $\mc{H}$, the graph $\mc{M}(\mc{H})$ constructed above is called the \textit{dual}\footnote{The name comes from a Stone-type duality between the categories \{median graphs, median homomorphisms\} and \{profinite pocsets with non-trivial points isolated, continuous maps\}, where from a median graph $X$ one can construct a canonical pocset $\mc{H}_\mathrm{cvx}(X)$ of convex half-spaces (see \cite{CPTT23}*{Section 2.D} for details).} median graph of $\mc{H}$. An important special case of this construction is when $\mc{H}$ is \textit{nested}, in which case $\mc{M}(\mc{H})$ is a tree.

    \begin{corollary}\label{cor:nested_implies_tree}
        Let $\mc{H}$ be a profinite pocset with non-trivial points isolated. If $\mc{H}$ is nested, then the median graph $\mc{M}(\mc{H})$ is acyclic, and hence $\mc{M}(\mc{H})$ is a tree.
    \end{corollary}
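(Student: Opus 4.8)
The plan is to show that the connected median graph $\mc{M}(\mc{H})$ from Proposition \ref{prp:construction_of_dual_median_graph} contains no cycles; since a connected cycle-free graph is a tree, this suffices. In view of the description of hyperplanes in Proposition \ref{prp:half-spaces_are_cones}, the natural route is to first rule out $4$-cycles (squares) and then bootstrap to full acyclicity.

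\textit{Step 1: $\mc{M}(\mc{H})$ has no square.} Consider a $4$-cycle $U_0, U_1, U_2, U_3, U_0$. By definition of the edge relation, passing from each vertex to the next flips a minimal complementary pair; since the symmetric difference of all pairs flipped around the cycle must vanish, and consecutive flips are distinct (otherwise two of the $U_i$ coincide), a short symmetric-difference computation shows that exactly two distinct pairs $\{H_1,\lnot H_1\}$ and $\{H_2,\lnot H_2\}$ occur, each twice and in alternating order. Hence $U_0,U_1,U_2,U_3$ agree on every half-space outside $\{H_1,\lnot H_1,H_2,\lnot H_2\}$ and realize all four sign patterns on this set; in particular, for each $i,j\in\{0,1\}$ some orientation contains both $\lnot^iH_1$ and $\lnot^jH_2$. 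But an orientation is upward-closed and picks exactly one of each complementary pair, so this is impossible when $\lnot^iH_1\cap\lnot^jH_2=\em$ (then $\lnot^iH_1\subseteq\lnot^{1-j}H_2$, forcing $\lnot^{1-j}H_2$ into the orientation as well). Therefore $H_1$ and $H_2$ are non-nested, contradicting the nestedness of $\mc{H}$. (Alternatively: the two hyperplanes of the square cut convex co-convex half-spaces of $\mc{M}(\mc{H})$ which, once identified with sets of the form $\{U:H\in U\}$, are non-nested by Corollary \ref{cor:non-nested_iff_embedding_of_hamming}, and this pulls back to non-nestedness in $\mc{H}$ via upward-closure exactly as above.)

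\textit{Step 2: no square implies acyclic.} By the final assertion of Proposition \ref{prp:half-spaces_are_cones}, the relation ``lie on a common hyperplane'' on the edges of $\mc{M}(\mc{H})$ is generated by parallel sides of squares; with no squares present it is therefore trivial, so every edge $(U,V)$ is the only edge on its hyperplane $\del_\mathsf{ie}\cone_U(V)$. By Lemma \ref{lem:cones_are_convex}, $\cone_U(V)\sqcup\cone_V(U)=\mc{M}(\mc{H})$ is a partition into convex — hence connected — sets across which the sole edge is $(U,V)$; thus every edge of $\mc{M}(\mc{H})$ is a bridge, so $\mc{M}(\mc{H})$ is a forest, and being connected, a tree. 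The only real work is the bookkeeping in Step 1 — in particular, correctly using upward-closure to pass from ``some orientation contains $\lnot^iH_1$ and $\lnot^jH_2$'' to ``$\lnot^iH_1\cap\lnot^jH_2\neq\em$''; Step 2 is immediate from the cited results.
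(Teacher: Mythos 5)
Your proof is correct, but it follows a genuinely different route from the paper's. The paper argues directly on an arbitrary cycle $(U_i)$ in $\mc{M}(\mc{H})$: letting $(H_i)$ be the sequence of half-spaces flipped along the edges, it uses nestedness and upward-closure to rule out three of the four possible nesting relations between consecutive $H_{i-1}, H_i$, concluding that $(H_i)$ is \emph{strictly increasing} and hence cannot close up into a cycle. Your argument instead factors through the square structure: Step 1 is a pigeonhole/sign-pattern computation showing a $4$-cycle would force two half-spaces $H_1,H_2\in\mc{H}$ to be non-nested (via the upward-closure observation that $\lnot^iH_1\cap\lnot^jH_2=\em$ forbids an orientation containing both $\lnot^iH_1$ and $\lnot^jH_2$), contradicting nestedness; Step 2 then invokes the ``hyperplanes are generated by parallel sides of squares'' clause of Proposition~\ref{prp:half-spaces_are_cones} to conclude that, absent squares, each hyperplane consists of a single edge, so by Lemma~\ref{lem:cones_are_convex} every edge is a bridge and $\mc{M}(\mc{H})$ is a tree. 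The paper's proof is more self-contained, using only the combinatorics of the edge relation in $\mc{M}(\mc{H})$; yours leans on the structure theory of median graphs developed in Section~\ref{sec:convex_half-spaces_of_median_graphs} but makes the geometric picture (square-free $\Rightarrow$ every edge a bridge $\Rightarrow$ tree) more explicit, and cleanly separates the purely pocset-theoretic content (nestedness kills squares) from the median-graph machinery. Both are valid; the trade-off is directness versus conceptual transparency.
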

    \begin{proof}
        Let $(U_i)_{i<n}$ be a cycle in $\mc{M}(\mc{H})$, say induced by some sequence $(H_i)_{i<n}\subseteq\mc{H}$ of half-spaces. We show that $(H_i)$ is \textit{strictly} increasing, so that $H_0\subset\cdots\subset H_n\subset H_0$, which is absurd.

        We have $U_{i+1}=U_{i-1}\cup\{\lnot H_i,\lnot H_{i-1}\}\comp\l\{H_i,H_{i-1}\r\}$, so $H_i\neq\lnot H_{i-1}$ (for otherwise $U_{i+1}=U_{i-1}$). Since $H_i\in U_i=U_{i-1}\cup\{\lnot H_{i-1}\}\comp\l\{H_{i-1}\r\}$, we see that $H_i\in U_{i-1}$, and since $H_i\neq H_{i-1}$, it suffices by nestedness of $\mc{H}$ to remove the three cases when $\lnot H_i\subseteq H_{i-1}$, $H_{i-1}\subseteq\lnot H_i$, and $H_i\subseteq H_{i-1}$.

        Indeed, if $\lnot H_i\subseteq H_{i-1}$, then $H_{i-1}\in U_{i+1}$ by upward-closure of $U_{i+1}\ni\lnot H_i$. But since $H_{i-1}\neq\lnot H_i$, we have by definition of $U_{i+1}$ that $H_{i-1}\in U_i$, a contradiction. The other cases are similar.
    \end{proof}

    Nonetheless, in the general non-nested case, $\mc{M}(\mc{H})$ still admits a \textit{canonical} spanning tree if we fix a proper colouring of $\mc{H}_\mathrm{cvx}^\ast(\mc{M}(\mc{H}))$ into its nested sub-pocsets (Proposition \ref{prp:canonical_spanning_trees}). For this, we will need the following

    \begin{proposition}\label{prp:dual_median_graph_of_cuts_has_finite_hyperplanes}
        The dual median graph $\mc{M}(\mc{H})$ of a pocset of finitely-separating cuts has finite hyperplanes.
    \end{proposition}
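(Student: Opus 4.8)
The plan is to recognize each hyperplane of $\mc{M}(\mc{H})$ as (essentially) a set of clopen orientations in which a single half-space is minimal, and then to bound that set using the finiteness of non-nestedness from Lemma \ref{lem:connected_cuts_non_nested_finitely_others}. (Recall $\mc{M}(\mc{H})$ is defined: by Lemma \ref{lem:finitely-separating_non-trivial_isolated}, $\mc{H}$ is a closed subpocset of $2^X$ with non-trivial elements isolated, hence a profinite pocset to which Proposition \ref{prp:construction_of_dual_median_graph} applies.) Observe that each edge $\l\{U,V\r\}$ of $\mc{M}(\mc{H})$ carries a \emph{label}, the pair $\l\{H,\lnot H\r\}\coloneqq U\symdiff V$, and that at the endpoint $U$ containing $H$, the half-space $H$ is minimal in $U\comp\l\{\lnot0\r\}$ — equivalently, since $\lnot0$ is the top of $\mc{H}$ and $H$ is non-trivial, minimal in $U$.

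I would first show that all edges of one hyperplane carry the same label. By the last sentence of Proposition \ref{prp:half-spaces_are_cones}, the hyperplane equivalence relation on edges is generated by pairs of parallel sides of squares, so it is enough to check this for a $4$-cycle $U_0U_1U_2U_3$. If its consecutive edges have labels $L_0,L_1,L_2,L_3$, then $L_0\symdiff L_1\symdiff L_2\symdiff L_3=\em$, and $L_0\neq L_1$ (as $d(U_0,U_2)=2$) makes $L_0\symdiff L_1$ a four-element set, so $\l\{L_0,L_1\r\}=\l\{L_2,L_3\r\}$; the ``crossed'' assignment $L_2=L_1$, $L_3=L_0$ is impossible, since then $U_0U_1$ and $U_3U_0$ share a label and $U_1=U_0\symdiff L_0=U_3$. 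Hence $L_2=L_0$ and $L_3=L_1$: parallel sides carry equal labels. Consequently, for fixed $H\in\mc{H}^\ast$, each hyperplane with label $\l\{H,\lnot H\r\}$ injects — via $\l\{U,V\r\}\mapsto$ the endpoint containing $H$ — into $\mc{E}_H\coloneqq\l\{U\in\mc{U}^\circ(\mc{H}) : H\in U\textrm{ and }H\textrm{ minimal in }U\r\}$, since each $U\in\mc{E}_H$ has exactly one partner $U\symdiff\l\{H,\lnot H\r\}$, which is a clopen orientation because $H,\lnot H$ are isolated. So it suffices to show every $\mc{E}_H$ is finite.

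For that, set $N(H)\coloneqq\l\{K\in\mc{H} : K\textrm{ not nested with }H\r\}$, finite by Lemma \ref{lem:connected_cuts_non_nested_finitely_others}; I claim $U\mapsto U\cap N(H)$ is injective on $\mc{E}_H$, whence $|\mc{E}_H|\leq2^{|N(H)|}<\infty$. To see this, let $U,U'\in\mc{E}_H$ agree on $N(H)$ and take $K\in U$; by symmetry it suffices to show $K\in U'$. This is immediate if $K\in N(H)$, so assume $K$ is nested with $H$, i.e.\ one of $K\subseteq\lnot H$, $K\subseteq H$, $H\subseteq K$, $\lnot H\subseteq K$ holds. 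The first is impossible, since $\lnot K\supseteq H\in U$ would force $\lnot K\in U$ by upward-closure; in the second, minimality of $H$ in $U$ gives $K=H\in U'$; the third gives $K\in U'$ by upward-closure of $U'\ni H$; in the fourth, $\lnot K\subsetneq H$ (as $K\neq\lnot H$ because $H\in U$), so $\lnot K\notin U'$ by minimality of $H$ in $U'$, i.e.\ $K\in U'$.

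I expect the bookkeeping of the first two paragraphs — namely, extracting from Proposition \ref{prp:half-spaces_are_cones} that hyperplanes correspond to labels $\l\{H,\lnot H\r\}$, together with the square-labelling computation — to be the fussiest part, rather than the final estimate, which is a short orientation-chase once set up (the one subtlety there being that one uses ``$H$ minimal in $U\comp\l\{\lnot0\r\}$'' as ``$H$ minimal in $U$''). An alternative, more invariant route to the same reduction would be to verify directly, using the interval description $M\in[U,V]\Leftrightarrow U\cap V\subseteq M\subseteq U\cup V$ from the proof of Proposition \ref{prp:construction_of_dual_median_graph}, that $A_H\coloneqq\l\{U\in\mc{U}^\circ(\mc{H}) : H\in U\r\}$ is a non-trivial convex co-convex half-space of $\mc{M}(\mc{H})$ with $\del_\mathsf{iv}A_H=\mc{E}_H$ and $\del_\mathsf{ie}A_H$ the hyperplane dual to $H$; this replaces the square computation by a convexity check and makes the correspondence with pairs $\l\{H,\lnot H\r\}$ a bijection.
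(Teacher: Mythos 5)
Your proof is correct, and it takes a genuinely different route from the paper in the key finiteness step, while sharing the same setup. The paper fixes one edge $(U,V)\in\del_\mathsf{ie}K$ and argues by a walk along a chain of parallel-square flips: for a single square with parallel edges $(U,V)$ and $(U',V')$, it checks that the flipped half-space $H'$ is non-nested with $H$ and that $H$ stays minimal in $U'$, so inductively every edge of the hyperplane is obtained from $(U,V)$ by flipping a subset of the finite set $N(H)$ of half-spaces non-nested with $H$. You instead isolate the set $\mc{E}_H$ of clopen orientations in which $H$ is minimal, observe that hyperplane edges labelled $\l\{H,\lnot H\r\}$ inject into $\mc{E}_H$, and then prove directly, by a short case analysis on the four ways $K$ and $H$ can be nested, that $U\mapsto U\cap N(H)$ is injective on $\mc{E}_H$. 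The two conclusions are equivalent in content (both yield the bound $2^{|N(H)|}$): the paper says the complement of $N(H)$ never changes along the hyperplane, you say that agreement on $N(H)$ forces equality. But your proof avoids the inductive bookkeeping about minimality being preserved along the chain, replacing it with a pocset-theoretic order argument that never looks at squares once the labelling is established. The trade-off is that you must carry out the square-labelling computation up front, which the paper folds into its inductive step; your sketched alternative via the convex half-space $A_H\coloneqq\l\{U\in\mc{U}^\circ(\mc{H}):H\in U\r\}$ would avoid even that, and is closest in spirit to the Stone-type duality the paper alludes to in the footnote introducing dual median graphs.
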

    \begin{proof}
        Fix $K\in\mc{H}^\ast_\mathrm{cvx}(\mc{M}(\mc{H}))$, which by Proposition \ref{prp:half-spaces_are_cones} is of the form $K=\cone_V(U)$ for some (and hence any) $(U,V)\in\del_\mathsf{ie}K$, and we have by Proposition \ref{prp:construction_of_dual_median_graph} that $V=U\symdiff\l\{H,\lnot H\r\}$ for some (non-trivial) minimal $H\in U$. We claim that any other edge $(U',V')\in\del_\mathsf{ie}K$ can be reached from $(U,V)$ by simultaneously flipping only the half-spaces $H'_0,\dots,H'_n\in\mc{H}^\ast$ non-nested with $H$, of which there are finitely-many by Lemma \ref{lem:connected_cuts_non_nested_finitely_others}.

        Since the edges $(U,V),(U',V')$ induce the same hyperplane $\del_\mathsf{ie}K$, it suffices by Proposition \ref{prp:half-spaces_are_cones} to prove this for when $(U',V')$ is an edge of a square parallel to $(U,V)$, in which case there is some minimal $H'\in\mc{H}^\ast$ flipping both  $U$ to $U'$ and $V$ to $V'$. Note that $H,H'$ are non-nested by minimality, and $H\in U'$ is still minimal since $U'=U\symdiff\l\{H',\lnot H'\r\}$, and the only way it is not is if $\lnot H'\subseteq H$, which contradicts the minimality of $H$. Thus, $H$ induces the edge from $U'$ to $V'$ with $H'$ non-nested with $H$, as desired.
    \end{proof}

    \subsection{Canonical spanning trees}\label{sec:cycle_cutting_algorithm}

    We now present the Borel cycle-cutting algorithm that can be preformed on any countable median graph with finite hyperplanes. Applying this algorithm to the dual median graph of a finitely-separating family of cuts, which has finite hyperplanes by Proposition \ref{prp:dual_median_graph_of_cuts_has_finite_hyperplanes}, proves Theorem \ref{thm:component-wise_construction}.

    \begin{lemma}\label{lem:H_blocks_form_a_median_graph}
        For any subpocset $\mc{H}\subseteq\mc{H}_\mathrm{cvx}(X)$ of convex co-convex half-spaces in a median graph $(X,G)$, the principal orientations map $X\to\mc{U}^\circ(\mc{H})$ is surjective.
    \end{lemma}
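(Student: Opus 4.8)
The plan is to fix a clopen orientation $U \in \mc{U}^\circ(\mc{H})$ and realize it as $\widehat{x}$ for a suitable $x \in X$; concretely, I would minimize the discrepancy $|\widehat{o} \symdiff U|$ over all $o \in X$ and show that the minimum is $0$. This requires two facts: that $|\widehat{o} \symdiff U|$ is always finite, and that a nonzero value of this quantity can always be improved.

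For finiteness, the key point is that $\widehat{o} \comp U = \widehat{o} \cap (\mc{H} \comp U)$ is an intersection of two clopen subsets of $\mc{H}$: the principal orientation $\widehat{o}$ is clopen, being the trace on $\mc{H}$ of the clopen cylinder $\{A \in 2^X : o \in A\}$, and $\mc{H} \comp U$ is clopen since $U$ is. Hence $\widehat{o} \comp U$ is clopen, hence compact, and its elements are non-trivial half-spaces. Since $\mc{H} \subseteq \mc{H}_\mathrm{cvx}(X)$ it is finitely-separating (by Lemma \ref{lem:half_space_separating_convex}), so Lemma \ref{lem:finitely-separating_non-trivial_isolated} shows these non-trivial elements are isolated in $\mc{H}$. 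A compact set of isolated points is finite, so $\widehat{o} \comp U$ is finite; applying $\lnot$ puts it in bijection with $U \comp \widehat{o}$, so $|\widehat{o} \symdiff U| = 2\,|\widehat{o} \comp U| < \infty$.

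The heart of the argument is the improvement step: if $\widehat{o} \neq U$, I would produce $o' \in X$ with $|\widehat{o'} \symdiff U| = |\widehat{o} \symdiff U| - 2$. Choose $H$ that is $\subseteq$-minimal in the finite nonempty set $\widehat{o} \comp U$; it is then $\subseteq$-minimal in all of $\widehat{o}$, since any $K \in \widehat{o}$ with $K \subsetneq H$ would satisfy $\lnot H \subsetneq \lnot K$ with $\lnot H \in U$, hence $\lnot K \in U$ by upward-closure, so $K \in \widehat{o} \comp U$, contradicting minimality. Let $o' := \proj_{\lnot H}(o)$, which exists by Lemma \ref{lem:projections} because $\lnot H$ is nonempty and convex. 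If $K \in \mc{H}$ has $o \in K$ but $o' \notin K$, then $K$ must be disjoint from $\lnot H$: any $z \in K \cap \lnot H$ would force $o' = \proj_{\lnot H}(o) \in [o,z] \subseteq K$ by convexity of $K$, a contradiction. Hence $K \subseteq H$, so $K = H$ by minimality of $H$ in $\widehat{o}$. Therefore $\widehat{o} \comp \widehat{o'} = \{H\}$, and applying $\lnot$ gives $\widehat{o'} \comp \widehat{o} = \{\lnot H\}$; that is, $\widehat{o'} = \widehat{o} \symdiff \{H, \lnot H\}$. Since $H \in \widehat{o} \comp U$ and $\lnot H \in U \comp \widehat{o}$, both $H$ and $\lnot H$ lie in $\widehat{o} \symdiff U$ but neither lies in $\widehat{o'} \symdiff U$, so $|\widehat{o'} \symdiff U| = |\widehat{o} \symdiff U| - 2$.

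To finish, choose $o^\ast \in X$ with $|\widehat{o^\ast} \symdiff U|$ minimal (legitimate, since this is a finite natural number and $X \neq \emptyset$); the improvement step forces $\widehat{o^\ast} \symdiff U = \emptyset$, i.e. $U = \widehat{o^\ast}$, which is exactly surjectivity. I expect the third paragraph to be the main obstacle: the subtlety is that $H$ is only minimal \emph{within the subpocset} $\mc{H}$, not necessarily within $\mc{H}_\mathrm{cvx}(X)$, so one cannot appeal to $o$ being on the boundary of $H$; instead, the convexity of the half-spaces in $\mc{H}_\mathrm{cvx}(X)$ together with the betweenness property of the projection in Lemma \ref{lem:projections} is what guarantees that passing from $o$ to $\proj_{\lnot H}(o)$ flips exactly the pair $\{H,\lnot H\}$ and nothing else.
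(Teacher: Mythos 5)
Your proof is correct, but it takes a genuinely different route from the paper's. The paper gets the witness $x$ in one shot: clopenness of $U$ gives a finite set $A$ (which by Lemma \ref{lem:convex_of_finite_is_finite} may be taken convex) witnessing $U$, and then the Helly property (Lemma \ref{lem:helly}) of the pairwise-intersecting family $\{H\cap A : H\in U\}$ immediately furnishes a point $x\in\bigcap_{H\in U}H\cap A$, whence $U\subseteq\widehat{x}$ and equality follows since both are orientations. You instead run a descent argument: you prove $|\widehat{o}\symdiff U|$ is finite (via the same compactness-of-isolated-points reasoning the paper uses inside Proposition \ref{prp:construction_of_dual_median_graph}), and then show that projecting $o$ onto $\lnot H$, for $H$ a minimal element of $\widehat{o}\comp U$, flips exactly the pair $\{H,\lnot H\}$ and so strictly decreases the discrepancy. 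Both arguments are sound; the paper's is shorter and leans on Helly as the structural fact, while yours is more constructive and in effect exhibits a geodesic in $\mc{M}(\mc{H})$ from $\widehat{o}$ to $U$ realized entirely by principal orientations of vertices along successive projections. Your closing remark correctly identifies the one delicate point: $H$ need not be minimal in $\mc{H}_\mathrm{cvx}(X)$, so $\proj_{\lnot H}(o)$ may cross several hyperplanes of $X$ at once, and it is convexity of members of $\mc{H}$ together with the betweenness property of $\proj_{\lnot H}$ (Lemma \ref{lem:projections}) — not adjacency in $G$ — that guarantees no other half-space of $\mc{H}$ gets flipped.
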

    \begin{proof}
        Let $U\in\mc{U}^\circ(\mc{H})$, we need to find some $x\in X$ with $U=\widehat{x}$. Since $U\subseteq\mc{H}$ is clopen, there is a finite set $A\subseteq X$ $-$ which we may assume to be convex by Lemma \ref{lem:convex_of_finite_is_finite} $-$ such that for all $H\in\mc{H}$, we have $H\in U$ iff there is $H'\in U$ with $H\cap A=H'\cap A$. Note that $H\cap A\neq\em$ for every $H\in U$, since otherwise $\em\in U$. Furthermore, $H\cap H'\neq\em$ for every $H,H'\in U$, since otherwise we have $H\subseteq\lnot H'$, and so $\lnot H'\in U$.

        By Lemma \ref{lem:helly}, the intersection $(H\cap A)\cap(H'\cap A)=H\cap H'\cap A$ is non-empty, and applying it again furnishes some $x\in\bigcap_{H\in U}H\cap A$ in $X$. Thus $U\subseteq\widehat{x}$, so $U=\widehat{x}$ since both are orientations.
    \end{proof}

    This induces a \textit{$G$-adjacency} graph $X/\mc{H}\iso\mc{M}(\mc{H})$; explicitly, two $\mc{H}$-blocks $[x]_\mc{H},[y]_\mc{H}$ are $G$-adjacent if $(\widehat{x},\widehat{y})\in\mc{M}(\mc{H})$. Note that $\mc{M}(\mc{H})$ may be constructed as in Proposition \ref{prp:construction_of_dual_median_graph} since $\mc{H}\subseteq\mc{H}_\mathrm{cvx}(X)$ is finitely-separating by Lemma \ref{lem:half_space_separating_convex}. In particular, if $\mc{H}$ is nested, then $X/\mc{H}$ is a tree by Corollary \ref{cor:nested_implies_tree}.

    \begin{proposition}\label{prp:canonical_spanning_trees}
        If $(X,G)$ is a countable median graph with finite hyperplanes, then fixing any colouring of $\mc{H}_\mathrm{cvx}^\ast(X)$ into nested sub-pocsets yields a canonical spanning tree thereof.
    \end{proposition}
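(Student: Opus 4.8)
The plan is to use the colouring to decompose $\mc{H}_\mathrm{cvx}^\ast(X)$ into finitely or countably many nested sub-pocsets $\mc{H}_0, \mc{H}_1, \dots$ (indexed by colours), and then remove cycles colour-by-colour in a canonical way. The underlying principle is that each hyperplane class of $G$ corresponds to a half-space (by Proposition~\ref{prp:half-spaces_are_cones}), two half-spaces of the \emph{same colour} are nested, and removing all edges dual to a nested family collapses the graph onto the tree $X/\mc{H}_i$ of that family (by Corollary~\ref{cor:nested_implies_tree} and the $G$-adjacency discussion); the subtlety is to do this iteratively across colours while preserving connectedness and ending at a tree. Concretely, I would build an increasing union of nested sub-pocsets: set $\mc{K}_0 \coloneqq \mc{H}_0$, and having built a nested $\mc{K}_n$, let $\mc{K}_{n+1}$ be the union of $\mc{K}_n$ with those half-spaces of colour $n+1$ that are nested with everything already in $\mc{K}_n$; keep those that are \emph{not} for a later stage. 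The issue is that this naive version may never exhaust all hyperplanes.

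A cleaner route, which I expect to be the intended one, is to cut cycles directly. Every cycle in $G$ must cross some hyperplane an even number of times (at least twice), and among all hyperplanes crossed by a given cycle, finiteness of hyperplanes together with the colouring lets us pick a canonical ``offending'' edge to delete. Specifically, I would define a spanning subgraph $T$ of $G$ by declaring an edge $e = (x,y)$ with dual half-space $H = \cone_x(y)$ to be \emph{kept} if and only if, among all edges on the finite hyperplane $\del_\mathsf{ie}H$, the edge $e$ is the \emph{minimal} one in some fixed canonical well-ordering of that hyperplane's edge set — where the well-ordering is itself determined by the colour of $H$ plus the ambient countable labelling of $X$. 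Since hyperplanes partition the edge set of $G$ (Proposition~\ref{prp:half-spaces_are_cones}), this picks exactly one edge per hyperplane, so $T$ is obtained from $G$ by deleting, within each hyperplane, all but one edge.

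Then the two things to verify are: (1) $T$ is connected, and (2) $T$ is acyclic. For connectedness, between any $x,y$ take a geodesic; it crosses each of the finitely many separating hyperplanes exactly once (Lemma~\ref{lem:half_space_separating_convex}), so every edge of that geodesic is the \emph{unique} edge of its hyperplane that the geodesic uses — but that is not enough, since the kept edge of a hyperplane need not lie on this geodesic. Here is where the colour decomposition earns its keep: within a single colour class $\mc{H}_i$, the half-spaces are nested, $X/\mc{H}_i$ is a tree, and ``keeping one edge per hyperplane of colour $i$'' amounts to choosing a section of the quotient $X \onto X/\mc{H}_i$ over each edge of that tree; doing this consistently (e.g., always picking, for the hyperplane $\del_\mathsf{ie}H$, the edge incident to the base-point-closest vertex of $\del_\mathsf{ov}H$) yields a connected spanning subgraph that realises $X/\mc{H}_i$. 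Iterating over colours — at stage $i$, work inside each ``slab'' between consecutive hyperplanes of colours $<i$ and apply the same recipe to the induced colour-$i$ hyperplanes — gives a nested chain of connected spanning subgraphs whose intersection is $T$; connectedness passes to the limit because each slab is finite between any two points (finitely many separating hyperplanes). For acyclicity, a cycle in $T$ would cross some hyperplane, hence use $\geq 2$ of its edges, contradicting that $T$ uses exactly one edge per hyperplane.

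The main obstacle will be making the ``iterate over colours'' step both \emph{canonical} (no arbitrary choices beyond the fixed colouring and the fixed countable labelling of $X$, so that the construction is Borel and equivariant) and \emph{correct} — i.e.\ proving that the intersection over all colours is genuinely a spanning tree and not, say, a disconnected forest. I would handle this by phrasing each stage as a canonical median-homomorphic retraction $r_i : X \onto X$ onto a colour-$i$-transversal, show each $r_i$ decreases the edge set, fixes all previously-resolved hyperplanes, and that $\bigcap_i r_i(X)$-adjacency graph is exactly $T$; the finiteness of hyperplanes (Proposition~\ref{prp:dual_median_graph_of_cuts_has_finite_hyperplanes} in the application, but only finiteness is needed here) guarantees each retraction moves points a bounded amount within each slab, so the process stabilises locally and the limit is well-defined and Borel in the parameter. $\blacksquare$
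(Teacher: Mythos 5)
Your central construction --- keep exactly one edge per hyperplane, namely the canonically minimal one --- produces far too few edges to span $X$. Proposition~\ref{prp:half-spaces_are_cones} does say that hyperplanes partition the edges, but a spanning tree of $X$ needs $|X|-1$ edges while the number of hyperplanes can be dramatically smaller: the Hamming cube $\{0,1\}^n$ has $n$ hyperplanes and $2^n$ vertices, so one edge per hyperplane leaves an $n$-edge graph that is nowhere close to spanning. You do sense trouble when you note that ``the kept edge of a hyperplane need not lie on this geodesic,'' but the fix is not to choose the single kept edge more cleverly; one must keep \emph{several} edges from each hyperplane, one for each ``slab'' that the hyperplane cuts. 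Your final paragraph's slab iteration gestures at this, but it silently contradicts the earlier ``exactly one edge per hyperplane'' claim, and with it your acyclicity argument: once a hyperplane may contribute multiple surviving edges, a cycle using $\geq2$ edges of some hyperplane is no longer an automatic contradiction. The decreasing-intersection framing also leaves connectedness genuinely open --- you assert it ``passes to the limit'' because each pair of points is separated by finitely many hyperplanes, but a decreasing intersection of connected spanning subgraphs need not be connected, and the bound on separating hyperplanes controls the distance in $X$, not the length or stability of paths in the intermediate subgraphs.

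The paper avoids all of this by running the induction in the opposite direction. Set $\mc{K}_n\coloneqq\bigcup_{m\geq n}\mc{H}_m$ and build an \emph{increasing} chain of forests $T_0\subseteq T_1\subseteq\cdots$ with the precise invariant that the components of $T_n$ are exactly the $\mc{K}_n$-blocks, starting from $T_0=\em$ (the $\mc{K}_0$-blocks are singletons since every pair of points is separated). To pass from $T_n$ to $T_{n+1}$, fix a $\mc{K}_{n+1}$-block $Y$: the colour-$n$ hyperplanes cut $Y$ into $Y/\mc{H}_n$, which is a tree by Corollary~\ref{cor:nested_implies_tree}, and for each relevant hyperplane one picks a single edge from its \emph{finite} intersection with $Y$ joining the two adjacent sub-blocks. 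Since those sub-blocks are already spanned by trees (the inductive hypothesis) and $Y/\mc{H}_n$ is a tree, $T_{n+1}\cap(Y\times Y)$ is a tree on $Y$, giving acyclicity and connectedness simultaneously with no limit argument. Note in particular that a single colour-$n$ hyperplane contributes one edge per $\mc{K}_{n+1}$-block it meets --- precisely what your one-edge-per-hyperplane prescription forbids, and precisely what is needed for the edge count to come out right.
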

    \begin{proof}
        Such a colouring exists, since, by Corollary \ref{cor:non-nested_iff_embedding_of_hamming}, if two half-spaces $H,K\in\mc{H}^\ast_\mathrm{cvx}(X)$ are non-nested, then $\del_\mathsf{v}H\cap\del_\mathsf{v}K\neq\em$. Thus, the intersection graph of the boundaries admits a countable colouring, which descends into a colouring $\mc{H}^\ast_\mathrm{cvx}(X)=\bigsqcup_{n\in\N}\mc{H}^\ast_n$ such that each $H,\lnot H$ receive the same colour and that each $\mc{H}_n\coloneqq\mc{H}_n^\ast\cup\l\{\em,X\r\}$ is a \textit{nested} subpocset. For each $n\in\N$, let $\mc{K}_n\coloneqq\bigcup_{m\geq n}\mc{H}_m$.

        We shall inductively construct an increasing chain of subforests $T_n\subseteq G$ such that the components of $T_n$ are exactly the $\mc{K}_n$-blocks. Then, the increasing union $T\coloneqq\bigcup_nT_n$ is a spanning tree, since each $(x,y)\in G$ lies in a $\mc{K}_n$-block for sufficiently large $n$ (namely, the $n$ such that $\cone_x(y)\in\mc{H}^\ast_{n-1}$, since $\cone_x(y)$ and its complement are the only half-spaces separating $x$ and $y$ by Proposition \ref{prp:half-spaces_are_cones}).

        Since each pair of distinct points is separated by a half-space, the $\mc{K}_0$-blocks are singletons, so put $T_0\coloneqq\em$. Suppose that a forest $T_n$ is constructed as required. Note that each $\mc{K}_{n+1}$-block $Y\in X/\mc{K}_{n+1}$ is not separated by any half-spaces in $\mc{H}_m$ for $m>n$, but is separated by $\mc{H}_n$ into the $\mc{K}_n$-blocks contained in $Y$, which are precisely the $\mc{H}_n$-blocks in $Y/\mc{H}_n$. Pick an edge from the \textit{finite} hyperplane $\del_\mathsf{ie}H$ for each $H\in\mc{H}_n$, which connects a unique pair of $G$-adjacent blocks in $Y/\mc{H}_n$. Since each $Y/\mc{H}_n$ is a tree by Corollary \ref{cor:nested_implies_tree}, and each pair of $G$-adjacent blocks in $Y/\mc{H}_n$ is connected by a single picked edge, the graph $T_{n+1}$ obtained from $T_n$ by adding all such edges is a forest whose components are exactly the $\mc{K}_{n+1}$-blocks.
    \end{proof}

    \section{Borel Treeings of Graphings with Dense Cuts}\label{sec:borel_treeings_of_graphings_with_dense_cuts}

    We finally prove Theorem \ref{thm:treeing_quasi-trees}, stating that if a CBER $(X,E)$ admits a locally-finite graphing $G$ such that each component is a quasi-tree, then $E$ is treeable. The first step is to identify, for each component $G|C$, a family $\mc{H}(C)$ of finitely-separating cuts that is dense towards ends of $G|C$; since each $G|C$ is a quasi-tree, the cuts $\mc{H}(C)\coloneqq\mc{H}_{\diam(\del)\leq R_C}(C)\cap\mc{H}_\mathrm{conn}(C)$ for some $R_C<\infty$ from Section \ref{sec:graphs_with_dense_families_of_cuts} will do. Applying Theorem \ref{thm:component-wise_construction} then gives us, for each component $G|C$, a median graph $\mc{M}(\mc{H}(C))$ on $\mc{U}^\circ(\mc{H}(C))$ with finite hyperplanes.

    The issue lies in making the family $\mc{U}^\circ(\mc{H})\coloneqq\bigsqcup_C\mc{U}^\circ(\mc{H}(C))$ of \textit{all} clopen orientations on $\mc{H}\coloneqq\bigsqcup_C\mc{H}(C)$ into a standard Borel space. \textit{If $\mc{U}^\circ(\mc{H})$ is standard Borel}, the above partition induces a CBER $\mc{E}$ admitting a median graphing $\mc{M}(\mc{H})\coloneqq\bigsqcup_C\mc{M}(\mc{H}(C))$ with finite hyperplanes, from which one can implement the proof of Proposition \ref{prp:canonical_spanning_trees} in a Borel manner (using \cite{KM04}*{Lemma 7.3} for a countable colouring of the intersection graph of finite hyperplanes therein) to obtain a treeing of $\mc{E}$. Finally, $E$ is Borel bireducible with $\mc{E}$ via the principal orientations map $X\ni x\mapsto\widehat{x}\in\mc{U}^\circ(\mc{H})$, so $E$ is also treeable by \cite{JKL02}*{Proposition 3.3 (ii)}.

    We will remedy this issue using the fact that the cuts $\mc{H}(C)$ are dense towards ends of $G|C$. In particular, we have the following crucial lemma, which, by Proposition \ref{prp:construction_of_dual_median_graph}, shows that $\mc{M}(\mc{H}(C))$ is locally-finite.

    \begin{lemma}\label{lem:dual_median_graph_of_dense_cuts_locally_finite}
        Let $\mc{H}$ be a finitely-separating pocset of cuts on a connected locally-finite graph $(X,G)$. If $\mc{H}$ is dense towards ends, then each clopen orientation $U\in\mc{U}^\circ(\mc{H})$ contains finitely-many minimal cuts $H\in\mc{H}$.
    \end{lemma}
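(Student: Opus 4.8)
The plan is to argue by contradiction: suppose $U \in \mc{U}^\circ(\mc{H})$ has infinitely many minimal cuts. Each minimal $H \in U$ has $\lnot H \notin U$, and since $U$ is upward-closed, $\lnot H$ is \emph{maximal} among elements of $\mc{H} \comp U$. So equivalently $\mc{H} \comp U$ has infinitely many maximal elements $\lnot H_0, \lnot H_1, \dots$; write $K_n \coloneqq \lnot H_n$, so the $K_n$ are pairwise distinct maximal cuts not in $U$, hence $H_n \in U$ for all $n$. Because $U$ is a clopen orientation, the principal orientations picture (and Lemma \ref{lem:H_blocks_form_a_median_graph} applied in the ambient graph, or rather its analogue for $X$ itself — more carefully, one uses that $U$ being \emph{clopen} means it is determined by finitely much data) will let us locate a point or a bounded region of $X$ that $U$ "looks like" from far away; the real content is to show the $K_n$ must accumulate somewhere forbidden.

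First I would extract geometric witnesses. For each $n$, since $K_n \notin U$ and $H_n = \lnot K_n \in U$, and since $\mc{H}$ consists of cuts (connected, co-connected, finite boundary), I want to find vertices recording the "location" of $\del_\mathsf{v} K_n$. The key maneuver: use that $U$ is clopen to fix a finite $A \subseteq X$ such that membership in $U$ of any cut is decided by its trace on $A$ — concretely, there is finite $A$ with: for all $H \in \mc{H}$, $H \in U$ iff $A \cap H \neq \em$ is "consistent with $U$", and in particular $A \subseteq \bigcup\{H : H \in U\}$ meets every $H \in U$ while being disjoint from... — here I must be careful, since $U$ need not be principal in $X$ (it may correspond to an end). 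The cleaner route is: since each $H_n \in U$ is minimal, and $U$ is clopen hence (as in the proof of Proposition \ref{prp:construction_of_dual_median_graph}, using that $\mc{M}(\mc{H})$-components are the finite $U \symdiff V$ classes) only finitely "far" from any principal orientation, pick $x \in X$; then $\widehat{x} \symdiff U$ is finite, so all but finitely many $H_n$ satisfy $H_n \in \widehat x$, i.e. $x \in H_n$, and all but finitely many $K_n$ satisfy $x \notin K_n$. Discarding finitely many, assume $x \in H_n$ and $x \notin K_n$ for all $n$.

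Now here is the crux. The sets $K_n$ are infinitely many distinct cuts with $x \notin K_n$. I claim they cannot all avoid a common ball. If $\{K_n\}$ were confined to a bounded region — say all $\del_\mathsf{v} K_n \subseteq \Ball_r(x)$ — then by local-finiteness $\Ball_r(x)$ is finite, and by Lemma \ref{lem:finitely_separating_iff_on_boundary_of_finite} (finite-separation $\Leftrightarrow$ each vertex on the boundary of finitely many half-spaces) only finitely many cuts have boundary inside $\Ball_r(x)$, a contradiction. So (passing to a subsequence) the $K_n$ escape to infinity: $d(x, \del_\mathsf{v} K_n) \to \infty$. Since each $K_n$ is infinite (it contains points arbitrarily far from $x$: it has unbounded boundary-distance and a point, and $K_n$ connected... — more directly, a cut with boundary far from $x$ and with $x \notin K_n$ must be infinite because $\lnot K_n = H_n \ni x$ is co-finite-boundary and the finite side cannot be $K_n$ unless $K_n$ is small, which is excluded once $\del_\mathsf{v} K_n$ is large by the ball argument again on $K_n \cup \del_\mathsf{v}K_n$), Lemma \ref{lem:infinite_iff_contains_end} gives an end $p_n \in K_n$. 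By compactness of $\epsilon(X)$, pass to a subsequence with $p_n \to p \in \epsilon(X)$.

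Finally, I invoke density. Since $\mc{H}$ is dense towards ends, there is $L \in \mc{H}$ with $p \in L$ and $L$ small — say $\del_\mathsf{v} L \subseteq \Ball_s(x)$ for appropriate $s$, or more precisely $L \subseteq A$ for a fixed finite-boundary neighborhood $A \ni p$ with $\del_\mathsf{v}A \subseteq \Ball_s(x)$. Then $p_n \in L$ for all large $n$, so $K_n$ and $L$ both contain the end $p_n$, forcing $K_n \cap L \neq \em$ (they share $p_n \in \widehat{K_n} \cap \widehat{L}$). But $\del_\mathsf{v} K_n$ has left the ball $\Ball_s(x) \supseteq \del_\mathsf{v} L$, so $\del_\mathsf{v} K_n \cap \del_\mathsf{v} L = \em$; combined with $K_n \cap L \ne \em$ this will force (since cuts are connected and co-connected, and $L$ lies on one side of $\del_\mathsf{v} K_n$) either $L \subseteq K_n$ or $K_n \subseteq L$ — i.e. $K_n, L$ are nested with $L \subseteq K_n$ or $K_n \subseteq L$. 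If $K_n \subseteq L$ then $K_n$ is a cut contained in the fixed finite-boundary set $L$ whose boundary we forced outside $\Ball_s$, contradiction with $L \subseteq \Ball_s$-ish containment (finitely many such); if $L \subseteq K_n$ for infinitely many $n$, then $\lnot K_n = H_n \subseteq \lnot L$ for infinitely many distinct $H_n \in U$ — but then all these $H_n$ lie below the single fixed half-space $\lnot L$, and since the $H_n$ are \emph{minimal} and distinct they are pairwise incomparable, yet I can now run the finite-separation bound inside $\lnot L$ to cap their number. The main obstacle I anticipate is making the last dichotomy airtight — pinning down exactly which containment holds and extracting the final contradiction uniformly — which hinges on carefully using connectivity/co-connectivity of cuts together with the disjointness of boundaries; the rest is bookkeeping with local-finiteness, Lemma \ref{lem:infinite_iff_contains_end}, and compactness of $\epsilon(X)$.
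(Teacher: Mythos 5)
Your high-level idea (use compactness of the end space and density of $\mc{H}$ to constrain the minimal elements of $U$) is in the same spirit as the paper's proof, but your sequential implementation has several genuine gaps, and the point at which you flag uncertainty is indeed where the argument breaks.

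First, the claim that each $K_n=\lnot H_n$ is infinite is not justified, and the parenthetical attempt to argue it (``the finite side cannot be $K_n$ unless $K_n$ is small, which is excluded once $\del_\mathsf{v}K_n$ is large'') conflates ``far from $x$'' with ``large'': a cut $K_n$ whose boundary has escaped $\Ball_r(x)$ can perfectly well be a small finite set sitting far from $x$. In fact the paper's proof shows that for all but finitely many minimal $H\in U$, the complement $\lnot H$ is \emph{finite} (it lands inside a fixed finite Boolean combination $\bigcap_{i<n}K_i$), so the premise you want to extract is the opposite of what actually holds. (You could route around this by taking a vertex $y_n\in K_n$ and letting it accumulate in $\widehat{X}$ rather than insisting on an end $p_n\in K_n$, but as written the step fails.)

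Second, the dichotomy ``$K_n\cap L\neq\em$ and $\del_\mathsf{v}K_n\cap\del_\mathsf{v}L=\em$ force $L\subseteq K_n$ or $K_n\subseteq L$'' is wrong on two counts. Disjointness of boundaries together with a nonempty intersection does not by itself yield nestedness for cuts; to get nestedness you should instead invoke Lemma \ref{lem:connected_cuts_non_nested_finitely_others} (each $L\in\mc{H}$ is non-nested with only finitely many half-spaces) and discard finitely many $K_n$. And even granting nestedness, you have omitted the fourth nesting option $\lnot K_n\cap\lnot L=\em$, i.e.\ $H_n\subseteq L$, which is not ruled out unless you additionally arrange $x\notin L$ (by choosing the neighborhood $A\ni p$ inside $\lnot\Ball_r(x)$ for $r$ large). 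Finally, in the surviving cases the contradiction is not actually extracted: ``$K_n\subseteq L$ gives finitely many such'' relies on $\del_\mathsf{v}L\subseteq\Ball_s(x)$, which density does \emph{not} give you (you only control $\del_\mathsf{v}A$, not $\del_\mathsf{v}L$ for $L\subseteq A$), and ``run the finite-separation bound inside $\lnot L$'' in the other case is not an argument --- there is no bound on the number of pairwise-incomparable cuts contained in a fixed co-connected set.

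For comparison, the paper covers \emph{all} of $\epsilon(X)$ at once: density plus clopenness of $U$ show that for every end $p$ there is a minimal $K_p\in U$ with $p\in\lnot K_p$, compactness extracts a finite subcover $\{\lnot K_i\}_{i<n}$, and then for any other minimal $K$ nested with all $K_i$ one gets $\lnot K\subseteq\bigcap_{i<n}K_i$, a set of finite boundary with no ends, hence finite by Lemma \ref{lem:infinite_iff_contains_end}. That single Boolean-combination step replaces your entire sequence/accumulation/case-analysis machinery and sidesteps all three problems above.
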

    \begin{proof}
        Fix a vertex $U\in\mc{U}^\circ(\mc{H})$ and let $\mc{K}\subseteq U$ be the minimal elements in $U$. Since $U\subseteq\mc{H}$ is clopen, there is a finite set $A\subseteq X$ such that for all $H\in\mc{H}$, we have $H\in U$ iff there is $H'\in U$ with $H\cap A=H'\cap A$. Note that $H\cap A\neq\em$ for every $H\in U$, for otherwise $\em\in U$; in particular, we have $\lnot H\in U$ for every $H\subseteq\lnot A$.

        Each end $p\in\epsilon(X)$ lies in $\lnot A$, so density of $\mc{H}$ furnishes $H_p\in\mc{H}$ with $p\in H_p\subseteq\lnot A$, and thus $\lnot H_p\in U$. Since $U$ is clopen, we have $K_p\subseteq\lnot H_p$ for some $K_p\in\mc{K}$. Thus $\l\{\lnot K_p\r\}$ covers $\epsilon(X)$, which by compactness contains a finite subcover $\l\{\lnot K_i\r\}_{i<n}$. We show that there are at-most finitely-many more minimal $K\in U$.

        Let $K\in\mc{K}\comp\l\{K_i\r\}_{i<n}$ be any other minimal element in $U$. By Lemma \ref{lem:connected_cuts_non_nested_finitely_others}, each $K_i$ is non-nested with finitely-many other half-spaces, so we may assume without loss of generality that $K$ is nested with every $K_i$. But $K\not\subseteq K_i\not\subseteq K$ and $K\cap K_i\neq\em$ for all $i<n$, so $\lnot K\subseteq\bigcap_{i<n}K_i\in\mc{H}_{\del<\infty}(X)$; the latter contains no ends in $\epsilon(X)$, so it is finite by Lemma \ref{lem:infinite_iff_contains_end}, and hence $\mc{K}$ is finite too.
    \end{proof}

    We now describe the encoding of $\mc{U}^\circ(\mc{H})$ into a standard Borel space. Since cuts have finite edge boundary, we may first represent the space $\mc{K}$ of \textit{all} non-trivial cuts of $G$ as a Borel subset of $[G]^{<\infty}$. The subcollection $\mc{H}\subseteq\mc{K}$ consisting of those cuts with component-wise bounded boundary diameter is also Borel, since each $R_C<\infty$ can be witnessed as the minimal number making $\mc{H}_{\diam(\del)\leq R_C}(C)$ dense towards ends of $G|C$, and the latter is a Borel condition as characterized in Proposition \ref{prp:dense_cuts_iff_proper_walling}. Finally, $\mc{U}^\circ(\mc{H})$ is a Borel subset of $[\mc{H}]^{<\infty}$, since we may encode each clopen orientation $U\in\mc{U}^\circ(\mc{H}(C))$ by its set of minimal elements in $\mc{H}(C)$, which is finite by Lemma \ref{lem:dual_median_graph_of_dense_cuts_locally_finite}. This makes $\mc{U}^\circ(\mc{H})$ a standard Borel space, and finishes the proof of Theorem \ref{thm:treeing_quasi-trees}.

    The above discussion actually proves the following generalization of Theorem \ref{thm:treeing_quasi-trees}, which is no longer about quasi-trees; rather, we only require that the (locally-finite) graphing admits a Borel family of `tree-like' cuts.

    \begin{theorem}\label{thm:treeing_graphings_with_dense_cuts}
        If a CBER $(X,E)$ admits a locally-finite graphing $G$ such that each component $G|C$ admits a family $\mc{H}(C)$ of finitely-separating cuts that is dense towards ends of $G|C$, and if $\mc{H}\coloneqq\bigsqcup_C\mc{H}(C)$ is a Borel subset of the standard Borel space of all cuts of $G$, then $E$ is treeable.
    \end{theorem}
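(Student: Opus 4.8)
The plan is to run the argument sketched in the paragraphs preceding the statement, now unburdened of the quasi-tree hypothesis since the family $\mc{H}(C)$ is handed to us. Fix, for each component $C$, the finitely-separating pocset of cuts $\mc{H}(C)$ dense towards ends of $G|C$; by Lemma~\ref{lem:finitely-separating_non-trivial_isolated} each $\mc{H}(C)$ is closed in $2^C$ with non-trivial elements isolated, so Proposition~\ref{prp:construction_of_dual_median_graph} produces its dual median graph $\mc{M}(\mc{H}(C))$ on the clopen orientations $\mc{U}^\circ(\mc{H}(C))$, which has finite hyperplanes by Theorem~\ref{thm:component-wise_construction} and is locally finite by Lemma~\ref{lem:dual_median_graph_of_dense_cuts_locally_finite}; being connected and locally finite it is countable. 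Write $\mc{U}^\circ(\mc{H}):=\bigsqcup_C\mc{U}^\circ(\mc{H}(C))$ and $\mc{M}(\mc{H}):=\bigsqcup_C\mc{M}(\mc{H}(C))$, and let $\mc{E}$ be the equivalence relation on $\mc{U}^\circ(\mc{H})$ whose classes are the $\mc{U}^\circ(\mc{H}(C))$; then $\mc{E}$ has countable classes, and $\mc{M}(\mc{H})$ is a locally-finite graphing of it with finite hyperplanes.

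The heart of the matter is to put a standard Borel structure on $\mc{U}^\circ(\mc{H})$ compatibly, and this is the only step that uses the density hypothesis beyond Theorem~\ref{thm:component-wise_construction}. Since every cut has finite edge boundary, $H\mapsto\del_\mathsf{ie}H$ realises the collection $\mc{K}$ of all non-trivial cuts of $G$ as a Borel subset of $[G]^{<\infty}$, and $\mc{H}\subseteq\mc{K}$ is Borel by assumption. By Lemma~\ref{lem:dual_median_graph_of_dense_cuts_locally_finite} each clopen orientation $U\in\mc{U}^\circ(\mc{H}(C))$ has only finitely-many minimal elements, and since the set of elements of $U$ below a fixed non-trivial $H\in U$ is closed in $\mc{H}(C)$, hence compact and discrete, hence finite, each such $U$ is the upward closure of its finite set $\min(U)$ of minimal elements. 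Thus $U\mapsto\min(U)$ injects $\mc{U}^\circ(\mc{H})$ into $[\mc{H}]^{<\infty}$, with Borel image (the conditions that $\min(U)$ be an antichain contained in a single $\mc{H}(C)$ whose upward closure orients $\mc{H}(C)$ are Borel). This makes $\mc{U}^\circ(\mc{H})$ standard Borel, $\mc{E}$ a CBER, and $\mc{M}(\mc{H})$ a Borel graphing of $\mc{E}$ with finite hyperplanes.

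It remains to Borelify Proposition~\ref{prp:canonical_spanning_trees} and to transfer back to $E$. The hyperplanes of $\mc{M}(\mc{H})$ form a standard Borel set on which the non-nestedness relation is Borel and, by Lemma~\ref{lem:connected_cuts_non_nested_finitely_others}, locally finite, so \cite{KM04}*{Lemma~7.3} furnishes a Borel colouring $\mc{H}^\ast_\mathrm{cvx}(\mc{M}(\mc{H}))=\bigsqcup_n\mc{H}^\ast_n$ into nested subpocsets; with it fixed, the inductive construction of the forests $T_n$ from Proposition~\ref{prp:canonical_spanning_trees} — choosing, uniformly in the finite hyperplane, one of its edges — is Borel, so $T:=\bigcup_nT_n$ is a Borel treeing of $\mc{E}$. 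Finally, the principal orientations map $x\mapsto\widehat{x}$ is a Borel reduction of $E$ to $\mc{E}$ whose fibres are the $\mc{H}$-blocks, which are finite by Proposition~\ref{prp:dense_cuts_iff_proper_walling}(i), and whose image meets every $\mc{E}$-class; a Borel section of this finite-to-one map (Lusin--Novikov) reduces $\mc{E}$ back to $E$, so $E\sim_B\mc{E}$ and $E$ is treeable by \cite{JKL02}*{Proposition~3.3(ii)}. The main obstacle is the second paragraph: all of the combinatorics is already in place, so the real content of the theorem is that density towards ends forces clopen orientations to be finitely generated, which is precisely what makes coding $\mc{U}^\circ(\mc{H})$ as a standard Borel space possible; Steps 3 and 4 are routine Borelifications of arguments already given.
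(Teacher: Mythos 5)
Your proposal is correct and follows essentially the same route as the paper: invoke Theorem~\ref{thm:component-wise_construction} component-wise, use Lemma~\ref{lem:dual_median_graph_of_dense_cuts_locally_finite} to code clopen orientations by their finite sets of minimal elements and hence realize $\mc{U}^\circ(\mc{H})$ as a Borel subset of $[\mc{H}]^{<\infty}$, Borelify Proposition~\ref{prp:canonical_spanning_trees} via \cite{KM04}*{Lemma~7.3}, and transfer back to $E$ through the principal orientations map. The small elaborations you add (that $U=\uparrow\!\min(U)$ because lower cones in a clopen orientation are compact and discrete, and the Lusin--Novikov justification of bireducibility) are glossed over in the paper but are accurate and fill in exactly the right spots.
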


    \begin{bibdiv}
        \begin{biblist}*{labels={alphabetic}}
            \bibselect{setup/bibliography}
        \end{biblist}
    \end{bibdiv}
\end{document}